\DeclareFontFamily{U}{rsf}{}
\DeclareFontShape{U}{rsf}{m}{n}{
  <5> <6> rsfs5 <7> <8> <9> rsfs7 <10->  rsfs10}{}
\DeclareMathAlphabet{\mathscr}{U}{rsf}{m}{n}
\newtheorem{theorem}{Theorem}[section]
\newtheorem{lem}[theorem]{Lemma}
\newtheorem{prop}[theorem]{Proposition}
\theoremstyle{definition}
\newtheorem{defn}[theorem]{Definition}
\newtheorem{ex}[theorem]{Example}
\newtheorem{claim}[theorem]{Claim}
\theoremstyle{remark}
\newtheorem{rem}[theorem]{Remark}
\numberwithin{equation}{section}
\newcommand{\mb}{\mathbb}
\newcommand{\s}{\sigma}
\newcommand{\ms}{\mathscr}
\newcommand{\mc}{\mathcal}
\newcommand{\mm}{\mathrm}
\newcommand{\nn}{\nonumber}
\newcommand{\tb}{\textbf}
\newcommand{\St}{\tilde{\mb{S}}[t]}
\newcommand\blfootnote[1]{%
  \begingroup
  \renewcommand\thefootnote{}\footnote{#1}%
  \addtocounter{footnote}{-1}%
  \endgroup
}
\def\mydate{\ifcase\month \or January\or February\or March\or
April\or May\or June\or July\or August\or September\or October\or
November\or December\fi \space\number\day,\space\number\year}
\newlength{\picwidth} \setlength{\picwidth}{.75\textwidth}
\newlength{\miniwidth} \setlength{\miniwidth}{.5\textwidth}
\newlength{\nanowidth} \setlength{\nanowidth}{.33\textwidth}
\newlength{\melowidth} \setlength{\melowidth}{.88\textwidth}
\newlength{\leftminiwidth} \setlength{\leftminiwidth}{.45\textwidth}
\newlength{\rightminiwidth} \setlength{\rightminiwidth}{.45\textwidth}
\newlength{\minipagewidth} \setlength{\minipagewidth}{.45\textwidth}
\begin{document}
\def\mapright#1{\smash{
 \mathop{\longrightarrow}\limits^{#1}}}
\def\mapleft#1{\smash{
 \mathop{\longleftarrow}\limits^{#1}}}
\def\exact#1#2#3{0\to#1\to#2\to#3\to0}
\def\mapup#1{\Big\uparrow
  \rlap{$\vcenter{\hbox{$\scriptstyle#1$}}$}}
\def\mapdown#1{\Big\downarrow
  \rlap{$\vcenter{\hbox{$\scriptstyle#1$}}$}}
\def\dual#1{{#1}^{\scriptscriptstyle \vee}}
\def\invlim{\mathop{\rm lim}\limits_{\longleftarrow}}
\def\rto{\raise.5ex\hbox{$\scriptscriptstyle ---\!\!\!>$}}

\input epsf.tex
\title
[The quantum $A_{\infty}$-relations on the elliptic curve]
{The quantum $A_{\infty}$-relations on the elliptic curve}
\author{Michael Slawinski}
\address{UCSD Mathematics, 9500 Gilman Drive, La Jolla, CA 92093-0112, USA}
\email{slawinski.michael@gmail.com}


\maketitle
\tableofcontents
\bigskip

\blfootnote{Warm thanks to Mark Gross for conceiving this project and for his prudent guidance throughout.  This research was partly supported by NSF grant DMS-0805328.}


\begin{abstract}
 We define and prove the existence of the Quantum $A_{\infty}$-relations on the Fukaya category of the elliptic curve, using the notion of the Feynman transform of a modular operad, as defined by Getzler and Kapranov.  Following Barannikov, these relations may be viewed as defining a solution to the quantum master equation of Batalin-Vilkovisky geometry.
\end{abstract}

\section*{Introduction}

Getzler and Kapranov introduced the notion of a modular operad in \cite{Mod}.  Following Barannikov's work in \cite{Bar}, we endow the Fukaya category of the elliptic curve with the structure of an algebra over the Feynman transform of a twisted modular operad.  Leveraging this structure, we present an explicit formulation of what is called the \emph{Quantum $A_{\infty}$-relations} on the elliptic curve.  This set of relations is a generalization of the well-known $A_{\infty}$-relations and defines the solutions to the quantum master equation of Batalin-Vilkovisky geometry on affine $P$-manifolds, where a $P$-manifold is a manifold which is odd symplectic.  The categories on which these modular operads are defined are categories of stable graphs, indexed by the number of legs and the dimension of the first homology group of the graphs in the given category.  The version of the Fukaya category used in this work is given combinatorially by a simple generalization of tropical Morse trees (chapter 8 of \cite{Clay}), called tropical Morse graphs, as defined in \cite{S}.

Barannikov, in \cite{Bar}, uses the Feynman transform of Getzler and Kapranov \cite{Mod} to claim the existence of a combinatorial description of Gromov-Witten invariants, in terms of the periodic cyclic homology of the given twisted modular operads via the characteristic class map.  He further suggests the possibility of constructing a Feynman transform-algebra structure on the Lagrangians of a symplectic manifold.  

The result of this paper is the construction of an explicit example of such a structure, i.e., the construction of a morphism from the Feynman transform of a twisted modular operad, generated by the elements of the symmetric group, to a modular operad generated by the Lagrangian submanifolds of an elliptic curve.  Using the chain complex nature of this morphism, we explicitly construct a set of relations between compositions (contractions via a bilinear form) of tensors in the Fukaya category.

These tensors are defined by summing over countable sets of tropical Morse trees and graphs (holomorphic disks and annuli, resp.), and the proof of the existence of these relations is done by examining degenerations of one-dimensional moduli spaces of these trees and graphs on the elliptic curve.  The degeneration of a moduli space of trees corresponds to a disk bubbling off from another disk and the degeneration of a space of graphs corresponds either to a disk bubbling off from an annulus, or an annulus pinched into a disk.  In terms of the modular operad, each composition is determined by the contraction of a certain edge of a stable graph, the fatgraph of which is homeomorphic to each point in the interior of the degenerating moduli space.

The relations are graded by numbers $b=\dim\mm{H}_1(G)+\sum_{\mm{Vert}(G)} b(v)$, where the $b(v)$ are integers which control the way in which the flags attached to the vertices $v$ of a stable graph $G$ are organized into cycles.  In general, these cycles provide the combinatorial data necessary to build the corresponding Riemann surface (fatgraph) from the given stable graph.  In this work, we have $b(v)=0$ for all vertices $v$ in a given stable graph, and each Riemann surface is built using either  a tropical Morse tree or graph, depending on whether $b=\dim\mm{H}_1(G)$ is equal to 0, or 1, respectively.  The $b=0$ relations are the usual $A_{\infty}$-relations, and the $b=1$ relations are the new Quantum $A_{\infty}$-relations.  The latter may be viewed as a sort of linear dependence, as the relation is just the equality between a countable sum of signed tensors and the zero vector.  It must be noted that the $A_{\infty}$ structure established in this paper is a pre-algebra structure, and not a complete algebra structure.

The tensors which satisfy these relations are precisely the solutions to the quantum master equation described by Barannikov in \cite{Bar}.  Using the solutions to the Quantum master equation, Barannikov constructs a partition function (see \cite{Kontsevich0}) that is used to define classes in the compactification $\overline{\mc{M}}_{n,g}$.  These classes, in turn, may provide a combinatorial description of Gromov-Witten invariants on the given manifold.  One may view this route to the construction of positive genus Gromov-Witten invariants on the elliptic curve as being parallel to the one described by C\v{a}ld\v{a}raru and Tu in \cite{CaTu}. 

All chain complexes in this paper are cohomological.  The departure from the homological convention adopted by
Getzler and Kapranov in \cite{Mod} was necessary for the sake of consistency with the gradings of the Fukaya Category.  We adopt the supercommutativity conventions $x\otimes y=(-1)^{|x||y|}y\otimes x$ and
$(f\otimes g)(x\otimes y) = (-1)^{|g||x|}f(x)\otimes g(y)$, for vectors $x,y$ and maps of vectors $f,g$, where $|\,\bullet\,|:=\deg(\,\bullet\,)$.

\subsection{Acknowledgements}
Warm thanks to Mark Gross for his prudent guidance throughout this project.  This research was partly supported by NSF grant DMS-0805328.




\section{Modular Operads}
The material in this first section is expository and may be found in \cite{Mod} or \cite{Bar}.  Some definitions have been altered slightly so as to better mesh with the remaining material.

\begin{defn}
A graph $G$ is a triple $(\text{Flag}(G),\lambda,\sigma)$, where $\text{Flag}(G)$
is a finite set, whose elements are called flags, $\lambda$ is a partition of $\text{Flag}(G)$, and
$\sigma$ is an involution acting on $\text{Flag}(G)$.  The vertices of $G$ are the unordered blocks (subsets) of the partition and the set of all such is denoted by $\text{Vert}(G)$. We write
$\text{Leg}(v)$ for the subset of $\text{Flag}(G)$ corresponding to the vertex $v$, and define the \emph{valence} of $v$ as the cardinality of $\text{Leg}(v)$.  Let
$\text{Edge}(G)$ denote the set of two-cycles of $\sigma$ and let $\text{Leg}(G)$ denote the subset
of $\text{Flag}(G)$ fixed by $\sigma$. We say that two legs $meet$ if they either belong to the
same vertex, or comprise an edge.  A leg is \emph{external} if it remains fixed under $\sigma$. 
\end{defn}

\begin{ex}

$\text{Flag}(G) = \{1,\dots,9\}$, $\lambda = \{1,2,3\}\cup\{4,5,6\}\cup\{7,8,9\}$,  $\sigma =
(12)(34)(57)(68)$

\begin{figure}[h]
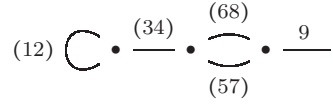

\[  \xygraph{
!{<0cm,0cm>;<1cm,0cm>:<0cm,1cm>::} !{(0,0) }*+{_{\bullet}}="a" !{(1,0) }*+{_{\bullet}}="b" !{(2,0)
}*+{_{\bullet}}="c" !{(3,0) }*+{}="d" "a"-"b"^{(34)} "c"-"d"^{9} "b"-@/_/"c"_{(57)} "b"-@/^/"c"^{(68)}
"a" -@(lu,ld) "a"_{(12)} }  \]
\caption{$G$ has three vertices, four edges, and one external leg.}
\label{firstgraph}
\end{figure}
\end{ex}

\begin{defn}\label{stablegraph}
A \emph{stable} graph $G$ is a connected graph with a non-negative integer $b(v)$ assigned to each vertex $v\in\mm{Vert}(G)$, such that $2b(v)+n(v)-2>0$ for any $v\in\mm{Vert}(G)$, where $n(v)$ is the valence of $v$.  We note that since the complexes used in this work are cohomological, the stability condition is more appropriately written $n(v)>2-2b(v)$.  For a stable graph $G$ we put $b(G)=\sum_{v\in\mm{Vert}(G)}b(v)+\dim \mm{H}_1(G)$, where $\mm{H}_1(G)$ is computed by replacing $G$ with the corresponding
CW-complex in the obvious way (See Definition \ref{topologicalG}). 
\end{defn}

If $G$ is contractible, then $G$ will be called a
$tree$.  We can break symmetry and call one of the legs the $output$, and the rest of the legs will be called
the $inputs$.  A tree of this type is a $rooted$ $tree$.

\begin{prop}\label{EdgeVsVertices}
If $G$ is a stable graph, then $$|\mm{Edge}(G)|=b(G)-1+\sum_{\mm{Vert}(G)}(1-b(v)).$$
Note that when $b(v)=0$ for all $v\in\mm{Vert}(G)$ this formula reduces to Euler's formula for planar graphs, i.e., $|\mm{Edge}(G)|=\dim\mm{H}_1(G)-1+|\mm{Vert}(G)|$, where the number of ``faces" of a planar graph $G$ is given by $\dim\mm{H}_1(G)+1$.
\end{prop}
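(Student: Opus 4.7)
The plan is to reduce the claim to the Euler characteristic formula for a connected one-dimensional CW complex. Substituting the definition $b(G)=\sum_{v\in\mm{Vert}(G)}b(v)+\dim\mm{H}_1(G)$ from Definition \ref{stablegraph} into the right-hand side of the asserted identity, the $b(v)$ contributions cancel, and the claim becomes
$$|\mm{Edge}(G)|=\dim\mm{H}_1(G)+|\mm{Vert}(G)|-1.$$

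Next, I would realize $G$ as a one-dimensional CW complex $|G|$ with $0$-cells indexed by $\mm{Vert}(G)$ and $1$-cells indexed by $\mm{Edge}(G)$ (the two-cycles of $\sigma$). The legs, being fixed points of $\sigma$, either are omitted or may be attached as half-edges; in either case they deformation-retract onto their incident vertex and do not affect $\mm{H}_1(G)$. Since a stable graph is connected by Definition \ref{stablegraph}, $|G|$ is connected, so $\dim\mm{H}_0(|G|)=1$ and $\mm{H}_i(|G|)=0$ for $i\geq 2$. Computing the Euler characteristic of $|G|$ in two ways gives
$$|\mm{Vert}(G)|-|\mm{Edge}(G)|=\chi(|G|)=\dim\mm{H}_0(|G|)-\dim\mm{H}_1(|G|)=1-\dim\mm{H}_1(G),$$
and rearranging yields the displayed identity above, which is what was to be shown.

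The main point of care — rather than an obstacle — is bookkeeping the legs correctly: legs are flags fixed by $\sigma$, so they contribute to $\mm{Flag}(G)$ but not to $\mm{Edge}(G)$, and a dangling half-edge contributes nothing to $\mm{H}_1$. The stability hypothesis and the non-negativity of the integers $b(v)$ play no active role in the derivation; the identity is really a statement about the Euler characteristic of the underlying graph, and the $b(v)$ terms are arranged on both sides of the definition of $b(G)$ so that they cancel in the final formula. The remark about planar graphs then follows by observing that when every $b(v)=0$ the identity specializes to $E=\dim\mm{H}_1(\mathcal{G})-1+V$, which is the usual Euler relation $V-E+F=2$ with $F=\dim\mm{H}_1(\mathcal{G})+1$.
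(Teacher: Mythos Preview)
Your argument is correct: once the definition of $b(G)$ is substituted, the $b(v)$ terms cancel and the statement becomes the Euler characteristic identity $|\mm{Vert}(G)|-|\mm{Edge}(G)|=1-\dim\mm{H}_1(G)$ for a connected one-dimensional CW complex, which you verify cleanly. The paper does not give its own proof here---it simply cites \cite{Mod}---so there is no in-paper argument to compare against; your Euler-characteristic reduction is the standard derivation and is exactly what one finds in the reference.
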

\begin{proof}
See \cite{Mod}.
\end{proof}

Let $G$ and $G'$ be two graphs.  Let $S\subseteq\mm{Leg}(G)$ and $S'\subseteq\mm{Leg}(G')$ be two subsets such that there exists a bijection $\phi:S\rightarrow S'$.  We can form a new graph $G\coprod_{\varphi}G'$
by gluing the elements of $S$ to the elements of $S'$ using $\varphi$.  Then
$$\mathrm{Edge}(G\coprod_{\varphi}G^{'})=\mathrm{Edge}(G)\cup \mathrm{Edge}(G^{'})\cup \{(i\,\varphi(i))\}$$ for $i\in S$
and $$\mathrm{Leg}(G\coprod_{\varphi}G')\subseteq \mathrm{Leg}(G)\sqcup \mathrm{Leg}(G^{'}).$$  We
can then decompose any graph $G$ as
$$
G = \coprod_{v\in \mathrm{Vert}(G)}*_{n(v),b(v)}^l,
$$
where $*_{n(v),b(v)}^l$ is the unique graph with one vertex, $l$ loops, $n(v)=|\mm{Leg}(v)|-2l$ legs, and $v$ is labeled by $b(v)$.  I write $*_{n,b}$ for $*_{n,b}^0$.  The bijection $\varphi$ is given implicitly by $G$.

\begin{defn}
Let $f:G_0\rightarrow G_1$ be a morphism as defined in section 2.13 of \cite{Mod}, and let $v\in \mathrm{Vert}(G_1)$.  Define $f^{-1}(v)\subseteq\mm{Flag}(G_0)$ as the set of all flags which either comprise
edges collapsed to $v$ by $f$, or are the flags attached to the vertices which bound the collapsed edges.
The corresponding graph, which will also be denoted $f^{-1}(v)$, is uniquely defined by the arrangement of these flags in $G_0$.
\end{defn}

\begin{defn}
Denote by $\Gamma((n,b))$ the category whose objects are pairs $(G,\rho)$
where $G$ is a stable graph and $\rho$ is a bijection between $\mm{Leg}(G)$ and the set
$\{1,\dots, n\}$, and whose morphisms are morphisms of stable graphs preserving the
labeling $\rho$ of the legs.  The graph $*_{n,b}$ defined above is the final object of this category.
\end{defn}

For any subset $I\subseteq \mm{Edge}(G)$, let $G/I$ denote the graph obtained by contracting each element of $I$.  Let $\left[\Gamma((n,b))\right]$ denote the set of isomorphism classes of this category, which is finite by Lemma 2.16 of \cite{Mod}.

\begin{defn}(Topological Definition for $\Gamma((n,b))$):\label{topologicalG}
For $G$ a stable graph, let $|G|$ denote the corresponding 1-dimensional CW-complex.  Replace each object $G$ of $\Gamma((n,b))$ with $|G|$, and each morphism $f:G_0\longrightarrow G_1$ with a continuous map $\varphi_f:|G_0|\longrightarrow |G_1|$ which is constant on edges which collapse, and is the identity everywhere else.  The value on each collapsed edge is given by $f|_{\mm{Edge}(f^{-1}(v))}=v$, as every collapsed edge belongs to a set of the form $\mm{Edge}(f^{-1}(v))$, for some $v\in\mm{Vert}(G_1)$.
\end{defn}

\begin{defn}\label{StableSModDef}
A stable $\mb{S}$-module is a collection of chain complexes $\{P((n,b))\}$ with an action of $\mb{S}_n$ on $P((n,b))$, and such that $P((n,b))=0$ if $2b+n-2 \leq0$.  A morphism of stable $\mb{S}$-modules $P\rightarrow Q$ is a collection of equivariant maps of chain complexes $P((n,b))\rightarrow Q((n,b))$.  There is a natural extension of this definition over a finite set $I$.  See Section 2.1 of \cite{Mod} for further details.
\end{defn}

\begin{defn}\label{monad}
Let $P$ be a stable $\mb{S}$-module and let $G$ be a stable graph.  A stable $\mb{S}$-module $P((n,b))$ has a natural extension over a finite set, such as $Leg(v)$.  Set $P((G))=\bigotimes_{\mm{Vert}(G)}P((\mm{Leg}(v),b(v)))$.  One may then write
\begin{equation}\label{tensor}
    P((G\coprod_{\varphi}G^{'})) = P((G))\otimes P((G^{'}))
\end{equation}
for any $G, G^{'}, \varphi$.

\end{defn}

\begin{defn}
Let $V$ be a chain complex over a field of characteristic zero and equip $V$ with a group action $A\times V\longrightarrow V$ for $A$ a finite group.  Denote by $V_A$ the chain complex $V/<\{av-v|a\in A,v\in V\}>$ of \emph{co-invariants}.
\end{defn}

\begin{defn}\label{Pre-Op}
A \emph{modular pre-operad} is a stable $\mb{S}$-module $P$ together with a chain map
\begin{equation}\label{ModOp}
    \mu:\mb{M}P((n,b)):=\bigoplus_{G\in[\Gamma((n,b))]}P((G))_{\mm{Aut}(G)}\longrightarrow P((n,b))
\end{equation}
called the \emph{structure map}. 
The group $\mm{Aut}(G)$ is the set of homeomorphisms from $G$ to itself which fix the legs.  In particular, $\mm{Aut}(*_{n,b})$ is the trivial group.

See Section 2.17 of \cite{Cyclic} for a detailed discussion of the endofunctor $\mb{M}$ and the structure map $\mu:\mb{M}\mathcal{A}\longrightarrow\mathcal{A}$ for a stable $\mb{S}$-module $\mathcal{A}$.   

For the specific modular operads considered in this work, the maps $P((G))_{\mm{Aut}(G)}\longrightarrow P((n,b))$ result from the application of the appropriate power of some bilinear form $B$ to the chain complex $P((G))$. 

\end{defn}
If $P$ is a modular pre-operad and $G\in\mm{Ob}\,\Gamma((n,b))$, denote by
\begin{equation}\label{mu}
    \mu_G:P((G))\longrightarrow P((n,b))
\end{equation}
the $\mb{S}_n$-equivariant map obtained by composing the universal map (projection onto co-invariants followed by inclusion)
$$P((G))\longrightarrow \mb{M}P((n,b))=\bigoplus_{G\in[\Gamma((n,b))]}P((G))_{\mm{Aut}(G)}$$
with the structure map $\mu:\mb{M}P((n,b))\longrightarrow P((n,b))$.  This map is called composition along the graph $G$, and should be viewed as being parameterized by the contraction of the edges of $G$.  In this paper $\mu$ takes the form of a trace map.
\begin{defn}
Given a morphism $f:G_0\longrightarrow G_1$ of stable graphs, define the morphism $P((f)):P((G_0))\longrightarrow P((G_1))$ to be the composition
\begin{equation}\label{P(f)}
    P((G_0))=\bigotimes_{u\in\mm{Vert}(G_0)}P((\mm{Leg}(u),b(u)))\simeq\bigotimes_{v\in\mm{Vert}(G_1)}P((f^{-1}(v)))\qquad\qquad
\end{equation}
\begin{center}$
\begin{diagram}
\node{}\arrow{e,t}{\otimes_v\mu_{f^{-1}(v)}}\node{\bigotimes_{v\in\mm{Vert}(G_1)}P((\mm{Leg}(v),b(v)))=P((G_1))}
\end{diagram}$
\end{center}
\end{defn}

\begin{defn}\label{ModOpDef}
A \emph{modular operad} is a modular pre-operad $P$ such that
for any $G\in\Gamma((n,b))$ and $f\in\mm{Mor}(\Gamma((n,b)))$, the associations $$\begin{array}{ccc}
                                                                  G & \mapsto & P((G)) \\
                                                                  f & \mapsto & P((f))
                                                                \end{array}$$
define a functor from the category of stable graphs to the category of chain complexes over some field $k$.  In other words, $P((f\circ g))=P((f))\circ P((g))$ for any composition
\begin{center}
$\begin{diagram}
\node{G_0}\arrow{e,t}{g}\node{G_1}\arrow{e,t}{f}\node{G_2}
\end{diagram}$
\end{center}
of morphisms of stable graphs.
\end{defn}

For any morphism of graphs $G_0\longrightarrow G_1$, the morphism $P(G_0)\longrightarrow P(G_1)$ is a morphism of chain complexes.  So the equation
 $$\otimes_v\mu_{f^{-1}(v)}\circ \mm{d}_{P((G))}=\mm{d}_{P((G/J))}\circ\otimes_v\mu_{f^{-1}(v)}$$holds, where $\mm{d}_{P((G))}$ is the differential on $P((G))$, and $v$ runs through $\mm{Vert}(G_1)$.

\subsection{The Feynman Transform}
\subsubsection{The Free Modular Operad $\mb{M}P$}

Given a stable $\mb{S}$-module $P$, the stable $\mb{S}$-module underlying $\mb{M}P$ was defined in \eqref{ModOp} as $$\mb{M}P((n,b))=\bigoplus_{G\in[\Gamma((n,b))]}P((G))_{\mm{Aut}(G)}$$
If $P$ is a stable $\mb{S}$-module, then the composition map $\mu_G^{\mb{M}P}:\mb{M}P((G))\longrightarrow \mb{M}P((n,b))$ can be constructed by first mapping $\mb{M}P((G))$ to $P((G))$, projecting to coinvariants, and finally, including $P((G))_{\mm{Aut}(G)}$ into $\mb{M}P((n,b))$.

The map $\mb{M}P((G))\longrightarrow P((G))$ is constructed as follows.  Take $G\in\left[\Gamma((n,b))\right]$
and let $S_G = \left\{\oplus \eta_j \,|\,\eta_j:\{v_j\}\rightarrow\left[\Gamma((\mathrm{Leg}(v_j),b(v_j)))\right] \right\}$, where $\eta_j$
is a function that assigns to each vertex $v_j$ in $\mm{Vert}(G)$ a graph $G_{v_j}$ in $[\Gamma((\mm{Leg}(v_j),b(v_j)))]$.  Then
\begin{eqnarray}
  \mathbb{M}P((G)) &=& \bigotimes_{v\in\mm{Vert}(G)}\mathbb{M}P((\mm{Leg}(v), b(v))) \nn\\
   &=& \bigotimes_{v\in\mm{Vert}(G)}\left(\bigoplus_{H\in \left[\Gamma((\mathrm{Leg}(v),b(v)))\right]}P((H))_{\mathrm{Aut}(H)}\right) \nn\\
   &=& \bigoplus_{S_G}\left(\bigotimes_{\mathrm{Vert}(G)}P\left((\eta_j(v_j))\right)_{\mathrm{Aut}(\eta_j(v_j))}\right) \nn\\
   &=& \bigoplus_{S_G}\left(\bigotimes_{\mathrm{Vert}(G)}P((G_{v_j}))_{\mathrm{Aut}(G_{v_j})}\right) \nn
\end{eqnarray}
By taking $\eta_j=\mm{id}$ for all vertices $v_j$ in $\mathrm{Vert}(G)$ and noting that
$\mathrm{Aut}(*_{n,b})\simeq\mathrm{id}$ for any stable pair $(n,b)\in \mb{Z}_{\geq1}\times\mb{Z}_{\geq0}$, one
sees that $P((G))$ is indeed a summand of $\mb{M}P((n,b))$, and the map $\mb{M}P((G))\longrightarrow P((G))$ is the canonical projection.

Further insight into the structure of the composition $\mu_G^{\mb{M}P}$ may be obtained by considering a graph, which we denote by $G_{\eta}$, constructed from the graphs $G_{v}=\eta(v)$ above.  Let $G\in\Gamma((n,b))$ be such that $\mathrm{Vert}(G) = \{1, \dots,m\}$ with involution $\sigma$ and let $G_{v_i}\in\left[\Gamma((\mathrm{Leg}(v_i),b(v_i)))\right]$ for
$i = 1,\dots,m$.  For any category $\Gamma((n,b))$ there is a fixed labeling of the legs of any graph $G\in\Gamma((n,b))$, so every graph $G_{v_i}\in[\Gamma((\mm{Leg}(v_i),b(v_i)))]$ comes with a bijection $f_i:\mm{Leg}(G_{v_i})\longrightarrow\mm{Leg}(v_i) $.  It is important to recall that by a
$vertex$ we mean not only the associated point in the geometric realization, but the set of flags
emanating from that point as well.  Now, replace $v_i$ with $G_{v_i}$ and glue $l_{\alpha},l_{\beta} \in
\bigsqcup_i\mm{Leg}(G_{v_i})$ together if and only if $\sigma f_i(l_{\alpha}) =
f_j(l_{\beta})$ with $l_{\alpha}\in\mm{Leg}(G_{v_i})$ and $l_{\beta}\in\mm{Leg}(G_{v_j})$, that is, if and only if the legs $f(l_{\alpha})$ and $f(l_{\beta})$ form an edge in the original graph.  The resulting graph $G_{\eta}$ is such that $\mathrm{Leg}(G_\eta) = \mathrm{Leg}(G)$ and since $b(v_i) = b(G_{v_i})$, it must
be the case that $G_\eta$ belongs to $\Gamma((n,b))$.

The complex $\mb{M}P((G))$ should be viewed as a list of all possible complexes parameterized by graphs built from $G$ by replacing each vertex $v$ of $G$ with a new graph $G_v\in\Gamma((n(v),b(v)))$.  The restricted contraction map $\mu_G^{\mb{M}P}|_{\bigotimes_{\mathrm{Vert}(G)}P((G_{v_j}))_{\mathrm{Aut}(G_{v_j})}}$ is then the identification
of $\bigotimes_{\mathrm{Vert}(G)}P((G_{v_j}))_{\mathrm{Aut}(G_{v_j})}$ with $P((\tilde{G}))$, where $\tilde{G}$ is the `inflation' of
$G$, obtained by replacing each vertex $v_j$ of $G$ with $G_{v_j}$.



\begin{ex}

Let \[  \xygraph{
!{<0cm,0cm>;<2cm,0cm>:<0cm,2cm>::}
!{(5.2,0) }*+{_{\bullet}}="y"
!{(5.5,0) }*+{_{b(v)=0}}="kk"
!{(4.2,0) }*+{G}="z"
!{(4.6,0) }*+{=}="w"
!{(4.9,.4) }*+{_1}="aa"
!{(5.2,.5) }*+{_2}="bb"
!{(5.5,.4) }*+{_3}="cc"
!{(4.5,.5) }*+{}="dd"
"aa"-"y" "bb"-"y" "cc"-"y" "y" -@`{p+(-.5,-.5),p+(.5,-.5)} "y"_{(kk')}
}  \]
and let \[  \xygraph{
!{<0cm,0cm>;<2cm,0cm>:<0cm,2cm>::}
!{(5.2,0) }*+{_{\bullet}}="y"
!{(5.5,0) }*+{_{b(w)=0}}="kk"
!{(4.2,0) }*+{H}="z"
!{(4.6,0) }*+{=}="w"
!{(4.9,.4) }*+{_1}="aa"
!{(5.2,.5) }*+{_2}="bb"
!{(5.5,.4) }*+{_3}="cc"
!{(5.5,-.4) }*+{_{\ell'}}="a"
!{(4.9,-.4) }*+{_{\ell}}="b"
"aa"-"y" "bb"-"y" "cc"-"y" "y"-"a" "y"-"b"
}  \]
As $G$ has only one vertex $v$, each $\oplus\eta_j$ from above is replaced by a single map $\eta:\{v\}\longrightarrow [\Gamma((\mm{Leg}(v),b(v)))]$, and $S_G$ can be replaced by $[\Gamma((\mm{Leg}(v),b(v)))]$.  Let $G_{\eta_0}$ correspond to the $\eta$ such that $\eta(v)=H$.  The bijection $f$ in this case is $$f:\mm{Leg}(H)\longrightarrow \mm{Leg}(v)$$
$$1,2,3,\ell,\ell'\mapsto 1,2,3, k,k',$$
and applying the transposition $\s=(kk')$ to $f(\ell')$ gives $f(\ell)$, so
$G_{\eta_0}=G$.
Examine the composition map $\mu_G^{\mb{M}P}$:

$$\mu_G^{\mb{M}P}:\mb{M}P((G))=\bigotimes_{\mm{Vert}G}\mb{M}P((\mm{Leg}(v),b(v)))=\mb{M}P((\mm{Leg}(v),0))=$$ $$=\bigoplus_{K\in[\Gamma((\mm{Leg}(v),0))]}P((K))_{\mm{Aut}(K)}\longrightarrow\bigoplus_{L\in[\Gamma((3,1))]}P((L))_{\mm{Aut}(L)}=$$$$=\mb{M}P((3,1))$$

The map $\mu_G^{\mb{M}P}|_{P((H))_{\mm{Aut}(H)}}:P((H))_{\mm{Aut}(H)}\longrightarrow P((G))_{\mm{Aut}(G)}$ is the canonical projection $P((G))\longrightarrow P((G))_{\mm{Aut}(G)}$, since $P((G))=P((H))$, and $\mm{Aut}(H)=\mm{id}$.  The critical step here is replacing the vertex of $G\in\Gamma((\mm{Leg}(v),0))$ with the graph $G_{\eta_0}=H$, and gluing the legs $\ell$ and $\ell'$ via $f$ to obtain a graph homeomorphic to the original graph $G$.

\end{ex}

\begin{defn}
A cocycle D is a functor (not necessarily monoidal) $$\Gamma((n,b))\longrightarrow\mm{Graded\,\, Vector\,\, Spaces}$$ satisfying the following conditions:

 \begin{itemize}

 \item[i)]$\dim D(G)=1$ for all $G\in\Gamma((n,b))$
 \item[ii)] $D(*_{n,b})=\mb{C}$
 \item[iii)] For any morphism of stable graphs $f:G\longrightarrow G/I$, there is an isomorphism $$\nu_f:D(G/I)\otimes \bigotimes_{v\in\mm{Vert}(G/I)}D(f^{-1}(v))\longrightarrow D(G)$$

 \item[iv)] Given a composition of morphisms $G_0\overset{f_1}{\longrightarrow} G_1\overset{f_2}{\longrightarrow} G_2$, the following diagram commutes:
{\scriptsize\begin{center}
$\begin{diagram}
\node{D(G_2)\otimes\bigotimes_{v\in\mm{Vert}(G_2)}D(f_2^{-1}(v))\otimes\bigotimes_{v\in\mm{Vert}(G_1)}D(f_1^{-1}(v))} \arrow{s,l}{\simeq}\arrow{e,t}{\nu_{f_2}}\node{D(G_1)\otimes\bigotimes_{v\in\mm{Vert}(G_1)}D(f_1^{-1}(v))}\arrow[2]{s,r}{\nu_{f_1}}\\
\node{D(G_2)\otimes\bigotimes_{v\in\mm{Vert}(G_2)}(D(f_2^{-1}(v))\otimes\bigotimes_{w\in f_2^{-1}(v)}D(f_1^{-1}(w)))}\arrow{s,l}{D(G_2)\otimes\bigotimes_{v\in\mm{Vert}(G_2)}\nu_{f_1}|_{f_2^{-1}(v)}}\\
\node{D(G_2)\otimes\bigotimes_{v\in\mm{Vert}(G_2)}D((f_2\circ f_1)^{-1}(v))}\arrow{e,b}{\nu_{f_2\circ f_1}}\node{D(G_0)}
\end{diagram}$
\end{center}}

\item[v)] If $f:G_0\longrightarrow G_1$ is an isomorphism, the following diagram commutes:

\begin{center}$\begin{diagram}
\node{D(G_1)\otimes\bigotimes_{v\in\mm{Vert}(G_1)}D(f^{-1}(v))}\arrow{e,t}{\nu_f}\arrow{se,b}{\mm{id}\otimes g}\node{D(G_0)}\arrow{s,r}{D(f)}\\
\node[2]{D(G_1)}\end{diagram}$
\end{center}
where the map $g$ is given as follows.  Let $\pi_v$ be the projection $\pi_v:f^{-1}(v)\longrightarrow *_{n(v),b(v)}$.  Then $D(\pi_v)$ is the map $D(\pi_v):D(f^{-1}(v))\longrightarrow D(*_{n(v),b(v)})=\mb{C}$, and we set $g=\bigotimes_{\mm{Vert}(G_1)}D(\pi_v)$.
\end{itemize}
\end{defn}

Let $D$ be a cocycle.  Define $D^n(G):=D(G)^{\otimes n}$ and $D^{-1}(G):=D(G)^*$.  By definition, for any stable
graph $G$, $D(G)$ is one-dimensional and concentrated in a single degree, meaning $D^n(G)$ and $D^{-1}(G)$ are 
also one-dimensional and concentrated in a single degree. 

For example, let $V$ be a one-dimensional vector space over $\mb{C}$ thought of as a chain complex concentrated in degree 0, and assume $D$ is such that $D(G)=V[m]$ for some integer $m$.  Then $D^n(G)=V^{\otimes m}[nm]$ and $D^{-1}(G)=V^*[-m]$.

\begin{defn}\label{CocycleExample}
Define \begin{eqnarray}\label{DefOfK}
         \mc{K}(G) &:=& \mm{Det}(\mm{Edge}(G)) \\
          &=& {\bigwedge}^{\mm{top}}\mb{C}^{\mm{Edge}(G)}[|\mm{Edge}(G)|] \nn
       \end{eqnarray}
and
\begin{equation}\label{DefOfB}
  \mc{B}(G):=\mb{C}[b_1(G)]
\end{equation}


\end{defn}

\begin{defn}
Twisted modular operads are defined by simply replacing $P((G))$ in Definitions \eqref{Pre-Op} and \eqref{ModOpDef} with $D(G)\otimes P((G))$.  More specifically, the map \eqref{ModOp} becomes
\begin{equation}\label{TwistModOp}
    \mu:\mb{M}_DP((n,b)):=\bigoplus_{G\in[\Gamma((n,b))]}(D(G)\otimes P((G)))_{\mm{Aut}(G)}\longrightarrow P((n,b))
\end{equation}
and \eqref{mu} becomes
\begin{equation}\label{Twistmu}
    \mu_G:D(G)\otimes P((G))\longrightarrow D(*_{n,b})\otimes P((*_{n,b}))=P((n,b))
\end{equation}
A twisted modular operad should be thought of as a modular operad whose composition maps $\mu_G$ are twisted by a factor of $D(G)$.
\end{defn}

\begin{defn}
Let $s$ be a stable $\mb{S}$-module such that $\dim_{\mb{C}}s((n,b))=1$ for all $b$, $n$.
The \emph{coboundary} of $s$ is the cocycle $$D_s(G):=s((n,b))\otimes \bigotimes_{v\in\mm{Vert}(G)}s((n(v),b(v)))^*$$
\end{defn}

\begin{prop}
If $P$ is a modular $D$-operad, then $sP:=s\otimes P$ is a modular $D_sD$-operad.
\end{prop}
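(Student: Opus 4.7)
The plan is to define the structure map for $sP$ as a modular $D_s D$-operad by unpacking the tensor products, using the canonical evaluation pairing on the one-dimensional factors of $s$, and then applying the structure map of $P$. The axioms for $sP$ then reduce to the corresponding axioms for $P$ combined with the cocycle condition for $D_s$, which is itself a telescoping identity on $s$ and $s^*$ factors.

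First I would construct the structure map. By definition $(sP)((n,b)) = s((n,b)) \otimes P((n,b))$, so Definition \ref{monad} gives
\begin{equation*}
(sP)((G)) = \bigotimes_{v \in \mm{Vert}(G)} s((n(v),b(v))) \otimes P((n(v),b(v))).
\end{equation*}
Expanding $D_s(G) \otimes D(G) \otimes (sP)((G))$ then yields
\begin{equation*}
s((n,b)) \otimes \bigotimes_v s((n(v),b(v)))^* \otimes D(G) \otimes \bigotimes_v s((n(v),b(v))) \otimes P((n(v),b(v))).
\end{equation*}
Applying the supercommutativity conventions of the introduction to reorder the tensor factors, and then applying the canonical evaluation pairing $s((n(v),b(v)))^* \otimes s((n(v),b(v))) \to \mb{C}$ at each vertex $v$, produces a natural isomorphism with $s((n,b)) \otimes D(G) \otimes P((G))$. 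Composing this with $\mm{id}_{s((n,b))} \otimes \mu_G^P$, where $\mu_G^P$ is the $D$-operad structure on $P$, defines the required map
\begin{equation*}
\mu_G^{sP}: D_s(G) \otimes D(G) \otimes (sP)((G)) \longrightarrow s((n,b)) \otimes P((n,b)) = (sP)((n,b)).
\end{equation*}

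Next I would verify that $D_s$ is itself a cocycle, from which $D_s D$ inherits the cocycle structure factorwise. Conditions (i) and (ii) are immediate from $\dim s((n,b)) = 1$. For condition (iii), given $f: G \to G/I$, for each $v \in \mm{Vert}(G/I)$ the factor $D_s(G/I)$ contributes an $s((n(v),b(v)))^*$ and the factor $D_s(f^{-1}(v))$ contributes an $s((n(v),b(v)))$; pairing these telescopes the tensor product to $s((n,b)) \otimes \bigotimes_{w \in \mm{Vert}(G)} s((n(w),b(w)))^* = D_s(G)$. The axioms (iv) and (v) follow from naturality and functoriality of the evaluation pairing.

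Finally, I would verify the axioms of Definition \ref{ModOpDef} for $sP$ with the structure map $\mu^{sP}$. The $\mb{S}_n$-equivariance and the factoring through $\mm{Aut}(G)$-coinvariants follow from the naturality of the evaluation pairing combined with the corresponding properties of $\mu^P$. The main content is the functoriality $(sP)((f_2 \circ f_1)) = (sP)((f_2)) \circ (sP)((f_1))$ for a composition of morphisms of stable graphs; after inserting the identifications from the first step, this reduces to the cocycle identity for $D_s$ together with the $D$-operad functoriality of $P$. I expect the main obstacle to be the sign bookkeeping introduced by the supercommutativity convention $(f \otimes g)(x \otimes y) = (-1)^{|g||x|} f(x) \otimes g(y)$ when permuting the $s$ and $s^*$ factors past the $D$ and $P$ factors; one must fix the reordering conventions so that the composition diagram commutes strictly rather than up to sign. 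Once signs are tracked consistently at the level of the evaluation pairing, the verification reduces verbatim to the corresponding axiom for $P$.
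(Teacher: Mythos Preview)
Your proposal is correct and is essentially the standard argument: construct $\mu_G^{sP}$ by contracting each $s((n(v),b(v)))^*$ against its partner $s((n(v),b(v)))$ and then applying $\mu_G^P$, check that $D_s$ is a cocycle via the telescoping of the evaluation pairings, and deduce functoriality for $sP$ from that of $P$. The paper does not actually prove this proposition at all; its proof consists solely of the citation ``See \cite{Mod}'', i.e.\ it defers to Getzler--Kapranov. What you have written is exactly the argument one finds upon following that reference, so there is nothing to compare beyond noting that you have supplied the details the paper omits.
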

\begin{proof}
See \cite{Mod}.
\end{proof}

\begin{defn}
Let $D$ be a cocycle as defined above.  The free modular $D$-operad $\mb{M}_DP$ is defined by
\begin{equation}\label{TwistedFree}
    \mb{M}_DP((n,b))=\bigoplus_{G\in[\Gamma((n,b))]}(D(G)\otimes P((G)))_{\mm{Aut}(G)},
\end{equation}
with composition map $$D(G)\otimes\mb{M}_DP((G))\longrightarrow \mb{M}_DP((n,b)),$$
and is said to be \emph{generated} by the stable $\mb{S}$-module $P$.  The complex $\mb{M}_DP((G))$ is defined similarly to $\mb{M}P((G))$, i.e., $\mb{M}_DP((G))=\bigotimes_{\mm{Vert}(G)}\mb{M}_DP((\mm{Leg}(v),b(v)))$.
\end{defn}

The composition map is constructed in the same way as before by identifying the summand
$\bigotimes_{\mathrm{Vert}(G)}(D(G_{v_j})\otimes P((G_{v_j})))_{\mathrm{Aut}(G_{v_j})}$
with $(D(\tilde{G})\otimes P((\tilde{G})))_{\mathrm{Aut}(\tilde{G})}$, where $\tilde{G}$ is obtained from $G$ by replacing
each vertex $v$ of $G$ with $G_{v_j}$.


\begin{defn}
Let $P$ be a modular $D$-operad and define the cocycle $D^{\vee}:=\mc{K}D^{-1}$, where $\mc{K}$ is as in Definition \ref{CocycleExample}.  The \emph{Feynman Transform} of $P$ is the twisted modular $D^{\vee}$-operad given by

\begin{equation}\label{FeynmanDef}
    \mc{F}_DP=\mb{M}_{D^{\vee}}P^*,
\end{equation}
where $P^*$ is the linear dual of the stable $\mb{S}$-module. 
\end{defn}


\subsection{The Twisted Modular Operad $\mc{E}_V$}\label{evsings}

Let $V$ be a chain complex such that its homogeneous subspaces $V_i$ are finite-dimensional for all $i$, and let $|x|=$ deg $x$.  An inner product on $V$ is a non-degenerate bilinear form $B$ such that $B(dx,y)+(-1)^{|x|}B(x,dy)=0$, where $d$ is the differential of $V$.  Such a bilinear form is symmetric (resp. anti-symmetric) if $B(y,x)=(-1)^{|x||y|}B(x,y)$ (resp. $B(y,x)=-(-1)^{|x||y|}B(x,y)$), and has degree $k$ if $B(x,y)=0$ unless $|x|+|y|=k$.  In this paper we take $B$ to be anti-symmetric.  We will also write $V^S$ for $\bigoplus_SV$, where $V$ is as above and $S$ is a finite set. 


Let $D(G)= \mc{K}^{-1}(G)\otimes\bigotimes_{\mm{Edge}(G)}{\bigwedge}^2\mb{C}^{\{s_e,t_e\}}$, where, for a finite set $S$, $\mb{C}^S$ is a $|S|$-dimensional vector space over $\mb{C}$ with basis vectors indexed by $S$.  Set $\mc{E}_V((G)):=V^{\otimes\mm{Flag}G}$ with $v_1\otimes\cdots\otimes v_n\in \mc{E}_V((n,b))$ sitting in degree $n-\sum\limits_{i=1}^n\deg v_i$.  The degree adjustment from
 $\sum\limits_{i=1}^n\deg v_i$ is necessary because the complexes we are considering here are cohomological.  Define$$\mu_G^{\mc{E}_V}:D(G)
\otimes\mc{E}_V((G))\longrightarrow\mc{E}_V((n,b))$$by
\begin{equation}\label{dummy}
  ({\bigwedge}^{\epsilon_G}e_i)[-\epsilon_G]\otimes\bigotimes_{\mm{Edge}(G)}(f\wedge f')\otimes\bigotimes_{f\in\mm{Flag}(G)}v_f\mapsto (-1)^{\epsilon}\prod_{\substack{\mm{Edges}\\(f,f')}}B(v_f,v_{f'})\bigotimes_{f\in\mm{Leg}(G)}v_f,
\end{equation}
where $B$ is a pairing $V\otimes V\longrightarrow\mb{C}$ of degree 1, $\epsilon_G=|\mm{Edge}(G)|$, the factor $({\bigwedge}^{\epsilon_G}e_i)[-\epsilon_G]$ accounts for the degree of this pairing, which will be denoted by $B$, and the second factor $\bigotimes_{\mm{Edge}(G)}(f\wedge f')$ accounts for the anti-symmetry of $B$ for $(ff')$ a two-cycle of the involution of $G$.  This pairing will be denoted by $B$ for the remainder of the paper.  The sign $(-1)^{\epsilon}$ is defined as follows:

By Proposition 2.23 of \cite{Mod}, all compositions $\mu_G^{\mc{E}_V}:D(G)\otimes\mc{E}_V((G))\longrightarrow \mc{E}_V((n,b))$ can be built from compositions parameterized by contracting either the single edge of $G_0=*_{n_1,b_1}\coprod *_{n_2,b_2}$, or the single edge of $G_1=*^1_{n,b}$.

For the graph $ *_{n_1,b_1}\coprod *_{n_2,b_2} $ with edge given by the two-cycle $(ff')$, one has  $\mc{E}_V((G))=\mc{E}_V((n_1,b_1))\otimes \mc{E}_V((n_2,b_2))$ by Definition \ref{monad}, and the simple tensors have the form
$$(v_1\otimes\cdots\otimes v_f\otimes\cdots\otimes v_{n_1})\otimes(w_1\otimes\cdots\otimes w_{f'}\otimes\cdots\otimes w_{n_2})$$
Contraction via $B$ is done in several steps, each involving a permutation of the factors with the goal being to insert, in the operadic sense, $v$ into $w$.  Assume $v_f$ sits in the $\alpha^{th}$ spot of $v_1\otimes\cdots\otimes v_{n_1}$ and $w_{f'}$ sits in the $\beta^{th}$ spot of $w_1\otimes\cdots\otimes w_{n_2}$.  We adopt the supercommutativity conventions $x\otimes y=(-1)^{|x||y|}y\otimes x$ and
$(f\otimes g)(x\otimes y) = (-1)^{|g||x|}f(x)\otimes g(y)$.

First, move $v_f$ through $v_{f+1}\otimes\cdots\otimes v_{n_1}$.  This produces the sign $(-1)^{|v_f|(\sum\limits_{i=\alpha+1}^{n_1}|v_i|)}$.  Then move $w_i$ $1\leq i\leq \beta-1$ successively all the way to the right.  Moving $w_i$ gives the sign $(-1)^{|w_i|(|w|-|w_i|)}$.  The product now has the form
\begin{eqnarray*}
   && (-1)^{|v_f|(\sum\limits_{i=\alpha+1}^{n_1}|v_i|)}(-1)^{\sum\limits_{j=1}^{\beta-1}|w_j|(|w|-|w_j|)}v_1\otimes\cdots\otimes v_{n_1}\otimes v_f\otimes w_{f'}\otimes w_{f'+1}\otimes\cdots \\
   && \cdots\otimes w_{n_2}\otimes w_1\otimes\cdots\otimes w_{f'-1}
\end{eqnarray*}
Moving $\mm{id}\otimes\cdots\otimes B\otimes\cdots\otimes \mm{id}$ through the product from left to right gives $(-1)^{|B|(|v|-|v_{f}|)}$.
Finally, move $w_1,\dots,w_{\beta-1}$ all the way to the left.  Moving each $w_l$ for $1\leq l\leq \beta-1$ gives the sign $(-1)^{|w_{\ell}|(|v|+|w|-|v_f|-|w_{f'}|-|w_{\ell}|)}$.  The end result is
\begin{eqnarray}\label{signsEV}
   && (-1)^{|v_f|(\sum\limits_{i=\alpha+1}^{n_1}|v_i|)+\sum\limits_{j=1}^{\beta-1}|w_j|(|w|-|w_j|)+|B|(|v|-|v_{f}|)+\sum\limits_{\ell=1}^{\beta-1}|w_{\ell}|(|v|+|w|-|v_f|-|w_{f'}|-|w_{\ell}|)} \\
   && B(v_f,w_{f'})w_1\otimes\cdots\otimes w_{\beta-1}\otimes v_1\otimes\cdots\otimes v_{n_1}\otimes w_{\beta+1}\otimes\cdots\otimes w_{n_2}, \nn
\end{eqnarray}
which we write as $v\circ_i w$, where $v = v_1\otimes\cdots\otimes v_f\otimes\cdots\otimes v_{n_1}$ and $w = w_1\otimes\cdots\otimes w_{f'}\otimes\cdots\otimes w_{n_2}$.
Now let $G=*_{n,b}^1$.  Then
$\mu_G^{\mc{E}_V}:D(G)\otimes\mc{E}_V((G))\longrightarrow \mc{E}_V((n,b+1))$ is given by
\begin{eqnarray}\label{signsEV2}
   && v_1\otimes\cdots\otimes v_f\otimes\cdots\otimes v_{f'}\otimes\cdots\otimes v_n \nn\\
   &\mapsto& (-1)^{|v_f|(\sum\limits_{i=\alpha+1}^{\beta-1}|v_i|)+|B|((\sum\limits_{j=1}^{\beta-1}|v_j|)-|v_f|)}B(v_f,v_{f'})v_1\otimes\cdots\otimes v_{f-1}\otimes v_{f+1}\otimes\cdots \nn\\
   && \cdots\otimes v_{f'-1}\otimes v_{f'+1}\otimes\cdots\otimes v_n
\end{eqnarray}

\section{Tropical Morse Graphs}

In this section we extend the definition of tropical Morse trees as defined by Abouzaid, Gross, and Seibert in \cite{Clay}, to that of tropical Morse graphs. 

Given an integral affine manifold $\ms{B}$ of dimension $r$, we form the torus bundle $\tau: X(\ms{B})\to \ms{B}$ by factoring the tangent bundle $T\ms{B}$ by the local system $\Lambda$, where for $p\in \ms{B}$, the stalk $\Lambda_p$ of $\Lambda$ is locally generated by $\partial/\partial y_1,\dots,\partial/\partial y_n$, where $y_1,\dots,y_n$ are the affine coordinates of $\ms{B}$.  So in our 1-dimensional case in which $\ms{B}=\mb{R}/d\mb{Z}$, $X(\ms{B})$ is obtained by factoring each fiber $\mb{R}$ of $T\ms{B}$ by the lattice $\mb{Z}\cdot\partial/\partial y$.  See \cite{Clay} for a full account.

\begin{defn}\label{DefOfBZ}
$\ms{B}(\frac{1}{n}\mb{Z})$ is the set of points of the affine manifold $\ms{B}=\mb{R}/d\mb{Z}$ with coordinates in $\frac{1}{n}\mb{Z}$.
\end{defn}

In this section we define tropical Morse graphs and use them to construct Riemann surfaces with boundary on the elliptic curve $X(\ms{B})$.  Each boundary component lies in a finite union of Lagrangian submanifolds $\{L_{n_i}|n_i\in\mb{Z}\}$ of $X(\ms{B})$, the successive intersections of which comprise the corners $\{p_{i,j}\}$ of the Riemann surface.  The indices comprising the pair $(i,j)$ for each $p_{i,j}$ are ordered according to a boundary traversal defined by the surface lying on the left, where $i$ corresponds to $L_{n_i}$.  Note that in the case of a tree, this corresponds to a counterclockwise traversal, and in the case of a graph $G$ with $\mm{H}_1(G)=1$, this corresponds to a counterclockwise traversal of the outer boundary and a clockwise traversal of the inner boundary.  Each moduli space of tropical Morse graphs is indexed by a set of these corners, each being a lift of a point $p\in \ms{B}(\frac{1}{n_j-n_i}\mb{Z})$.

The definition of a \emph{tropical Morse graph} contains as a special case the definition of a \emph{tropical Morse tree} \cite{Clay}, the definition of which builds up on the notion of a \emph{rooted ribbon tree}.  For the sake of clarity, we reproduce the definition of a rooted ribbon tree nearly verbatim from \cite{Ar1}.
\begin{defn}\label{rootedribbontree}
A \emph{rooted ribbon tree} $\mathcal{R}$ is a connected tree with a finite number of vertices and edges, with no divalent vertices, together with the additional data of a cyclic ordering of edges at each vertex and a distinguished vertex referred to as the \emph{rooted vertex}.  We call the univalent vertices of $\mathcal{R}$ \emph{external} and the other vertices \emph{internal}, and insist that the root is an external vertex.  We orient all edges towards the root vertex, referring to the root vertex as the \emph{outgoing vertex} and all other external vertices as \emph{incoming vertices}.  We refer to edges adjacent to a vertex $v$ as \emph{incoming edges} if the assigned direction points towards $v$ and \emph{outgoing edges} otherwise.  Let $\mathcal{R}$ be a rooted ribbon tree with $d+1$ external vertices, for $d>0$.  There is a unique isotopy class of embeddings of $\mathcal{R}$ into the unit disk $D\subset \mb{R}^2$ such that each external vertex maps to $S^1\subset D$.  This embedding divides $D$ into $d+1$ regions, each of which meets the boundary $S^1$ in a segment.  We label the regions by $0,\dots,d$, proceeding counterclockwise around $S^1$ from the root vertex.  Given distinct integers $n_0,\dots,n_d$, assign to each edge $e$ the integer $n_e:=n_j-n_i$, called the \emph{acceleration} of $e$, where $e$ lies between regions $i$ and $j$.  A ribbon tree whose edges are labelled with accelerations is called \emph{decorated}.  We also label each external vertex of a decorated ribbon tree by $v_{ij}$, where the unique edge incident to the vertex lies between regions $i$ and $j$, and we take the indices $i$ and $j$ modulo $d+1$.  Note that the vertex labels are $v_{01},v_{12},\dots,v_{d-1,d},v_{d,0}$.
\end{defn}

\begin{defn}\label{TMGDef2}
Let $\mathcal{R}$ be a ribbon tree with $d+1$ external vertices, $d>0$, and $n_0,\dots,n_d\in \mb{Z}$ be the set of integers describing a decoration on $\mathcal{R}$.  Identify each edge $e$ of $\mathcal{R}$ with $[0,1]$ with coordinate $s$ and the orientation on $e$ pointing from $0$ to $1$.  A \emph{tropical Morse tree} is a map $\phi:\mathcal{R}\to S^1$ satisfying the following:

\begin{itemize}
\item[(1)] For any external vertex $v_{ij}$ of $\mathcal{R}$,
\begin{equation*}
p_{ij}:=\phi(v_{ij})\in S^1\left(\frac{1}{n_j-n_i}\mb{Z}\right)
\end{equation*}
\item[(2)]  For an edge $e$ of $\mathcal{R}$, $\phi(e)$ is either an affine line segment or a point in $S^1$. 
\item[(3)] For each edge $e$, there is a section $\tb{v}_e\in\Gamma(e,(\phi|_e)^*TS^1)$, called \emph{velocity of $e$}, satisfying 
\begin{enumerate}
\item[i.)] $\tb{v}_e(v)=0$ for each external vertex $v$ adjacent to the edge $e$.
\item[ii.)] For each edge $e\simeq [0,1]$ and $s\in[0,1]$, we have $\tb{v}_e(s)$ is tangent to $\phi(e)$ at $\phi(s)$, pointing in the same direction as the orientation on $\phi(e)$ induced by that on $e$.  By identifying $(\phi|_e)^*TS^1$ with the trivial bundle over $e$ using the affine structure on $S^1$, we have 
\begin{equation*}
\frac{d}{ds}\tb{v}_e(s)=n_e\phi_*\frac{\partial}{\partial s}
\end{equation*}
\item[iii.)] For any internal vertex $v$ of $\mathcal{R}$ the following balancing condition holds.  Let $e_1,\dots,e_p$ be the incoming edges and let $e_{\mm{out}}$ be the outgoing edge adjacent to $v$.  Then 
\begin{equation}
\tb{v}_{e_{\mm{out}}}(v) = \sum_{i=1}^p\tb{v}_{e_i}(v)
\end{equation}
\end{enumerate}
\end{itemize}
\end{defn}

A \emph{metric ribbon graph} $G$ is a connected graph with a finite number of vertices and edges, with no divalent vertices, together with the additional data of a cyclic ordering of edges at each vertex, but unlike rooted ribbon trees, there is no distinguished vertex.  The notions of \emph{external} and \emph{internal} edges are identical to those of Definition \ref{rootedribbontree}.  The external verticies are organized into $b+1$ cycles $\{\s_k\}$ with $\s:=\s_1\cdots\s_{b+1}$.  This unique factorization of $\s$ into cycles defines the topology of the corresponding Riemann surface, where each cycle $\s_k$ corresponds to a boundary of that surface.  For each $k$, $1\leq k\leq b+1$, each vertex comprising $\s_k$ maps to a copy of $S^1$, one copy for each $k$.

\begin{defn}\label{TMGDef}
Let $ G_{n,b,\s}^{\mm{trop}}(p_{1,2},p_{2,3},\dots,p_{n,1})$ denote the set of continuous maps $\phi:G\longrightarrow \ms{B}$ from metric ribbon graphs $G$ of genus $b$ and $n$ external legs to the affine manifold $\ms{B}$.  Each leg is labeled by $n_j-n_i$ for $n_i,n_j\in\mb{Z}$ and each internal edge $e$ is labeled by an integer $n_e$.  The endpoints of the legs are labeled by points $p_{i,j}$ as described above.  Each graph is decorated with a collection of affine displacement vectors $\tb{v}_e\in\Gamma(e,(\phi|_e)^*T\ms{B})$, satisfying the following properties:

\begin{itemize}
\item[(0)] The points $p_{ij}$ are organized into $b+1$ cycles $\{\s_k\}$ with $\s:=\s_1\cdots\s_{b+1}$.  This unique factorization of $\s$ into cycles defines the topology of the corresponding Riemann surface. 
\item[(1)] For any external vertex $v_{ij}$ of $G$, $$p_{ij}:=\phi(v_{ij})\in \ms{B}\left(\frac{1}{n_j-n_i}\mb{Z}\right)$$
\item[(2)] For an edge $e$ of $G$, $\phi(e)$ is either an affine line segment or a point in $\ms{B}$.
\item[(3)] For an edge $e$, there is a section $\tb{v}_e\in\Gamma(e,(\phi|_e)^*T\ms{B})$, called \emph{velocity of} $e$, satisfying  
\begin{enumerate}
\item[i.)] $\tb{v}_e(v)=0$ for each external vertex $v$ adjacent to the edge $e$. 
\item[ii.)] For each edge $e\simeq[0,1]$ and $s\in[0,1]$, we have $\tb{v}_e(s)$ is tangent to $\phi(e)$ at $\phi(s)$, pointing in the same direction as the orientation on $\phi(e)$ induced by that on $e$.  By identifying $(\phi|_e)^*T\ms{B}$ with the trivial bundle over $e$ using the affine structure on $\ms{B}$, we have $$\frac{d}{ds}\tb{v}_e(s)=n_e\phi_*\frac{\partial}{\partial s},$$ where $n_e\in\mb{Z}$. 
\begin{rem}
If $\ms{B}=\mb{R}^n/M$ for some lattice $M\subseteq\mb{Z}^n$, then this equation can be solved as $\tb{v}_e(s)=\tb{v}_e(0) +n_e(\phi(s)-\phi(0))\mod M$.
\end{rem}
\item[iii.)] Fix an orientation on $G$.  For any internal vertex $v$ of $G$ the following balancing condition holds.  Let $e_1,\dots,e_p$ be incoming edges adjacent to $v$ and let $f_1,\dots,f_q$ be the outgoing edges adjacent to $v$.  Then $$\sum_{i=1}^p\tb{v}_{e_i}(v)=\sum_{j=1}^q\tb{v}_{f_j}(v)$$
\item[iv.)] The length of an edge $e$ in $G$ coincides with $$\frac{1}{n_e}\log\left(\frac{\tb{v}_e(1)}{\tb{v}_e(0)}\right),$$ where each external edge has infinite length.  Since $\tb{v}_e(0)$ and $\tb{v}_e(1)$ are proportional vectors pointing in the same direction, their quotient make sense as a positive number.  There is one special case:  if $e$ is an external edge that is contracted by $\phi$, then $\tb{v}_e(0)=\tb{v}_e(1)=0$, but we still take the length to be infinite. 
\end{enumerate}
\end{itemize}

In this paper all external legs are oriented inward.  We have $G_{n,0,\s}^{\mm{trop}}=S_{n-1}^{\mm{trop}}$, where $\s$ is a single cycle and $S_{n-1}^{\mm{trop}}$ is the space of tropical Morse trees as defined in \cite{Clay}.
\end{defn}
\begin{rem}Internal edges are not allowed to collapse.  More specifically, if $\phi:G\longrightarrow \ms{B}$ is such that the balancing condition at a certain vertex forces $\phi|_e$ to be constant, then the domain $G$ is redefined by contracting $e$.  We also require at least one of the $n_e$ for $e$ external to be nonzero.  The reason for this is explained Section \ref{modulispacedegenerationsection}.
\end{rem}


\subsection{Holomorphic Polygons and Annuli}

\begin{defn}
Let $n\in\mb{Z}$, $\ms{B}=\mb{R}/d\mb{Z}$, and $X(\ms{B})=T\ms{B}/\Lambda$.  Define the section $\s_n:\ms{B}\longrightarrow X(\ms{B})$ via the local formula $$\s_n(y)=(y,-ny)$$
The Lagrangians $L_n$ of the torus bundle $\tau: X(\ms{B})\to \ms{B}$ are defined as the images of $\ms{B}$ under the sections $\s_n$.  Note the Lagrangians $L_n$ can be generalized to any integral affine manifold $\ms{B}$.

\end{defn}



Let $G$ be a Tropical Morse Graph and let $e\in \mm{Edge}(G)$.  As stated in part 4 of Definition \ref{TMGDef}, the vector $\tb{v}_e(s)$ has the form $\tb{v}_e(s)=\tb{v}_e(0)+n_e(\phi(s)-\phi(v))$.  If $e$ is external, then $\tb{v}_e(0)=0$ and $\phi(v)=p_{i,j}$.  Since $p_{i,j}\in \ms{B}(\frac{1}{n_e}\mb{Z})$, we have $n_e p_{i,j}\in\mb{Z}$ and this gives $\tb{v}_e(s)=n_e\phi(s)\mod\Lambda$.  Call this last equality the $Lagrangian$ $condition$ for the edge $e$.  In the tree case, the balancing conditions force $\textbf{v}_e(s)=n_e\phi(s)\mod\Lambda$ for the internal edges.  Intuitively, this is the statement that the tail and head of $\tb{v}_e(s)=\tb{v}_e(0)+(n_j-n_i)\phi(s)$ sit on $L_{n_i}$ and $L_{n_j}$, respectively.

In the graph case, one must impose the Lagrangian condition on an arbitrary edge within each generator of $\mm{H}_1(G)$.  The balancing conditions then force the condition on each edge comprising each generator.

Before we construct our holomorphic disks and annuli from tropical Morse graphs in detail, it is necessary for us to understand how these graphs are labeled.

\subsubsection{Labeling of Tropical Morse Graphs}


\begin{figure}[h]
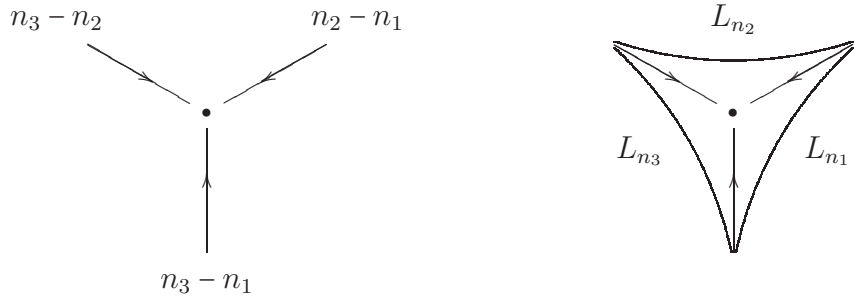

\[  \xygraph{
!{<0cm,0cm>;<1cm,0cm>:<0cm,1cm>::}
!{(0,0)}*+{_{\bullet}}="a"
!{(1.7321,1)}*+{}="b"
!{(0.5774,0.3333)}*+{}="a_mid"
!{(-1.7321,1)}*+{}="c"
!{(-0.5774,0.3333)}*+{}="c_mid"
!{(0,-2)}*+{}="d"
!{(0,-0.6666)}*+{}="d_mid"
!{(-7,0)}*+{_{\bullet}}="aa"
!{(-6.4226,0.3333)}*+{}="aa_mid"
!{(-5.2679,1)}*+{}="bb"
!{(-8.7321,1)}*+{}="cc"
!{(-7.5774,0.3333)}*+{}="cc_mid"
!{(-7,-2)}*+{}="dd"
!{(-7,-0.6666)}*+{}="dd_mid"
!{(-5,1.25)}*+{n_2-n_1}="e"
!{(-9,1.25)}*+{n_3-n_2}="f"
!{(-7,-2.25)}*+{n_3-n_1}="g"
!{(1.25,-0.5)}*+{L_{n_1}}="ee"
!{(0,1.25)}*+{L_{n_2}}="ff"
!{(-1.25,-0.5)}*+{L_{n_3}}="gg"
"b":"a_mid" "c":"c_mid" "d":"d_mid" "bb":"aa_mid" "cc":"cc_mid" "dd":"dd_mid"
"a"-"b" "a"-"c" "a"-"d"
"b"-@/_0.3cm/"d" "b"-@/^0.3cm/"c" "c"-@/^0.3cm/"d"
"aa"-"bb" "aa"-"cc" "aa"-"dd"
}  \]
\caption{The left-hand figure is the domain of the tropical Morse graph $u:G\longrightarrow B$ and the right-hand
figure is its fatgraph.  We see that as the boundary of the fatgraph is traversed in the counterclockwise direction,
the interior of the
fatgraph lies on the left, and the Lagrangians are encountered in the order $(L_{n_j}$, $L_{n_i})$, where the corresponding
edge on the left is labeled $n_j-n_i$.  If, or example, the upper right-hand edge were oriented outward, the labeling
of the edge would change to $n_1-n_2$.  The same holds for the remaining two edges.}
\label{fatgraphlabeling}
\end{figure}


\begin{defn} The fatgraph of a graph $G$ is a thickening of the edges and legs of $G$.  One replaces the edges with rectangles and glues them according to the given cyclic order at each vertex to obtain a surface of two real dimensions.
\end{defn}\label{fatgraphorientation}

For the sake of understanding the labeling of the edges of a tropical Morse graph, consider its fatgraph.  The fatgraph is homeomorphic to the Riemann surface with boundary to be constructed with the boundaries contained in Lagrangians.  With this in mind, label the boundaries of the fatgraph by $\{L_{n_i}\}$ for integers $\{n_i\}$ as follows.

Let $G$ be the domain of a tropical Morse graph.  Orient the boundary of the corresponding fatgraph so that as the boundary is traversed, the interior lies on the left.  For example, if the fatgraph is a disk then this is the counterclockwise orientation.  Each external vertex bounding an external leg corresponds to a point of intersection between two Lagrangians.  This means that this external vertex is mapped to a point of intersection between two Lagrangians in $X(\ms{B})$.  The legs therefore partition the boundary of the fatgraph into segments, each labeled by a Lagrangian $L_{n_j}$ for some $j$.  If an external vertex $v$, labeled by $p_i$, is a transition point from $L_{n_i}$ to $L_{n_j}$, and the relevant leg (external edge) is oriented away from $v$, then label that leg with $n_j-n_i$.  If the leg is oriented toward $v$, then label the leg with $n_i-n_j$.  See Figure \ref{fatgraphlabeling} for an illustration of this labeling scheme.

Similarly, label the internal edges according to which boundary segments lie on either side of the 2-dimensional strip defined by a given edge.  Let $e$ be such an edge bounded by segments labeled $L_{n_i}$ and $L_{n_j}$.  As illustrated in Figure \ref{pinch}, pinch $L_{n_i}$ and $L_{n_j}$ to some point on the interior of $e$, and label each half-edge according to the rule for legs.  This is a well-defined labeling of $e$ since the edge maintains its orientation through the pinch, and therefore flows toward the pinch on one side, and away from the pinch on the other.

\begin{figure}[h]
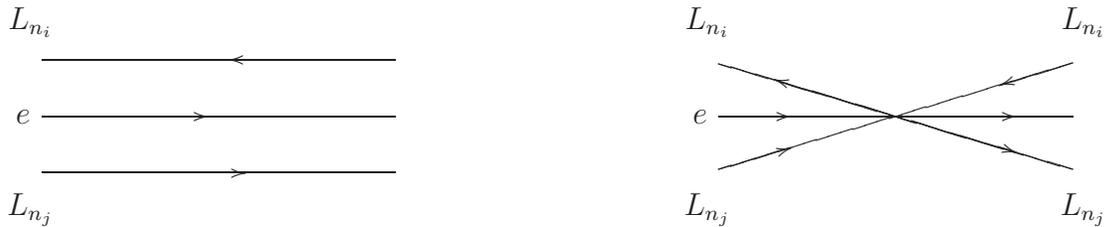

\[  \xygraph{
!{<0cm,0cm>;<1cm,0cm>:<0cm,1cm>::}
!{(-5,-0.75)}*+{}="a"
!{(0,-0.75)}*+{}="b"
!{(-2,-0.75)}*+{}="m" 
!{(-5,-1.25)}*+{L_{n_j}}="g"
!{(-5,0.75)}*+{}="c"
!{(0,0.75)}*+{}="d"
!{(-5,1.25)}*+{L_{n_i}}="h"
!{(-2.5,0.75)}*+{}="l"
!{(-5,0)}*+{}="e"
!{(0,0)}*+{}="f"
!{(-5,0)}*+{}="j"
!{(-2.5,0)}*+{}="k"
!{(-5.1,0)}*+{e}="i"
!{(4,0.75)}*+{}="aa"
!{(4,-0.75)}*+{}="cc"
!{(5.25,-0.375)}*+{}="gg" 
!{(4.75,0.5265)}*+{}="hh" 
!{(8.25,-0.5265)}*+{}="ii" 
!{(7.75,0.375)}*+{}="jj" 
!{(9,-0.75)}*+{}="bb"
!{(9,0.75)}*+{}="dd"
!{(4,0)}*+{}="ee"
!{(9,0)}*+{}="ff"
!{(4,1.25)}*+{L_{n_i}}="p"
!{(9,1.25)}*+{L_{n_i}}="q"
!{(4,-1.25)}*+{L_{n_j}}="r"
!{(9,-1.25)}*+{L_{n_j}}="s"
!{(3.9,0)}*+{e}="s"
!{(5.25,0)}*+{}="t"
!{(8.25,0)}*+{}="u"
"aa"-"bb" "cc"-"dd" "j":"k" "d":"l" "a":"m" "cc":"gg"
"a"-"b" "c"-"d" "e"-"f" "ee"-"ff" "bb":"hh" "dd":"jj" "aa":"ii"
"ee":"t" "ee":"u"
}  \]
\caption{Pinching the strip yields two triangles meeting at a point.  The labeling procedure for legs described above applied to the left half-edge gives $n_j-n_i$, as this half-edge is directed toward the corner (pinch).  Applying the procedure to the right half-edge also gives $n_j-n_i$, as this half-edge is directed away from the corner.}
\label{pinch}
\end{figure}


\subsubsection{Construction of Riemann Surfaces with Boundary}\label{ConstructionOfRS}

The surfaces, including their signs, are constructed from tropical Morse graphs in essentially the same way as holomorphic disks are constructed from tropical Morse trees.  The following construction can be found in the final chapter of \cite{Clay}.

A tropical Morse graph $\phi:G\longrightarrow \ms{B}$ defines a Riemann surface with boundary in the following way.

\begin{flushleft}Consider the map
\end{flushleft}

\begin{eqnarray}\label{holopolygon}
  R_e:e\times[0,1] &\longrightarrow& X(\ms{B})=T(\ms{B})/\Lambda \nn\\
  (s,t) &\mapsto& \s_{n_i}(\phi(s))-t\cdot\textbf{v}_e(s)
\end{eqnarray}
As the parameter $s$ runs through $e$, the vector $t\cdot\textbf{v}_e(s)$ sweeps out a two-dimensional region bounded by $L_{n_i},L_{n_j},\tau^{-1}(\phi(0)),$ and $\tau^{-1}(\phi(1))$.  This is illustrated in Figure \ref{tracing}.

\begin{figure}[h]
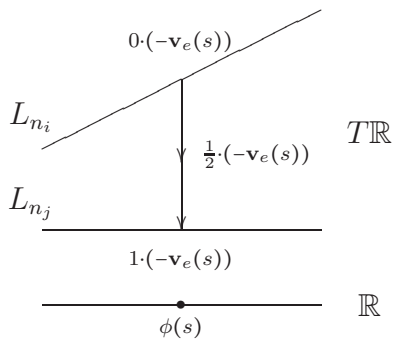

\[  \xygraph{
!{<0cm,0cm>;<1cm,0cm>:<0cm,1cm>::}
!{(-2,0) }*+{}="a"
!{(2,0) }*+{}="b"
!{(-2,1) }*+{}="c"
!{(2,3) }*+{}="d"
!{(0,2.15) }*+{}="e"
!{(0,-0.15) }*+{}="f"
!{(0,.75) }*+{}="g"
!{(-2,1.5) }*+{L_{n_i}}="h"
!{(-2,.35) }*+{L_{n_j}}="i"
!{(0,2.5) }*+{_{0\cdot(-\tb{v}_e(s))}}="j"
!{(1,1) }*+{_{\frac{1}{2}\cdot(-\tb{v}_e(s))}}="k"
!{(0,-.4) }*+{_{1\cdot(-\tb{v}_e(s))}}="l"
!{(-2,-1) }*+{}="m"
!{(2,-1) }*+{}="n"
!{(0,-1) }*+{_{\bullet}}="o"
!{(0,-1.3) }*+{_{\phi(s)}}="p"
!{(2.5,-1) }*+{\mb{R}}="q"
!{(2.5,1.3) }*+{T\mb{R}}="r"
"a"-"b" "c"-"d" "e":"f" "e":"g" "m"-"n"
}  \]
\caption{The tail of the vector is given by $R_e(s,0)=\s_{n_i}(\phi(s))\in L_{n_i} $, and the head
is given by $R_e(s,1)=\s_{n_j}(\phi(s))\in\L_{n_j}$.  See Section \ref{LConditionSection} for justification.}
\label{tracing}
\end{figure}



\begin{defn}\label{degree}
Let $(L_{n_i},L_{n_j})$ be an ordered pair of Lagrangians and let $p\in L_{n_i}\cap L_{n_j}$.  Set
\begin{equation}
    \deg p:=
    \begin{cases}
    1&\text{if $n_j<n_i$}\\
    0&\text{if $n_j>n_i$}
    \end{cases}
    \end{equation}
\end{defn}
Definition \ref{degree} is visualized in terms of Riemann surfaces as follows.  If $p$ is a degree 0 corner, then the surface widens in the direction given by the orientation of the leg bounded by $p$.  If $p$ is degree 1, then the rate at which the length of $\tb{v}_e$ changes, shrinks as $\phi(s)$ passes through $\phi(p)$.

Attached to each tropical Morse graph $u:G\longrightarrow \ms{B}$ is a sign $(-1)^{s(u)}$ defined by Abouzaid in \cite{Ab2} and a real number $\deg(u)$.  Write $p_{i,j}:=u(v_{i,j})$ for a degree-one point on the boundary of the fatgraph $u$ lying in the intersection of $L_{n_i}\cap L_{n_j}$, and such that $L_{n_j}$ immediately follows $L_{n_i}$ when $\partial u$ is traversed with the orientation given above.  Each degree-one point $p_{i,j}$ contributes a sign $(-1)^{s(p_{i,j})}$ as follows.  

Consider an epsilon neighborhood $B_{\epsilon}(p_{i,j})\subset X(\ms{B})$ of $p_{i,j}$, and a lift $\tilde{B}_{\epsilon}(p_{i,j})$ to $\mb{C}$, where the orientation on $\tilde{B}_{\epsilon}(p_{i,j})$ is induced by the natural orientation on $\mb{C}$.  The sign $(-1)^{s(p_{i,j})}$ is positive if the orientation on $L_{n_j}$ induced by the orientation on $\tilde{B}_{\epsilon}(p_{i,j})$ agrees with the given left-to-right orientation, and is negative otherwise.  Then
 \begin{equation}\label{polygonsign}
    (-1)^{s(u)}=\prod_{\{p_{i,j}|\deg p_{i,j}=1\}}(-1)^{s(p_{i,j})}
 \end{equation}
See Figure \ref{triangles} for the defining examples of how to calculate $(-1)^{s(u)}$.

For the degree, set
$$\deg(u) = \sum_{e\in\mm{Edge}(G)\cup\mm{Leg}(G)}\int\limits_e|\tb{v}_e(s)|ds,$$
where $\tb{v}_e\in\Gamma(e,(u|_e)^*T\ms{B})$.  This is the area of the fatgraph in $T\ms{B}$, taking into account any possible wrapping.

\begin{figure}[ht]
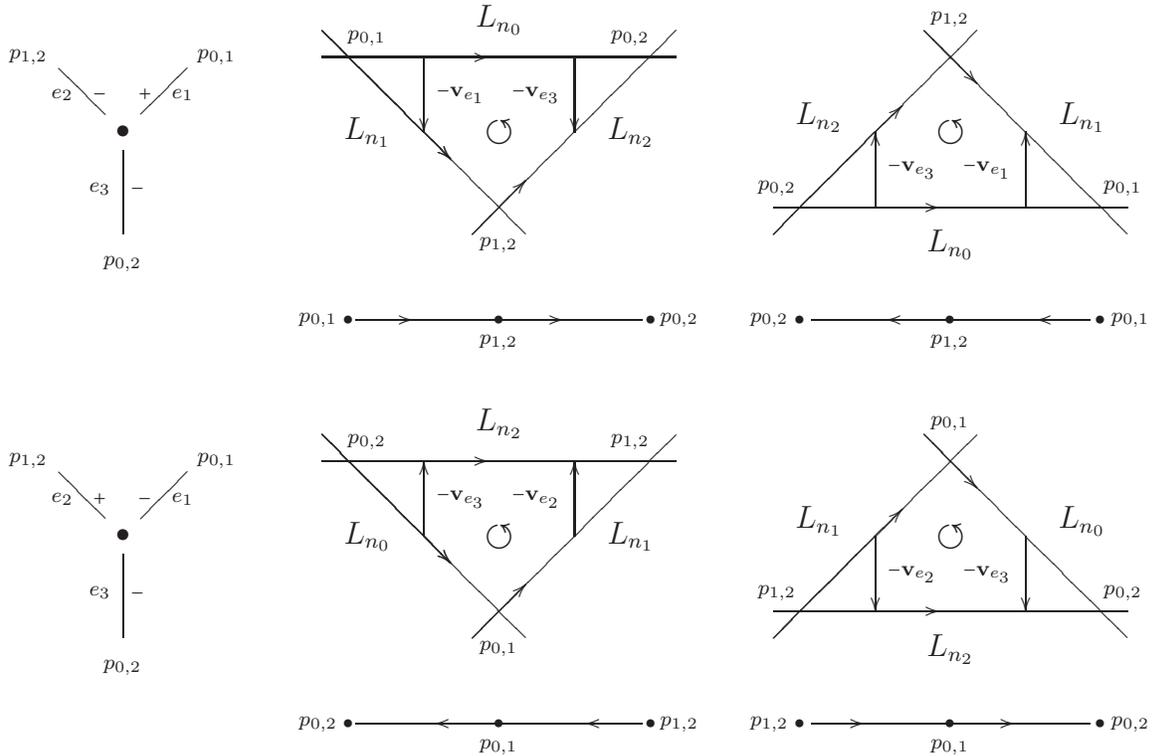

\[  \xygraph{
!{<0cm,0cm>;<1cm,0cm>:<0cm,1cm>::}
!{(-5.5,2.5) }*+{}="a"
!{(-2.5,-.5) }*+{}="b"
!{(-5.5,2) }*+{}="c"
!{(-.5,2) }*+{}="d"
!{(-3.5,-.5) }*+{}="e"
!{(-.5,2.5) }*+{}="f"
!{(-3.5,.5) }*+{}="g"
!{(-2.5,.5) }*+{}="h"
!{(-3,2) }*+{}="i"
!{(-4,2.15) }*+{}="k"
!{(-4,0.85) }*+{}="l"
!{(-2,2.15) }*+{}="m"
!{(-2,0.85) }*+{}="n"
!{(-1.25,1) }*+{L_{n_2}}="x"
!{(-4.75,1) }*+{L_{n_1}}="y"
!{(-3,2.5) }*+{L_{n_0}}="z"
!{(-4.75,2.25) }*+{_{p_{0,1}}}="oo"
!{(-1.25,2.25) }*+{_{p_{0,2}}}="pp"
!{(-3,-.5) }*+{_{p_{1,2}}}="qq"
!{(-5,-1.5) }*+{_{\bullet}}="q"
!{(-5.05,-1.5) }*+{}="qq"
!{(-.98,-1.5) }*+{_{\bullet}}="r"
!{(-.95,-1.5) }*+{}="rr"
!{(-5.4,-1.5) }*+{_{p_{0,1}}}="ss"
!{(-3,-1.8) }*+{_{p_{1,2}}}="tt"
!{(-.6,-1.5) }*+{_{p_{0,2}}}="uu"
!{(-3,-1.5) }*+{_{\bullet}}="s"
!{(-4,-1.5) }*+{}="t"
!{(-2,-1.5) }*+{}="u"
!{(-3.5,1.5) }*+{_{-\tb{v}_{e_1}}}="o"
!{(-2.5,1.5) }*+{_{-\tb{v}_{e_3}}}="p"
!{(-3,1) }*+{\circlearrowleft}="j"
!{(-8,1) }*+{\bullet}="aa"
!{(-9,2) }*+{}="bb"
!{(-7,2) }*+{}="cc"
!{(-8,-.5) }*+{}="dd"
!{(-6.75,2) }*+{_{p_{0,1}}}="ee"
!{(-9.25,2) }*+{_{p_{1,2}}}="ff"
!{(-8,-.75) }*+{_{p_{0,2}}}="gg"
!{(-8.3,.25) }*+{_{e_3}}="jj"
!{(-7.2,1.5) }*+{_{e_1}}="ii"
!{(-8.8,1.5) }*+{_{e_2}}="hh"
!{(-7.7,1.5) }*+{_+}="kk"
!{(-8.3,1.5) }*+{_-}="ll"
!{(-7.8,.25) }*+{_-}="mm"
!{(.5,0) }*+{}="az"
!{(5.5,0) }*+{}="bz"
!{(.5,-.5) }*+{}="cz"
!{(3.5,2.5) }*+{}="dz"
!{(2.5,2.5) }*+{}="ez"
!{(5.5,-.5) }*+{}="fz"
!{(2.5,1.5) }*+{}="gz"
!{(3.5,1.5) }*+{}="hz"
!{(3,0) }*+{}="iz"
!{(3,1) }*+{\circlearrowleft}="jz"
!{(2,1.15) }*+{}="rz"
!{(2,-0.15) }*+{}="sz"
!{(4,1.15) }*+{}="tz"
!{(4,-0.15) }*+{}="uz"
!{(2.5,.5) }*+{_{-\tb{v}_{e_3}}}="ggz"
!{(3.5,.5) }*+{_{-\tb{v}_{e_1}}}="hhz"
!{(1,-1.5) }*+{}="kz"
!{(5.0,-1.5) }*+{}="lz"
!{(1,-1.5) }*+{_{\bullet}}="mz"
!{(5,-1.5) }*+{_{\bullet}}="nz"
!{(3,-1.5) }*+{_{\bullet}}="oz"
!{(2,-1.5) }*+{}="pz"
!{(4,-1.5) }*+{}="qz"
!{(.6,-1.5) }*+{_{p_{0,2}}}="vz"
!{(3,-1.8) }*+{_{p_{1,2}}}="wz"
!{(5.4,-1.5) }*+{_{p_{0,1}}}="xz"
!{(1.25,1.2) }*+{L_{n_2}}="aaz"
!{(4.75,1.2) }*+{L_{n_1}}="bbz"
!{(3,-.5) }*+{L_{n_0}}="ccz"
!{(3,2.5) }*+{_{p_{1,2}}}="ddz"
!{(.7,.25) }*+{_{p_{0,2}}}="eez"
!{(5.3,.25) }*+{_{p_{0,1}}}="ffz"
"az"-"bz" "cz"-"dz" "ez"-"fz" "cz":"gz" "ez":"hz" "az":"iz" "lz":"qz" "oz":"pz" "kz"-"lz" "sz":"rz" "uz":"tz"
"a"-"b" "a":"g" "c"-"d" "e"-"f" "a":"g" "e":"h" "c":"i" "bb"-"aa" "cc"-"aa" "aa"-"dd"
"k":"l" "m":"n" "qq"-"rr" "qq":"t" "s":"u"
}  \]

\[  \xygraph{
!{<0cm,0cm>;<1cm,0cm>:<0cm,1cm>::}
!{(-5.5,2.5) }*+{}="a"
!{(-2.5,-.5) }*+{}="b"
!{(-5.5,2) }*+{}="c"
!{(-.5,2) }*+{}="d"
!{(-3.5,-.5) }*+{}="e"
!{(-.5,2.5) }*+{}="f"
!{(-3.5,.5) }*+{}="g"
!{(-2.5,.5) }*+{}="h"
!{(-3,2) }*+{}="i"
!{(-4,2.15) }*+{}="k"
!{(-4,0.85) }*+{}="l"
!{(-2,2.15) }*+{}="m"
!{(-2,0.85) }*+{}="n"
!{(-1.25,1) }*+{L_{n_1}}="x"
!{(-4.75,1) }*+{L_{n_0}}="y"
!{(-3,2.5) }*+{L_{n_2}}="z"
!{(-4.75,2.25) }*+{_{p_{0,2}}}="oo"
!{(-1.25,2.25) }*+{_{p_{1,2}}}="pp"
!{(-3,-.5) }*+{_{p_{0,1}}}="qq"
!{(-5,-1.5) }*+{_{\bullet}}="q"
!{(-5.05,-1.5) }*+{}="qq"
!{(-.98,-1.5) }*+{_{\bullet}}="r"
!{(-.95,-1.5) }*+{}="rr"
!{(-5.4,-1.5) }*+{_{p_{0,2}}}="ss"
!{(-3,-1.8) }*+{_{p_{0,1}}}="tt"
!{(-.6,-1.5) }*+{_{p_{1,2}}}="uu"
!{(-3,-1.5) }*+{_{\bullet}}="s"
!{(-4,-1.5) }*+{}="t"
!{(-2,-1.5) }*+{}="u"
!{(-3.5,1.5) }*+{_{-\tb{v}_{e_3}}}="o"
!{(-2.5,1.5) }*+{_{-\tb{v}_{e_2}}}="p"
!{(-3,1) }*+{\circlearrowleft}="j"
!{(-8,1) }*+{\bullet}="aa"
!{(-9,2) }*+{}="bb"
!{(-7,2) }*+{}="cc"
!{(-8,-.5) }*+{}="dd"
!{(-6.75,2) }*+{_{p_{0,1}}}="ee"
!{(-9.25,2) }*+{_{p_{1,2}}}="ff"
!{(-8,-.75) }*+{_{p_{0,2}}}="gg"
!{(-8.3,.25) }*+{_{e_3}}="jj"
!{(-7.2,1.5) }*+{_{e_1}}="ii"
!{(-8.8,1.5) }*+{_{e_2}}="hh"
!{(-7.7,1.5) }*+{_-}="kk"
!{(-8.3,1.5) }*+{_+}="ll"
!{(-7.8,.25) }*+{_-}="mm"
!{(.5,0) }*+{}="az"
!{(5.5,0) }*+{}="bz"
!{(.5,-.5) }*+{}="cz"
!{(3.5,2.5) }*+{}="dz"
!{(2.5,2.5) }*+{}="ez"
!{(5.5,-.5) }*+{}="fz"
!{(2.5,1.5) }*+{}="gz"
!{(3.5,1.5) }*+{}="hz"
!{(3,0) }*+{}="iz"
!{(3,1) }*+{\circlearrowleft}="jz"
!{(2,1.15) }*+{}="rz"
!{(2,-0.15) }*+{}="sz"
!{(4,1.15) }*+{}="tz"
!{(4,-0.15) }*+{}="uz"
!{(2.5,.5) }*+{_{-\tb{v}_{e_2}}}="ggz"
!{(3.5,.5) }*+{_{-\tb{v}_{e_3}}}="hhz"
!{(1,-1.5) }*+{}="kz"
!{(5.0,-1.5) }*+{}="lz"
!{(1,-1.5) }*+{_{\bullet}}="mz"
!{(5,-1.5) }*+{_{\bullet}}="nz"
!{(3,-1.5) }*+{_{\bullet}}="oz"
!{(2,-1.5) }*+{}="pz"
!{(4,-1.5) }*+{}="qz"
!{(.6,-1.5) }*+{_{p_{1,2}}}="vz"
!{(3,-1.8) }*+{_{p_{0,1}}}="wz"
!{(5.4,-1.5) }*+{_{p_{0,2}}}="xz"
!{(1.25,1.2) }*+{L_{n_1}}="aaz"
!{(4.75,1.2) }*+{L_{n_0}}="bbz"
!{(3,-.5) }*+{L_{n_2}}="ccz"
!{(3,2.5) }*+{_{p_{0,1}}}="ddz"
!{(.7,.25) }*+{_{p_{1,2}}}="eez"
!{(5.3,.25) }*+{_{p_{0,2}}}="ffz"
"az"-"bz" "cz"-"dz" "ez"-"fz" "cz":"gz" "ez":"hz" "az":"iz" "kz"-"lz" "rz":"sz" "tz":"uz" "mz":"pz" "oz":"qz"
"a"-"b" "a":"g" "c"-"d" "e"-"f" "a":"g" "e":"h" "c":"i" "bb"-"aa" "cc"-"aa" "aa"-"dd"
"l":"k" "n":"m" "qq"-"rr" "s":"t" "rr":"u" 
}  \]
\caption{The contributions, counterclockwise from upper left, are: $+1$, $+1$, $-1$, and $-1$.  Note that even if the outgoing edge $e$ of a vertex $v$ is such that $n_e>0$, the signs $(-1)^{s(u)}$ remain the same.  The only difference is the polygon may not close once $e$ terminates.  The arrow in the segment below each triangle indicates the direction of motion of $\phi(s)\in\mb{R}$.  The corner points $p_{i,j}$ are, as always, ordered according to a boundary traversal defined by the interior lying to the left, which, in this case, results in a counterclockwise ordering of the corners.}
\label{triangles}\end{figure}

\subsection{The Lagrangian Condition}\label{LConditionSection}
\begin{prop}\label{Lcondition}Let $G$ be trivalent.  Imposing the Lagrangian condition on a single edge of each generator of $H_1(G)$ is enough to guarantee the condition holds throughout $G$, as long the condition is imposed once and only once per generator.
\end{prop}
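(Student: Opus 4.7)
The plan is to reinterpret the Lagrangian condition as an integrality constraint on a $1$-cycle in $G$ and then use a spanning tree to reduce the number of independent constraints to $b := \dim \mm{H}_1(G)$. Working in a local lift to the universal cover of $B$, for each edge $e$ I would introduce
$$w_e(s) := \tb{v}_e(s) - n_e\,\phi(s).$$
Condition (4) of Definition \ref{TMGDef} (integrated as in the remark there) says exactly that $w_e$ is independent of $s$, so I may treat $w_e \in \mb{R}$ as a single number attached to $e$. The Lagrangian condition for $e$ is then the assertion $w_e \in \Lambda \simeq \mb{Z}$. For external edges this is automatic: by condition (3) we have $\tb{v}_e(v)=0$, while $\phi(v) = p_{i,j} \in B(\tfrac{1}{n_e}\mb{Z})$, so $n_e \phi(v) \in \mb{Z}$ and $w_e = 0$.

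Next, I would show that the balancing condition at internal vertices promotes to balancing of the $w_e$'s. At a trivalent vertex $v$ bordered by the three Lagrangians $L_{n_1}, L_{n_2}, L_{n_3}$, the incident edge labels are the three cyclic differences $n_j - n_i$ dictated by the fatgraph labeling in Figures \ref{fatgraphlabeling} and \ref{pinch}; with the orientation signs appearing in condition (5) they satisfy $\sum_{e \ni v} \pm n_e = 0$. Substituting $\tb{v}_e(v) = w_e + n_e \phi(v)$ into the balancing equation of condition (5), the $n_e \phi(v)$ contributions cancel and we are left with a balancing identity for the $w_e$'s at $v$. Combined with $w_e = 0$ on all external edges, this identifies $\{w_e\}_{e \in \mm{Edge}(G)}$ with a real $1$-cycle on $G$ supported on internal edges, i.e.\ with an element of $\mm{H}_1(G;\mb{R})$. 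The Lagrangian condition throughout $G$ is then equivalent to this class lying in the integral sublattice $\mm{H}_1(G;\mb{Z})$.

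To finish, I would choose a spanning tree $T \subset G$ containing every external edge; the $b$ non-tree edges $c_1, \dots, c_b$ are in bijection with a basis $\gamma_1, \dots, \gamma_b$ of $\mm{H}_1(G; \mb{Z})$, where $\gamma_i$ is the unique cycle formed by $c_i$ together with a path in $T$. Writing the class of $\{w_e\}$ as $\sum_i w_{c_i}\gamma_i$, the value $w_e$ on any tree edge is a $\mb{Z}$-linear combination of the coefficients $w_{c_i}$; hence $w_{c_i} \in \mb{Z}$ for all $i$ forces $w_e \in \mb{Z}$ for every edge. Given an arbitrary choice of one edge per generator of $\mm{H}_1(G)$, I may take those edges to be the non-tree edges of such a spanning tree, possibly after a $GL(b, \mb{Z})$ change of basis, which yields the proposition.

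The main obstacle is verifying the vertex identity $\sum_{e \ni v} \pm n_e = 0$, since this is what allows the $n_e \phi(v)$ terms to cancel when promoting balancing from $\tb{v}_e$ to $w_e$. This is essentially bookkeeping against the labeling conventions (edges as signed differences of adjacent Lagrangian indices), but the trivalence hypothesis is used here to reduce to the elementary three-term identity $(n_2 - n_1) + (n_3 - n_2) + (n_1 - n_3) = 0$; once this is in hand the remainder is standard homology.
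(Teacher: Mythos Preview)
Your approach is correct in substance but differs markedly from the paper's. The paper proves a local propagation lemma (Lemma~\ref{laglemma}: if all but one edge at a vertex satisfy the Lagrangian condition, so does the last) and then runs a combinatorial induction: it marks one edge per minimal generator together with all external legs, and uses Euler-characteristic counting to show there is always a vertex with two of its three incident edges marked, so Lemma~\ref{laglemma} applies repeatedly until the whole graph is marked. Your argument instead repackages the Lagrangian condition as integrality of the edge-constants $w_e=\tb{v}_e-n_e\phi$, observes that balancing of the $\tb{v}_e$ together with the telescoping identity $\sum_{e\ni v}\pm n_e=0$ forces $\{w_e\}$ to be a $1$-cycle, and then invokes the spanning-tree description of $Z_1(G)=\mm{H}_1(G)$. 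This is more conceptual and in fact does not really need trivalence: the identity $\sum_{e\ni v}\pm n_e=0$ holds at a vertex of any valence since the labels are successive differences $n_{j}-n_{i}$ around the cyclic order, so your proof is slightly more general than the paper's. Two small points to tighten: on an external leg you get $w_e=-n_e\,p_{i,j}\in\mb{Z}$, not $w_e=0$, so work with $\bar w_e:=w_e\bmod\mb{Z}$ when asserting the cycle is supported on internal edges; and your final clause about handling an ``arbitrary choice of one edge per generator'' via a $GL(b,\mb{Z})$ change of basis is not automatic (the incidence matrix of chosen edges against an arbitrary basis of cycles need not be unimodular). The clean statement your method proves is: for the fundamental-cycle basis attached to any spanning tree, imposing integrality on the $b$ non-tree edges suffices. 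That is exactly what the paper uses downstream, so no harm is done, but you should either restrict the claim or add the extra argument.
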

\begin{lem}\label{laglemma}Given an arbitrary vertex $v$ in $G$, if all but one of the attached edges have the Lagrangian condition, then so does the remaining edge.
\end{lem}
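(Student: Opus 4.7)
The plan is to combine the balancing equation at $v$ with the Lagrangian condition on the $k-1$ given edges, reducing the entire statement to a purely combinatorial identity on the edge labels around $v$. By the affine formula $\tb{v}_e(s)=\tb{v}_e(0)+n_e(\phi(s)-\phi(0))$ of Definition \ref{TMGDef}(4), the Lagrangian condition $\tb{v}_e(s)\equiv n_e\phi(s)\pmod{\Lambda}$ on an edge is equivalent to the pointwise congruence $\tb{v}_e(v)\equiv n_e\phi(v)\pmod{\Lambda}$ at any endpoint $v$ of $e$, since $\tb{v}_e(s)-n_e\phi(s)$ is constant in $s$. So I would first reduce the lemma to verifying this pointwise congruence at $v$ for the remaining edge $e_k$.

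Labelling the edges $e_1,\dots,e_k$ incident to $v$ with signs $\epsilon_i=+1$ when $e_i$ is incoming at $v$ and $\epsilon_i=-1$ when outgoing, Definition \ref{TMGDef}(5) reads $\sum_{i=1}^{k}\epsilon_i\tb{v}_{e_i}(v)=0$. Substituting the assumed congruences for $i<k$ and solving for the last term yields
\begin{equation*}
\epsilon_k\,\tb{v}_{e_k}(v)\;\equiv\;-\Bigl(\sum_{i<k}\epsilon_i\,n_{e_i}\Bigr)\phi(v)\pmod{\Lambda},
\end{equation*}
so, since $\epsilon_k=\pm 1$, the lemma is reduced to the purely combinatorial identity
\begin{equation*}
\sum_{i=1}^{k}\epsilon_i\,n_{e_i}\;=\;0.
\end{equation*}
I expect this identity to be the main obstacle.

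To establish it, I would pass to the fatgraph. A small neighbourhood of $v$ there is a disk whose boundary is cyclically decorated, in the orientation with the fatgraph interior on the left, by Lagrangian arcs $L_{m_1},\dots,L_{m_k}$, one sitting between each consecutive pair of edges at $v$. Applying the pinch construction of Figure \ref{pinch} to each internal edge (and directly the leg rule of Section \ref{ConstructionOfRS} to each external leg) identifies the signed label $\epsilon_i\,n_{e_i}$ with the difference $m_j-m_{j-1}$ of the indices of the two neighbouring Lagrangian arcs, once the orientation of $e_i$ is chosen consistently with the cyclic orientation of the disk boundary. The delicate bookkeeping is verifying that reversing the orientation of $e_i$ flips both $n_{e_i}$ and $\epsilon_i$, so that the product $\epsilon_i\,n_{e_i}$ is orientation-independent and unambiguously computed by the pinch rule. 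Once this is checked, $\sum_{i=1}^{k}\epsilon_i\,n_{e_i}=\sum_{j=1}^{k}(m_j-m_{j-1})$ telescopes around the cyclic decoration and vanishes, which closes the argument.
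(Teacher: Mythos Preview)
Your proof is correct and follows essentially the same route as the paper: both combine the balancing condition at $v$ with the affine formula to reduce the claim to the label identity $\sum_i\epsilon_i n_{e_i}=0$, and then invoke the fatgraph labeling scheme. The only difference is that you spell out the telescoping argument for that identity explicitly, whereas the paper simply cites it as part of the definition of a tropical Morse graph.
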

\begin{proof} Orient the attached edges with the Lagrangian condition toward $v$, and the remaining edge, labeled $e$, away from $v$.  Let $\{l_i\}$ be the subset of the attached edges $\mm{E}_v$ which are external legs, and let $\{e_j\}=\mm{E}_v-\{l_i\}-\{e\}$ be the remaining edges.  Note that by the definition of a tropical Morse graph, $\sum n_{l_i}+\sum n_{e_j}=n_e$.  The balancing condition at $v$ yields

\begin{eqnarray}
\tb{v}_e(v) &=& \sum\tb{v}_{l_i}(v)+\sum\tb{v}_{e_j}(v) \\
                  &=& \sum\tb{v}_{l_i}(0)+n_{l_i}(\phi(v)-p_{i,i-1})  + \sum\tb{v}_{e_j}(w_j)+n_{e_j}(\phi(v)-\phi(w_j)) \nn\\
                  &=& (\sum n_{l_i})\phi(v) -\sum n_{l_i}p_{i-1,i} +\sum\tb{v}_{e_j}(w_j)  +(\sum n_{e_j}) \phi(v)  -\sum n_{e_j}\phi(w_j) \nn \\
                  &=& (\sum n_{l_i}+\sum n_{e_j})\phi(v) + \sum(\tb{v}_{e_j}(w_j)-n_{e_j}\phi(w_j)) - \sum n_{l_i}p_{i-1,i}  \nn\\
                  &=& n_e\phi(v) +\sum(\tb{v}_{e_j}(v)-n_{e_j}\phi(v)) -\sum n_{l_i}p_{i-1,i} \mod\Lambda  \nn\\
                  &=& n_e\phi(v) \mod\Lambda\nn
\end{eqnarray}
where $e_j$ is bound by $v$ and $w_j$ and $\tb{v}_{e_j}(w_j)-n_{e_j}\phi(w_j)=\tb{v}_{e_j}(v)-n_{e_j}\phi(v)\mod\Lambda$ for all $e_j\in E_v-\{e\}$ by assumption.
\end{proof}
\begin{proof}(of Proposition \ref{Lcondition})

Let $S=\{c_i\}$ be the set of minimal generators of $H_1(G)$.  Say that an edge or external leg of $G$ is $marked$ if it has the Lagrangian condition.  Say that a generator is marked if at least one of its edges is marked.  Mark each segment comprising $\sum c_i$, and mark every external leg of $G$.  If a given $c_i$ is marked, unmark all but one of its marked edges.  Let $T\subseteq S$ be the set of marked generators, and repeat this process, but with $\sum_{S\setminus T}c_i$.  Continue until all $c_i$ are marked.  The result is that each $c_i$ is marked exactly once.
I claim there exists a vertex with no less than $n(v)-1$ marked edges.

Suppose the claim does not hold for a graph $G$.  For each vertex $v$, let $n_m(v)$ be the number of marked edges emanating from $v$.  Let $n$ be the number of external legs of $G$.  Each marked internal edge is bounded by two vertices, and all external legs are marked, so $$\sum_{\mm{Vert}(G)} n_m(v)=n+2b_1(G)$$  For the sake of contradiction, suppose $n_m(v)\leq n(v)-2$ for all $v\in\mm{Vert}(G)$.  Letting $E=|\mm{Edges}(G)|$ and $V=|\mm{Vert}(G)|$, one has

\begin{eqnarray}
  E-V+1 &=& b_1(G)\nn \\
   &=& \frac{1}{2}(\sum n_m(v)-n)\nn \\
   &\leq& \frac{1}{2}(\sum(n(v)-2)-n) \\
   &=& \frac{1}{2}(\sum n(v)-2V-n) \nn\\
   &=& \frac{1}{2}(2E+n-2V-n) \nn\\
   &=& E-V,\nn
\end{eqnarray}
a contradiction.

So there exists a vertex $v'$ such that $n_m(v')\geq n(v')-1$.  If $v'$ comprises part of a cycle, then it comprises part of a minimal generator.  If $n_m(v')=n(v')=3$, then this minimal generator is marked more than once, which is impossible.  This means that if $v'$ comprises part of a cycle, then $n_m(v')=2$.

If $v'$ does not comprise part of a cycle, then $n_m(v')=2$ as well.  Indeed, edges not comprising cycles are initially unmarked, so $v'$ must be attached to either 2 or 3 external legs.  If $v'$ is attached to 3 external legs, then $G$ must be the unique graph with a single vertex $v'$, no edges, and 3 legs, in which case the proposition is vacuously true.

I claim that upon finding and subsequently marking the remaining unmarked edges of $k$ vertices, each with $n_m(v)=2$, there exists another vertex $v$ with $n_m(v)=2$.  Suppose not.  Then there are $k$ vertices such that $n_m(v)=n(v)=3$ and $V-k$ vertices such that $n_m(v)$ is either 0 or 1.  Each newly marked internal edge counts twice in the sum $\sum_{\mm{Vert}(G)}n_m(v)$, giving \begin{eqnarray*}
                                                           2k+2b_1+n &=& \sum_{\mm{Vert}(G)}n_m(v) \\
                                                            &\leq& 3k+V-k \\
                                                            &=& V+2k,
                                                \end{eqnarray*}
giving $2b_1+n\leq V$.  But $2b_1-2+n=V$ by equation (2.10) of \cite{Mod}, so this is a contradiction.  The entire graph can therefore be inductively marked by finding vertices $v$ with $n_m(v)=2$ and using Lemma \ref{laglemma}.

\end{proof}

\subsection{The Dimension of The Moduli Space $G_{d,b,\s}^{\mm{trop}}$}



\begin{lem}\label{outputlemma}
Let $G$ be a tropical Morse graph, the external legs $l_i$ of which are oriented inward.  Label the endpoints of the external legs by points $p_{i,j}$ as in Definition \ref{TMGDef}.  Other than the external legs, no particular orientation of the internal edges is assumed.  Let $\ell$ be one of the external legs, bounded externally by a vertex $p$, and internally by a vertex $v$.  If $\phi$ maps the external vertices $v_i\neq p$ to $p_{i-1,i}$, then
\begin{equation}\label{output}
  n_{\ell}\phi(p)=\sum_{l_i\neq \ell}n_{l_i}p_{i-1,i}\mod d\mb{Z}
\end{equation}

\end{lem}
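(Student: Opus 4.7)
The key idea is to sum the balancing condition at every internal vertex of $G$ and exploit the conservation law $\sum_{\mathrm{in}} n_e = \sum_{\mathrm{out}} n_f$, valid at each internal vertex (implicit in the Lagrangian labeling and used already in the proof of Lemma~\ref{laglemma}). When these local equations are added together, the contributions from internal edges cancel pairwise, leaving a single linear relation among the data attached to the external legs, from which the lemma follows modulo $d\mb{Z}$.

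First, I would choose a spanning tree $T\subseteq G$ and lift $\phi|_T$ to a continuous map $\tilde\phi\colon T\to\mb{R}$, using chosen lifts $\tilde p_{i-1,i}\in\mb{R}$ of the given $p_{i-1,i}\in B$. For each non-tree edge I extend $\tilde\phi$ continuously along the edge; the extension may disagree with the tree-lift at the second endpoint by an integer multiple of $d$ (the monodromy $m_e d$ around the associated cycle).

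For each edge $e$, I introduce the ``integration constant''
$c_e := \tb{v}_e(s) - n_e\tilde\phi_e(s)$,
which is independent of $s\in e$ by condition~(4) of Definition~\ref{TMGDef}. For an inward-oriented external leg $l$ with external endpoint $v_l$, evaluation at $v_l$ (where $\tb{v}_l(v_l)=0$) gives $c_l = -n_l\tilde\phi(v_l)$; in particular $c_{l_i} = -n_{l_i}\tilde p_{i-1,i}$ and $c_\ell = -n_\ell\tilde\phi(p)$.

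Next, at each internal vertex $v$, rewriting the balancing condition as $\sum_{\mathrm{in}}(c_e+n_e\tilde\phi(v))=\sum_{\mathrm{out}}(c_f+n_f\tilde\phi(v))$ and invoking the conservation law removes the $\tilde\phi(v)$-dependence and yields $\sum_{\mathrm{in\ at\ }v}c_e=\sum_{\mathrm{out\ at\ }v}c_f$. Summing this local identity over all internal vertices, each tree edge cancels (it is incoming at one endpoint and outgoing at the other), each non-tree edge cancels up to a residue of the form $\pm n_e m_e d\in d\mb{Z}$, and each external leg contributes a single $c_l$ at its unique internal endpoint. The resulting congruence
$\sum_{l\ \mathrm{external}} c_l \equiv 0 \pmod{d\mb{Z}}$,
once rewritten using the Step~2 formulas for $c_\ell$ and $c_{l_i}$, is exactly the claimed identity \eqref{output}.

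The principal technical obstacle is the treatment of cycles in $G$: because $\tilde\phi$ cannot be single-valued on a non-simply-connected $G$, the spanning-tree bookkeeping introduces monodromy terms, and these are precisely the integer multiples of $d$ that reduce the identity from an exact equality in $\mb{R}$ (which is what one obtains for trees) to a congruence modulo $d\mb{Z}$. Apart from this bookkeeping, the argument is a direct computation from the balancing condition (5), the ODE (4) for $\tb{v}_e$, and the conservation of the labels $n_e$ at each vertex.
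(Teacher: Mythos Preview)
Your proposal is correct and follows essentially the same approach as the paper: sum the balancing conditions over all internal vertices, invoke the conservation law $\sum_{\mathrm{in}} n_e = \sum_{\mathrm{out}} n_f$ at each vertex to eliminate the $\phi(v)$-terms, and observe that internal-edge contributions cancel pairwise, leaving only the external-leg data. Your packaging via the edge-invariants $c_e = \tb{v}_e(s) - n_e\tilde\phi_e(s)$ and the spanning-tree lift is a bit tidier than the paper's direct manipulation of the sums, and in particular it makes explicit where the ``$\mathrm{mod}\ d\mb{Z}$'' originates (the monodromy around non-tree edges), whereas the paper simply works modulo $d\mb{Z}$ throughout without isolating this point.
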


\begin{proof}

Let $e$ be an arbitrary edge (legs included) of $G$ oriented from a vertex $u$ to a vertex $w$, and let $n_e$ be the integer labeling $e$.  If $e$ is re-oriented from $w$ to $u$, then the integer label changes from $n_e$ to $-n_e$.

Let $u_k$ be an internal vertex and write the set of edges attached to $u_k$ as $\{e_i^k\}\sqcup\{f_j^k\}$, where the $e_i^k$ are oriented toward $u_k$ and the $f_j^k$ are oriented way from $u_k$.  The condition $\sum v_{\mm{in}}=\sum v_{\mm{out}}$ forces  $$\sum_i n_{e_i^k}+\sum_j-n_{f_j^k}=0$$
This is independent of the orientation.  Indeed, let $e$ be an edge oriented toward $u_k$ and let $e'$ be the same edge, but with the opposite orientation.  Then $n_{e'}=-n_e$ and \begin{eqnarray*}
                                                             \sum_{e_i^k\neq e}n_{e_i^k}+\left(\sum_{f_j^k}-n_{f_j^k}\right)-n_{e'} &=& \sum_{e_i^k\neq e}n_{e_i^k}+n_e+\sum_{f_j^k}-n_{f_j^k} \\
                                                              &=& \sum n_{e_i^k}+\sum-n_{f_j^k} \\
                                                              &=& 0
                                                           \end{eqnarray*}

The balancing condition at the vertex $u_k$ then takes the form $\sum \tb{v}_{e_i^k}(1)=\sum \tb{v}_{f_j^k}(0)$,
and summing over $\mm{Vert}(G)$ gives
\begin{equation}\label{Lagsum}
  n_{\ell}(\phi(v)-\phi(p))+\sum_k\sum_i\tb{v}_{e_i^k}(1)=\sum_k\sum_j\tb{v}_{f_j^k}(0)\mod d\mb{Z},
\end{equation}
where $\tb{v}_{\ell}(0)=0$ as $\ell$ is external.  Since $\tb{v}_e(0)=0$ for any external edge $e$, we may assume the outer sum on the right hand side runs over the internal vertices only.  As all external legs are oriented inward, we may also assume that each summand on the right hand side appears on the left hand side as a term comprising a vector $\tb{v}_{e_i^k}(1)$.  Indeed, each $u_k$ is connected, through an edge $f_j^k$ oriented away from $u_k$, to some other internal vertex $u_l$.  This means there is a summand on the left hand side of the form $\tb{v}_{e_i^l}(1)=\tb{v}_{f_j^k}(0)+n_{e_i^l}(\phi(u_l)-\phi(u_k))$.

Equation $\ref{Lagsum}$ takes the form
\begin{equation}\label{Lagsum2}
  n_{\ell}(\phi(v)-\phi(p))+\sum_k\sum_i n_{e_i^k}(\phi(u_k)-\phi(u_l))=0\mod d\mb{Z}
\end{equation}
If $u_{\ell}$ is external, then $\phi(u_{\ell})=p_{i,i-1}$ for some $i$.  Each vertex $u$ appears in the sum exactly $n(u)$ times, once for each edge attached to $u$.  Consider the set of edges $E_k$ attached to a vertex $u_k$.  Let $I_k\subseteq E_k$ be the subset of edges oriented toward $u_k$ and let $O_k=E_k\setminus I_k$ be the set of edges oriented away from $u_k$.

If $e_i^k=f_j^l$, then $n_{e_i^k}=n_{f_j^l}$ and each term $-n_{e_i^k}\phi(u_l)$ can also be written $-n_{f_j^l}\phi(u_l)$.  So we can regroup the terms of the sum $$n_{\ell}\phi(v)+\sum_k\sum_i n_{e_i^k}(\phi(u_k)-\phi(u_l))$$
to obtain
\begin{eqnarray*}
    && -\sum n_lp_{i-1,i}+\sum_k(\sum_{I_k}n_{e_i^k}\phi(u_k)-\sum_{O_k}n_{f_j^k}\phi(u_k)) \\
   &=& -\sum n_lp_{i-1,i}+ \sum_k(\sum_{I_k}n_{e_i^k}-\sum_{O_k}n_{f_j^k})\phi(u_k)\\
   &=& -\sum n_lp_{i-1,i}
\end{eqnarray*}
This gives $n_{\ell}\phi(p)=\sum n_lp_{i-1,i}\mod d\mb{Z}$.

\end{proof}

Lemma \ref{outputlemma} is best understood in terms of the polygon or annulus built from the given tropical Morse graph $G$.  In the notation of the lemma, if $\ell$ is an external leg labeled by $n_{\ell}\neq0$ and bounded externally by $p$, then $\phi(p)$ is determined by $\{\phi(p_i)|p_i\neq p,n_{\ell_i}\neq0\}$, where for each $i$, $p_i$ bounds the external leg $\ell_i$.

In terms of the associated polygon or annulus, this is the statement that the location of the corner $(\phi(p),-n_{\ell}\phi(p))\in T\ms{B}$ is determined up to a lattice shift.  Indeed, if one fixes the locations of all but one corner of a polygon or annulus and fixes the slope of each edge, then the location of the final corner is determined.

If $n_{\ell}=0$, then $\phi(p)$ unrestricted by Equation \ref{output}.  In our definition of tropical Morse graphs, $\phi(p)$ is determined if $\deg \ell=1$ and unrestricted if $\deg \ell=0$.  In either case, $\phi(p)$ is a marked point on $L\cap L$ for some Lagrangian $L$, and does not constitute a corner of the polygon or annulus.  In other words, the location of $\phi(p)$ is not determined by the geometry of the polygon or annulus, as it is in the case that $n_{\ell}\neq0$.



\begin{theorem}\label{DimensionTheorem}
$\dim G_{n,b,\s}^{\mm{trop}}=n-2+2b_1-\sum\deg p_i$, where $p_i$ labels the $i^{th}$ external leg.
\end{theorem}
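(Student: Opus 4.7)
The plan is to compute $\dim G_{n,b,\s}^{\mm{trop}}$ by a direct parameter count inside a fixed combinatorial type of tropical Morse graph, identify the linear dependencies among the constraints using Lemma \ref{outputlemma} and Proposition \ref{Lcondition}, and finally isolate the correction coming from the Floer degrees of the corners. I would first reduce to the trivalent case, since higher-valence vertices correspond to lower-dimensional strata. The trivalent identities give $V=2b_1+n-2$ internal vertices and $E=3b_1+n-3$ internal edges. With the external data $(p_1,\ldots,p_n)$ fixed, the free continuous parameters are the positions $\phi(u)\in B$ of the $V$ internal vertices and the initial displacements $\tb{v}_e(0)\in T_{\phi(0)}B$ of the $E$ internal edges, for a total of $V+E$ real dimensions; conditions (3) and (4) of Definition \ref{TMGDef} determine the displacements on the external legs from this internal data. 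The constraints are the $V$ balancing equations from condition (5) and, by Proposition \ref{Lcondition}, the $b_1$ Lagrangian conditions, one per generator of $H_1(G)$. The naive difference is $V+E-V-b_1=E-b_1=2b_1+n-3$.

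Two corrections bring the count to the stated answer. The first correction, $+1$, comes from Lemma \ref{outputlemma}: summing the balancing equations over all internal vertices produces the output equation, an identity purely in the external data. Consequently, one balancing equation is redundant as a constraint on the internal parameters, provided the $p_i$ satisfy the output equation (otherwise the moduli space is empty), and the count rises to $2b_1+n-2$. The second correction, $-\sum\deg p_i$, comes from analyzing the external legs. Writing the balancing system in a spanning-tree basis and propagating displacements inward from the leaves, one finds that at each external leg $\ell$ the sign of $n_\ell$, equivalently $\deg p_i$, dictates whether the balancing equation at the adjacent internal vertex is already implied by the output equation or is an additional independent constraint: each degree-one corner locks down one further direction, while each degree-zero corner is absorbed into the existing freedoms.

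The main obstacle will be making the second correction rigorous, namely identifying exactly how the degree of each corner changes the rank of the combined linear system. I would handle this by adapting the inductive marking procedure in the proof of Proposition \ref{Lcondition}: mark edges one $H_1$-generator at a time, track which balancing equations remain independent after the output equation is factored out, and match each degree-one external leg with one newly independent balancing equation. This bookkeeping runs parallel to the argument for Proposition \ref{Lcondition}, and when combined with the preceding paragraph yields $\dim G_{n,b,\s}^{\mm{trop}}=n-2+2b_1-\sum\deg p_i$.
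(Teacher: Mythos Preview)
Your parameter count and the first correction are essentially the paper's argument in dual form: the paper keeps one external image $\phi(p)$ as a variable and shows the $(V+b_1)\times(V+1+E)$ system has full rank via an explicit matrix computation, whereas you fix all external images and use Lemma~\ref{outputlemma} to exhibit one linear dependency among the $V$ balancing equations. Both give $E-b_1+1=n-2+2b_1$. What you are missing here is the independence half: you need that the remaining $V-1$ balancing equations together with the $b_1$ Lagrangian conditions are linearly independent. The paper establishes this by ordering the vertices and edges via a depth-first traversal and showing the resulting square submatrix is upper triangular with determinant $\pm n_\ell$. Your proposal to ``adapt the marking procedure of Proposition~\ref{Lcondition}'' does not obviously yield this; that proposition only propagates the Lagrangian condition, not the rank of the full linear system.

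The more serious gap is your account of the $-\sum\deg p_i$ correction. This has nothing to do with which balancing equations are independent; the balancing equations are linear in $\phi(v)$ and $\tb{v}_e(0)$ and are completely insensitive to the sign of $n_\ell$. The correction comes instead from condition~(4) of Definition~\ref{TMGDef}, which requires $\tb{v}_\ell(s)$ to point in the direction of motion. On an external leg with $\tb{v}_\ell(0)=0$ and $n_\ell<0$ (i.e.\ $\deg p_i=1$), the vector $\tb{v}_\ell(s)=n_\ell(\phi(s)-p_i)$ points opposite to $\phi'(s)$ unless $\phi$ is constant on $\ell$; hence the leg is contracted and $\phi(v)=p_i$ for the adjacent internal vertex. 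Each degree-one corner therefore imposes one additional equality constraint outside the linear system, while degree-zero corners impose none. This is the paper's argument, and it is much simpler than the spanning-tree bookkeeping you outline.
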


\begin{proof}

Recall that a \emph{marked} edge is one that has the Lagrangian condition.  Let $S$ be the set of marked edges of $G$.  The balancing conditions and the Lagrangian conditions give rise to a
$$(b_1+|\mm{Vert}(G)|)\times (|\mm{Vert}(G)|+1+|\mm{Edge}(G)|)$$matrix $A_G$,
whose columns are indexed by the elements
$$\{\phi(v)|v\in\mm{Vert}(G)\}\cup\{\phi(p)\}\cup\{\tb{v}_e(0)|e\in\mm{Edge}(G)\},$$
and whose rows are indexed by $\mm{Vert}(G)\cup \{l_e|e\in S\}$.  So the columns of $A_G$ correspond to the variables of the moduli space $G_{n,b,\s}^{\mm{trop}}$ not fixed by the initial data, and the rows correspond to the constraints on those variables.  Since $b_1+|\mm{Vert}(G)|=|\mm{Edge}(G)|+1$, $A_G$ will have maximal rank if the
square submatrix given by the rightmost $|\mm{Edge}(G)|+1$ columns is invertible.  Denote this square matrix by $B_G$.

The first row of $B_G$ corresponds to the balancing condition at the unique internal vertex $u$ which is connected to $p$.  Order the remaining rows and all of the columns as follows.  From $u$, choose one of the edges $e$ connected to $u$, and traverse that edge until reaching the vertex which bounds it on its opposite end.  The balancing condition at this second vertex gives the second row, and the second column is indexed by $\tb{v}_e(0)$.  If $e$ comprises part of a loop in $\mm{H}_1(G)$, then continue in this way around the minimal loop containing $e$.  If not, then continue along an arbitrary path until it
terminates with a vertex which is connected to an external leg.  For the next vertex, backtrack from the current vertex until reaching the most recent junction, and move along the untraced edge to the next vertex.  This new edge marks the next column, and the new vertex marks the new row.  Continue in this way until all internal edges of $G$ have been traversed. The marked edges will be those traversed last within their respective minimal loops.


\begin{ex}
\end{ex}
\begin{figure}[H]
\[  \xygraph{
!{<0cm,0cm>;<1cm,0cm>:<0cm,1cm>::}
!{(0,0) }*+{_{\bullet}}="a"
!{(1,1) }*+{_{\bullet}}="b"
!{(1.75,1.75) }*+{}="c"
!{(0.25,1.75) }*+{}="d"
!{(-1,1) }*+{_{\bullet}}="e"
!{(0,2) }*+{_{\bullet}}="f"
!{(-.75,2.75) }*+{}="r"
!{(1,3) }*+{_{\bullet}}="g"
!{(0.25,3.75) }*+{}="h"
!{(1.75,3.75) }*+{}="i"
!{(-2,2) }*+{_{\bullet}}="j"
!{(-2,3) }*+{_{\bullet}}="k"
!{(-1.25,3.75) }*+{}="l"
!{(-3,3) }*+{_{\bullet}}="m"
!{(-4,4) }*+{_{\bullet}}="n"
!{(-3.25,4.75) }*+{}="o"
!{(-4.75,4.75) }*+{}="p"
!{(0,-1) }*+{_p}="q"
!{(0,-2) }*+{}="qq"
!{(-.25,0) }*+{_{v_1}}="aa"
!{(-1.25,1) }*+{_{v_2}}="bb"
!{(-2.25,2) }*+{_{v_3}}="cc"
!{(-3.25,3) }*+{_{v_4}}="dd"
!{(-2,3.25) }*+{_{v_5}}="ee"
!{(-4.25,4) }*+{_{v_6}}="ff"
!{(-.25,2) }*+{_{v_7}}="gg"
!{(.75,3) }*+{_{v_8}}="hh"
!{(.75,1) }*+{_{v_9}}="ii"
!{(-.75,.25) }*+{_{e_1}}="aaa"
!{(-1.75,1.25) }*+{_{e_2}}="bbb"
!{(-2.75,2.25) }*+{_{e_3}}="ccc"
!{(-2.5,3.25) }*+{_{e_4}}="ddd"
!{(-1.75,2.5) }*+{_{e_5}}="eee"
!{(-3.75,3.25) }*+{_{e_6}}="fff"
!{(-.25,1.25) }*+{_{e_7}}="ggg"
!{(.75,2.25) }*+{_{e_8}}="hhh"
!{(.75,.25) }*+{_{e_9}}="iii"
!{(.25,-.45) }*+{_{e_{10}}}="jjj"
!{(3.5,-.45) }*+{}="kkk"
"a"-"b" "b"-"d" "b"-"c" "a"-"q" "a"-"e" "e"-"f" "f"-"r" "f"-"g" "g"-"h" "g"-"i" "e"-"j" "j"-"k" "k"-"l" "j"-"m" "m"-"n" "n"-"o" "n"-"p" "m"-"k"
}  \]

\begin{center}\tiny{
\begin{tabular}{ c| c c c c c c c c c c c c c c c c c c c c c c c c c }
 & $\phi(p)$ & $\tb{v}_{e_1}(0)$ & $\tb{v}_{e_2}(0)$ & $\tb{v}_{e_3}(0)$ & $\tb{v}_{e_4}(0)$  & $\tb{v}_{e_5}(0)$ & $\tb{v}_{e_6}(0)$ & $\tb{v}_{e_7}(0)$ & $\tb{v}_{e_8}(0)$ & $\tb{v}_{e_9}(0)$    \\
\cline{1-11}
 $v_1$ & $-n_{e_{10}}$ & 1 & 0 & 0 & 0 & 0 & 0 & 0 & 0 & 1 &    \\
 $v_2$ & 0 & -1 & 1 & 0 & 0 & 0 & 0 & 1 & 0 & 0 &     \\
 $v_3$ & 0 & 0 & -1 & 1 & 0 & 1 & 0 & 0 & 0 & 0 &   \\
 $v_4$ & 0 & 0 & 0 & -1 & -1 & 0 & 1 & 0 & 0 & 0 &    \\
 $v_5$ & 0 & 0 & 0 & 0 & 1 & -1 & 0 & 0 & 0 & 0 &   \\
 $v_6$ & 0 & 0 & 0 & 0 & 0 & 0 & -1 & 0 & 0 & 0 &    \\
 $v_7$ & 0 & 0 & 0 & 0 & 0 & 0 & 0 & -1 & 1 & 0 &   \\
 $v_8$ & 0 & 0 & 0 & 0 & 0 & 0 & 0 & 0 & -1 & 0 &    \\
 $v_9$ & 0 & 0 & 0 & 0 & 0 & 0 & 0 & 0 & 0 & -1 &   \\
 $l$ & 0 & 0 & 0 & 0 & 0 & -1 & 0 & 0 & 0 & 0 &     \\
\end{tabular}}
\end{center}
\vspace{1cm}
\caption{$B_G$ for the given $G$.  The vertices and edges are listed in the order in which they are traversed, starting from $v_1$.}\end{figure}

If $A_G$ has maximal rank, i.e. if rank$(A_G)=|\mm{Vert}(G)|+b_1$, then the solution space of this system will have dimension
\begin{eqnarray}\label{rankequality}
  |\mm{Vert}(G)|+1+|\mm{Edge}(G)|-(|\mm{Vert}(G)|+b_1) &=& |\mm{Edge}(G)|+1-b_1 \\
   &=& 3(b_1-1)+n+1-b_1 \nn\\
   &=& n-2+2b_1,\nn
\end{eqnarray}
where the second equality requires the trivalency of $G$.

Let $A_{ij}$ be the entry of $A$ that lies in the $i$th row and the $j$th column, and let $\tilde{A}_{ij}$ be the submatrix of
 $A$ given by eliminating the $i$th row and the $j$th column.  Then
$$\det A=\displaystyle\sum\limits_{j=1}^n (-1)^{i+j}A_{ij}\cdot\det(\tilde{A}_{ij})=\displaystyle\sum\limits_{i=1}^n (-1)^{i+j}
A_{ij}\cdot\det(\tilde{A}_{ij}).$$

Let $e$ be the marked edge of an arbitrary cycle.  As the Lagrangian condition $l_e$ is $n_e\phi(v)=\tb{v}_e(0)\mod\Lambda$, the corresponding row of $A_G$ is $l_e=(0\,\cdots\, n_e\,\cdots\, 0\,\cdots\, -1_e\,\cdots\,0)$, and considered as a row of $B_G$, takes the form $l_e=(0\,\cdots\, -1_e\,\cdots\, 0)$, where $1_e$ sits in the column corresponding to $\tb{v}_e(0)$.  Then $\det(B_G)=(-1)^{i_e+j_e}(-1_e)(\det (\tilde{B}_G)_{i_ej_e})$, where $-1_e$ sits in the $i_e$th row and $j_e$th column of $B_G$.  The final $b_1$ rows of the matrix $B_G$ all have this form, so

$$\det(B_G)=\pm\det C_G,$$
where $C_G$ is the matrix resulting from eliminating the $i_e$th row and $j_e$th column of each of the last $b_1$ rows of $B_G$.  The sign depends on the location of each $1_e$.

\begin{claim}\label{claim}
The matrix $C_G$ is upper triangular and is such that

\begin{equation*}
    (C_G)_{ii}=
    \begin{cases}
    -n_l&\text{if $i=1$}\\
    \pm1&\text{if $i\neq1$}
    \end{cases}
    \end{equation*}
where $l$ is the external leg to which the exceptional vertex $p$ is attached.
  \end{claim}

\begin{proof}(of claim \ref{claim}) Each edge $e$ is bounded by two vertices, so a column vector marked by $\tb{v}_e(0)$ will
 consist of at most 3 nonzero entries, the third coming from the edge $e$ possibly being marked.

The structure of these column vectors depends on when the edge is traversed in the sequence.  There are three possibilities:
\begin{itemize}
\item[i)] $e$ is not the first edge traversed after having backtracked and is not the final edge traversed in a loop
\item[ii)] $e$ is the first edge traversed after having backtracked
\item[iii)] $e$ is the final edge traversed in a minimal loop
\end{itemize}
These vectors take the forms
$$
\begin{array}{ccccc}
  \left(
  \begin{array}{c}
    * \\
    \pm1 \\
    \mp1 \\
    * \\
    * \\
    * \\
    * \\
  \end{array}
\right) & , & \left(
  \begin{array}{c}
    * \\
    \pm1 \\
    * \\
    \mp1 \\
    * \\
    * \\
    * \\
  \end{array}
\right) & , & \left(
  \begin{array}{c}
 * \\
    \pm1 \\
    * \\
    \mp1 \\
    * \\
    -1 \\
    * \\
  \end{array}
\right)
\end{array}
$$
respectively, where $*$ represents some number of 0 entries.  Indeed, if $e$ is of the first type, then the vertices bounding this edge give rise to successive rows in $B_G$.  If $e$ is the first traversed after having backtracked, then there will be several rows, depending on how many edges were retraced, in between the rows indexed by the vertices bounding $e$.  If $e$ is the final edge traversed in a loop consisting of $n$ edges, then there will be a gap of $n-1$ rows between the nonzero entries.  The $-1$ entry near the end comes from the fact that edges of the final type are exactly the marked edges.

Let $v_i$ be a vertex of $G$.  The nonzero entries of the row indexed by $v_i$ correspond to the edges which are connected to $v_i$.  The first nonzero entry corresponds either to the edge traversed just before $v_i$, or just after.  Because the first column is indexed by $\phi(p)$, the first nonzero entry of the row $v_i$ will lie at or after the $i^{\mm{th}}$ spot.  The matrix $C_G$ is therefore upper triangular.  Since $(B_G)_{11}=(C_G)_{11}=-n_l\neq0$, it remains only to show $(C_G)_{ii}\neq0$ for $2\leq i\leq|\mm{Vert}(G)|$.

Let $e_n$ be an edge of the first type listed above, bounded by $v_n$ and $v_{n+1}$.  Depending on the orientations of the edges connected to $v_{n+1}$, either the vector $\tb{v}_{e_n}(0)$ or $\tb{v}_{e_n}(1)$ contributes to the balancing condition at $v_{n+1}$, so $(B_G)_{(n+1)(n+1)}=\pm1$.

Now let $e_n$ be the first edge traversed after having backtracked, bounded by $v_i$ and $v_j$.  Assume $v_j$ is the one vertex of the two that has not yet been traversed.  The sequence of vertices and edges has the form
$$\cdots\longrightarrow e_{n-1}\longrightarrow v_{j-1}\longrightarrow v_i\longrightarrow e_n\longrightarrow v_j\longrightarrow \cdots,$$where $v_i$ was traversed before $v_{j-1}$, so does not index a new row.  Therefore, $(B_G)_{(j-1)(n)}$ and $(B_G)_{(j)(n+1)}$ are both nonzero.  In other words, if $i$ and $j$ are such that $(B_G)_{ij}$ is the second nonzero entry of the $j^{\mm{th}}$ column, and the $(j+1)$st column is indexed by an edge which is the first to be traversed after having backtracked, then $(B_G)_{(i+1)(j+1)}$ is nonzero as well.

Let $e_n$ be of the third type, and let $e_{n-1}$ be the second to last edge traversed in the given minimal loop.  The final entry of $\tb{v}_{e_{n-1}}(0)$ and the second entry of $\tb{v}_{e_n}(0)$ will lie in the same row, as $e_{n-1}$ and $e_n$ flow, in terms of the order in which they are traversed, through the same vertex.  This means that the second entry of $\tb{v}_{e_n}(0)$ has coordinates $(i,i+k)$, where $k$ is the number of minimal loops traversed, up to and including the one containing $e_n$.  Now, since $(B_G)_{ij}=(C_G)_{(i)(j-k)}$ for $1\leq i\leq|\mm{Vert}(G)|$, where $k$ is the number of minimal loops traversed before column $j$, the second entries of all of the column vectors of $B_G$ will slide into the diagonal upon taking cofactors at the nonzero entries of $B_G$ corresponding to the marked edges.

\end{proof}
Then $\det(B_G)=\pm n_l$ and since $n_l\neq0$ for all external legs $l$, the matrix $B_G$ is invertible, and the dimension of the solution space is $n-2+2b_1$.

Let $l$ be a leg of $G$ and recall that $\tb{v}_l(s)=\tb{v}_l(0)+n_l(\phi(s)-p_i)$ must point in the same direction as
$\phi'(s)$, where $p_i$ labels $l$ in $G$.  If $n_l<0$, then $n_l(\phi(s)-p_i)$ and $\tb{v}_l(s)$ point in opposite
directions, so  $|\tb{v}_l(0)|\geq |n_l(\phi(s)-p_i)|$ for all $s\in[0,1]$.  Since $l$ is a leg of $G$, $\tb{v}_l(0)=0$
by definition, so $\phi$ necessarily contracts $l$ to $p_i$.  The image under $\phi$ of the vertex bounding $l$ on its
opposite end is therefore determined.  If $n_l>0$ the image under $\phi$ of the vertex bounding $l$ on its opposite end is
unrestricted.  Because $\deg p_i=1$ if and only if $n_l<0$, and $\deg p_i=0$ if and only if $n_l>0$, the dimension
of $\dim G_{n,b,\s}^{\mm{trop}}$ is given by
\begin{eqnarray*}
  \dim G_{n,b,\s}^{\mm{trop}} &=& |\mm{Vert}(G)|+1+|\mm{Edge}(G)|-|\mm{Vert}(G)|-b_1-(\sum \deg p_i) \\
   &=& n-2+2b_1-(\sum \deg p_i)
\end{eqnarray*}
\begin{flushleft}
with the second equality coming from \eqref{rankequality}.
\end{flushleft}
\end{proof}



\subsection{Moduli Space Degeneration}\label{modulispacedegenerationsection}

In section \ref{ev} we define vectors $\overline{\mu}_{n,b}$, which will be shown to satisfy what we term the Quantum $A_{\infty}$ - relations, the genus zero case being the familiar $A_{\infty}$ - relations.  The vectors $\overline{\mu}_{n,b}$ will be defined in terms of moduli spaces of tropical Morse graphs, and the Quantum $A_{\infty}$- relations will be proven by examining degenerations of these moduli spaces in the case they are one-dimensional.

The moduli space is one-dimensional precisely when each interior point of the moduli space corresponds to a polygon or annulus with a single non-convex corner.  By \emph{non-convex corner} we mean a corner of a polygon $p$ for which we can find, for any epsilon disk $D$ centered at the corner, a chord $c$ for which $\partial c\subset \partial p$ and $(p\setminus \partial p)\,\cap \,(c\setminus \partial c)=\varnothing$.

The relations between the compositions $\overline{\mu}_{k,b}\circ_i\overline{\mu}_{l,b}$ are obtained by examining degenerations of 1-dimensional moduli spaces of tropical Morse trees and graphs.  These degenerations are best understood by considering the holomorphic polygons and annuli built from the trees and graphs via \ref{holopolygon}.  In \cite{Clay} the 1-dimensional moduli spaces come from the existence of a  non-convex corner in the given polygon.  In the case considered here the Lagrangians emanating from the non-convex corner cut into the interior of the polygon or annulus, and a degeneration occurs when one of the two Lagrangians hits the opposing side, splitting the polygon in two, or the annulus into a polygon.

\subsubsection{Topological Change Through a Vertex}

Consider a quadrivalent vertex of a stable graph and put a cyclic ordering on the half-edges.  Call two half-edges $h$, $h'$ \emph{adjacent} if either $h$ follows $h'$ or $h'$ follows $h$ in the cyclic ordering.  There are only two ways of pairing adjacent half-edges of a quadrivalent vertex.  The topology change in the domain of a 1-dimensional tropical Morse graph that occurs when the image of the free vertex moves past the image of a fixed vertex is determined by these two pairings.  Figure \ref{topologychangetriple} illustrates this.
\newpage
\vspace{1cm}
\begin{center}
\begin{picture}(0,0)
\put(-190,0){\line(1,-1){90}}
\put(-160,-30){\line(1,1){30}}
\put(-130,-60){\line(-1,-1){30}}
\put(-130,-60){\circle*{3}}
\put(-160,-30){\circle*{3}}
\put(-169,-37){$v$}
\put(-125,-59){$v_p$}
\put(-152,-52){$e$}
\put(-195,5){$h_1$}
\put(-130,5){$\ell_1$}
\put(-167,-100){$\ell_2$}
\put(-100,-100){$h_2$}
\put(-145,-35){\vector(1,-1){10}}

\put(-25,-20){\line(1,-1){50}}
\put(-25,-70){\line(1,1){50}}
\put(0,-45.5){\circle*{3}}
\put(-31,-15){$h_1$}
\put(25,-15){$\ell_1$}
\put(-31,-80){$\ell_2$}
\put(25,-81){$h_2$}
\put(5,-45){$_{\phi(v)=\phi(v_p)}$}

\put(100,0){\line(1,-1){90}}
\put(130,-30){\line(-1,-1){30}}
\put(160,-60){\line(1,1){30}}
\put(130,-30){\circle*{3}}
\put(160,-60){\circle*{3}}
\put(133,-27){$v_p$}
\put(150,-66){$v$}
\put(147,-45){$e'$}
\put(95,5){$h_1$}
\put(190,-100){$h_2$}
\put(94,-70){$\ell_2$}
\put(192,-28){$\ell_1$}
\put(134,-44){\vector(1,-1){10}}
\end{picture}
\end{center}

\vspace{1.5cm}
\begin{figure}[ht]
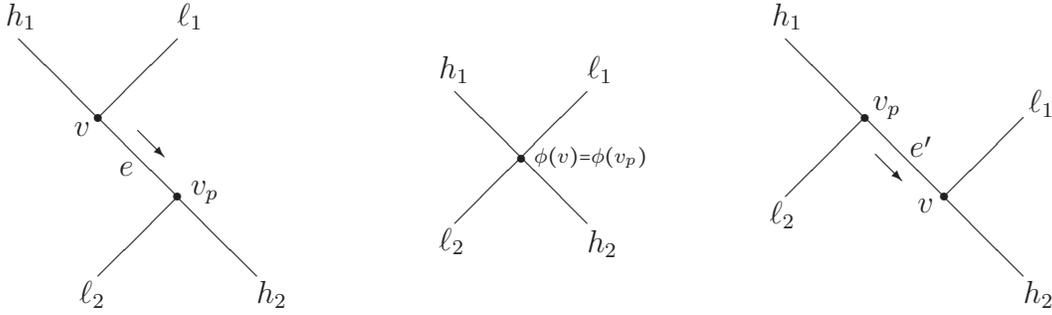

\[  \xygraph{
!{<0cm,0cm>;<1cm,0cm>:<0cm,1cm>::}
}  \]
\caption{As $e$ contracts, the half-edge $h_1$ is paired with $\ell_1$ and $h_2$ with $\ell_2$.  As the new edge $e'$ expands, the second of the two possible pairings is obtained i.e., $h_1$ with $\ell_2$ and $h_2$ with $\ell_1$.}
\label{topologychangetriple}
\end{figure}

Moving $\phi(v)$ toward $\phi(v_p)=p$ in $\ms{B}$ corresponds in the domain to contracting the edge $e$ that joins $v$ and $v_p$.  Similarly, as $\phi(v)$ moves away from $p$ in $\ms{B}$, the edge $e'$ extends in the domain.  The quadrivalent vertex in the domain, which corresponds to both $e$ and $e'$ being fully contracted, appears precisely when $\phi(v)=p$ in $\ms{B}$.

\subsubsection{The genus zero case}

A tropical Morse graph is said to \emph{degenerate} when the length of one of its edges goes to infinity.  Recall part (6) of Definition \ref{TMGDef} defines the length of an edge $e$ of a tropical Morse graph as $\frac{1}{n_e}\mm{log}\left(\frac{\tb{v}_e(1)}{\tb{v}_e(0)}\right)$.
The genus zero case of the degeneration of a one-dimensional moduli space of curves has been thoroughly discussed in \cite{Clay} and various other sources.  The degeneration is realized topologically as a ``bubbling off" of a holomorphic disk from another holomorphic disk, and is described here pictorially as the splitting of a holomorphic polygon in $T\ms{B}$ into two other holomorphic polygons.

For the sake of connecting our intuition to the definition, consider the point of $G^{\mm{trop}}_{4,0}$ on the right-hand side of Figure \ref{ThreePntsOfModuliSpace}.  Note that in this example, the edge bounded externally by $p_{0,3}$ is oriented outward.  Starting counterclockwise from the edge bounded externally by $p_{0,1}$, label the external edges as $e_1,e_2,e_4,e_5$, and label the internal edge $e_3$.  Intuitively, this TMG degenerates when the given triangle in $X(\ms{B})$ splits into two triangles.  This occurs precisely when $|\tb{v}_{e_3}(1)|=0$.  The balancing condition at $v$ implies $\phi(v)=\frac{1}{n_1-n_0}\tb{v}_{e_5}(0)+p_{0,1}\mod\Lambda$.  Because $n_{e_3}<0$, we have
$|e_3| = \frac{1}{n_{e_3}}\mm{log}\left(\frac{\tb{v}_{e_3}(1)}{\tb{v}_{e_3}(0)}\right)\to \infty$ as $\tb{v}_{e_3}(1)\to 0$, so our intuition agrees with the definition of degeneration.

The degenerations come in pairs and result from the existence of a non-convex corner in each point of the interior of the moduli space.

This is illustrated in Figure \ref{ThreePntsOfModuliSpace}.

\newpage

\begin{center}

\begin{picture}(0,0)


\put(-200,-100){\line(1,1){50}}
\put(-200,-100){\line(3,1){55}}
\put(-150,-50){\line(2,-1){63}}
\put(-87,-81.55){\line(-1,0){82}}

\put(-170,-75){$_{\phi(v)}$}
\put(-120,-95){$L_0$}
\put(-120,-60){$L_1$}
\put(-190,-70){$L_2$}
\put(-175,-105){$L_3$}

\put(-210,-105){$_{p_{2,3}}$}
\put(-160,-42){$_{p_{1,2}}$}
\put(-85,-80){$_{p_{0,1}}$}
\put(-150,-90){$_{p_{0,3}}$}

\put(-200,-150){\line(1,0){110}}
\put(-125,-160){\line(-1,0){50}} 
\put(-220,-150){$_{p_{2,3}}$}
\put(-120,-160){$_{p_{0,3}}$}
\put(-85,-150){$_{p_{0,1}}$}
\put(-185,-140){$_{\phi(v)}$}
\put(-150,-140){$_{p_{1,2}}$}
\put(-200,-150){\circle*{3}}
\put(-175,-150){\circle*{3}}
\put(-145,-150){\circle*{3}}
\put(-90,-150){\circle*{3}} 
\put(-175,-160){\circle*{3}} 
\put(-125,-160){\circle*{3}} 

\put(-200,-150){\vector(1,0){15}}
\put(-150,-150){\vector(-1,0){13}}
\put(-90,-150){\vector(-1,0){30}}
\put(-175,-160){\vector(1,0){30}}

\put(-45,-100){\line(1,1){50}}
\put(-45,-100){\line(3,1){55}}
\put(5,-50){\line(2,-1){63}}
\put(68,-81.55){\line(-1,0){68}}

\put(35,-95){$L_0$}
\put(35,-60){$L_1$}
\put(-35,-70){$L_2$}
\put(-20,-105){$L_3$}

\put(-55,-105){$_{p_{2,3}}$}
\put(-5,-42){$_{p_{1,2}}$}
\put(70,-80){$_{p_{0,1}}$}
\put(-15,-75){$_{p_{1,2}=\phi(v)}$}
\put(10,-90){$_{p_{0,3}}$}

\put(-35,-150){\line(1,0){95}}
\put(-55,-150){$_{p_{2,3}}$}
\put(65,-150){$_{p_{0,1}}$}
\put(-15,-140){$_{p_{1,2}=\phi(v)}$}
\put(20,-160){$_{p_{0,3}}$}

\put(-35,-150){\circle*{3}}
\put(60,-150){\circle*{3}} 
\put(0,-150){\circle*{3}} 
\put(25,-150){\circle*{3}} 

\put(-35,-150){\vector(1,0){20}}
\put(0,-150){\vector(1,0){15}}
\put(60,-150){\vector(-1,0){20}}

\put(110,-100){\line(1,1){50}}
\put(110,-100){\line(3,1){75}}
\put(160,-50){\line(2,-1){63}}
\put(223,-81.55){\line(-1,0){58}}

\put(175,-70){$_{\phi(v)}$}

\put(190,-95){$L_0$}
\put(190,-60){$L_1$}
\put(120,-70){$L_2$}
\put(135,-105){$L_3$}

\put(100,-105){$_{p_{2,3}}$}
\put(150,-42){$_{p_{1,2}}$}
\put(225,-80){$_{p_{0,1}}$}
\put(160,-90){$_{p_{0,3}}$}

\put(115,-150){\line(1,0){105}}
\put(195,-160){\line(-1,0){25}}
\put(95,-150){$_{p_{2,3}}$}
\put(150,-160){$_{p_{0,3}}$}
\put(225,-150){$_{p_{0,1}}$}
\put(195,-140){$_{\phi(v)}$}
\put(150,-140){$_{p_{1,2}}$}
\put(115,-150){\circle*{3}}
\put(195,-150){\circle*{3}}
\put(155,-150){\circle*{3}}
\put(220,-150){\circle*{3}} 
\put(195,-160){\circle*{3}} 
\put(170,-160){\circle*{3}} 

\put(115,-150){\vector(1,0){20}}
\put(150,-150){\vector(1,0){30}}
\put(215,-150){\vector(-1,0){10}}
\put(195,-160){\vector(-1,0){15}}


\put(-110,-205){\line(-1,-1){40}}
\put(-150,-205){\line(1,-1){20}}
\put(-170,-225){\line(1,-1){20}}
\put(-150,-245){\line(0,-1){30}}

\put(-115,-200){$_{p_{0,1}}$}
\put(-155,-200){$_{p_{1,2}}$}
\put(-180,-220){$_{p_{2,3}}$}
\put(-155,-280){$_{p_{0,3}}$}

\put(-115,-220){$+$}
\put(-150,-220){$-$}
\put(-170,-240){$+$}
\put(-160,-265){$-$}

\put(-145,-250){$v$}


\put(-10,-220){\line(1,-1){25}}
\put(40,-220){\line(-1,-1){25}}
\put(15,-245){\line(0,-1){30}}
\put(15,-220){\line(0,-1){25}}

\put(5,-215){$_{p_{1,2}}$}
\put(-18,-215){$_{p_{2,3}}$}
\put(8,-280){$_{p_{0,3}}$}
\put(33,-215){$_{p_{0,1}}$}

\put(-10,-235){$+$}
\put(5,-232){$-$}
\put(33,-235){$+$}
\put(5,-265){$-$}

\put(18,-250){$v$}

\put(130,-205){\line(1,-1){40}}
\put(170,-205){\line(-1,-1){20}}
\put(190,-225){\line(-1,-1){20}}
\put(170,-245){\line(0,-1){30}}

\put(125,-200){$_{p_{2,3}}$}
\put(165,-200){$_{p_{1,2}}$}
\put(190,-220){$_{p_{0,1}}$}
\put(165,-280){$_{p_{0,3}}$}

\put(130,-220){$+$}
\put(165,-220){$-$}
\put(185,-240){$+$}
\put(160,-265){$-$}

\put(175,-250){$v$}

\end{picture}


\end{center}

\vspace{8.5cm}
\begin{figure}[ht]
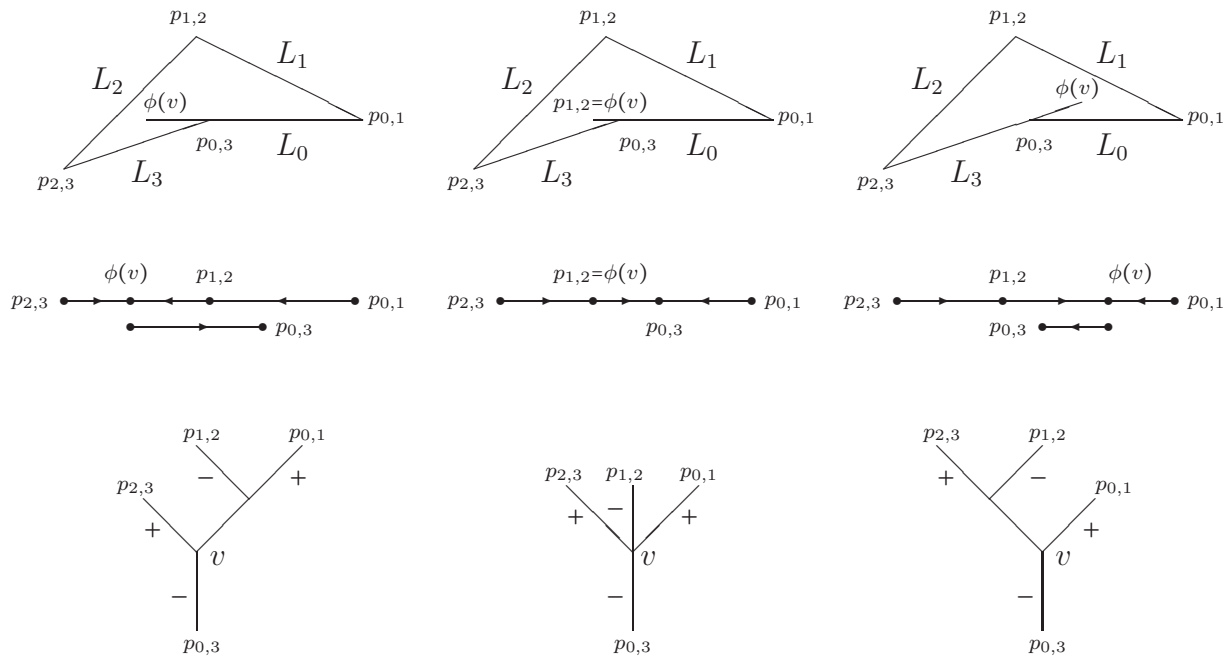

\[  \xygraph{
!{<0cm,0cm>;<1cm,0cm>:<0cm,1cm>::}
}  \]
\caption{Three sample points in the one-dimensional moduli space $G_{4,0}^{\mm{trop}}$.  The bottom row consists of the three domains $\{T_1,T_2,T_3\}$.  The middle row consists of the images $\{\phi(T_i)\}$, and the top row consists of the polygons built from the trees as viewed in $TB$.  The free vertex is labeled by $v$ and the left degeneration occurs when the polygon $(L_0,L_1,L_2,L_3)$ splits into $(L_0,L_1,L_2)$ and $(L_0,L_2,L_3)$.  Similarly for the right degeneration.  Notice the topological change which occurs in the domain, as explained in Figure \ref{topologychangetriple}.}
\label{ThreePntsOfModuliSpace}
\end{figure}

\newpage

\subsubsection{The Genus One Case}

The genus one case is similar to that of the genus zero case in that each degeneration is the result of a Lagrangian, emanating from a non-convex corner, which eventually intersects an opposing edge of the figure built from the given tropical Morse graph, creating two additional corners.  The degeneration on the other end of the moduli space arises similarly from the other Lagrangian which comprises the non-convex corner.  The difference is that instead of a polygon degenerating into two polygons in two different ways, an annulus degenerates into a polygon in two different ways.  This is illustrated in Figure \ref{ThreePntsOfModuliSpaceGenusOne}.


\begin{center}

\vspace{-2cm}
\begin{picture}(0,0)

\put(-200,-100){\line(0,-1){45}}
\put(-200,-145){\line(1,0){110}}
\put(-90,-100){\line(0,-1){45}}
\put(-200,-100){\line(2,-1){55}}
\put(-90,-100){\line(-2,-1){80}}

\put(-210,-95){$_{q_{1,0}}$}
\put(-95,-95){$_{q_{1,0}}$}
\put(-150,-115){$_{p_{0,1}}$}
\put(-140,-155){$_{p_{2,2}}$}
\put(-135,-145){\circle*{3}}
\put(-190,-135){$_{\phi(v)}$}

\put(-180,-105){$L_1$}
\put(-125,-105){$L_0$}
\put(-170,-160){$L_2$}

\put(-200,-195){\line(1,0){110}}
\put(-155,-205){\line(-1,0){25}}
\put(-100,-195){\vector(-1,0){15}}
\put(-130,-195){\vector(-1,0){30}}
\put(-130,-195){\vector(-1,0){62}}
\put(-160,-205){\vector(-1,0){10}}

\put(-210,-185){$_{q_{1,0}}$}
\put(-95,-185){$_{q_{1,0}}$}
\put(-150,-205){$_{p_{0,1}}$}
\put(-140,-185){$_{p_{2,2}}$}
\put(-185,-185){$_{\phi(v)}$}

\put(-200,-195){\circle*{3}}
\put(-90,-195){\circle*{3}}
\put(-180,-205){\circle*{3}} %
\put(-135,-195){\circle*{3}}
\put(-180,-195){\circle*{3}}  %
\put(-155,-205){\circle*{3}}

\put(-145,-270){\circle{180}}
\put(-145,-250){\line(0,-1){20}}
\put(-165,-270){\line(-1,0){20}}
\put(-125,-270){\line(1,0){20}}
\put(-152,-275){$_{p_{2,2}}$}
\put(-200,-270){$_{p_{0,1}}$}
\put(-100,-270){$_{q_{1,0}}$}

\put(-180,-268){$+$}
\put(-120,-268){$-$}
\put(-154,-264){$_0$}
\put(-175,-280){$v$}

\put(-40,-100){\line(0,-1){45}}
\put(-40,-145){\line(1,0){110}}
\put(70,-100){\line(0,-1){45}}
\put(-40,-100){\line(2,-1){65}}
\put(70,-100){\line(-2,-1){55}}

\put(-50,-95){$_{q_{1,0}}$}
\put(65,-95){$_{q_{1,0}}$}
\put(10,-115){$_{p_{0,1}}$}
\put(20,-155){$_{p_{2,2}}$}
\put(25,-145){\circle*{3}}
\put(27,-130){$_{\phi(v)=p_{2,2}}$}

\put(-20,-105){$L_1$}
\put(35,-105){$L_0$}
\put(-10,-160){$L_2$}

\put(-40,-195){\line(1,0){110}}
\put(10,-205){\line(1,0){15}}
\put(70,-195){\vector(-1,0){25}}
\put(70,-195){\vector(-1,0){80}}
\put(10,-205){\vector(1,0){10}}

\put(-50,-185){$_{q_{1,0}}$}
\put(65,-185){$_{q_{1,0}}$}
\put(-10,-205){$_{p_{0,1}}$}
\put(8,-185){$_{p_{2,2}=\phi(v)}$}

\put(-40,-195){\circle*{3}}
\put(70,-195){\circle*{3}}
\put(25,-195){\circle*{3}}
\put(10,-205){\circle*{3}}
\put(25,-205){\circle*{3}}

\put(15,-270){\circle{180}}
\put(-5,-270){\line(1,0){20}}
\put(-5,-270){\line(-1,0){20}}
\put(35,-270){\line(1,0){20}}
\put(8,-275){$_{p_{2,2}}$}
\put(-40,-270){$_{p_{0,1}}$}
\put(60,-270){$_{q_{1,0}}$}

\put(-20,-268){$+$}
\put(40,-268){$-$}
\put(6,-264){$_0$}
\put(-15,-280){$v$}

\put(120,-100){\line(0,-1){45}}
\put(120,-145){\line(1,0){110}}
\put(230,-100){\line(0,-1){45}}
\put(120,-100){\line(2,-1){80}}
\put(230,-100){\line(-2,-1){55}}

\put(110,-95){$_{q_{1,0}}$}
\put(220,-95){$_{q_{1,0}}$}
\put(170,-115){$_{p_{0,1}}$}
\put(180,-155){$_{p_{2,2}}$}
\put(185,-145){\circle*{3}}
\put(200,-135){$_{\phi(v)}$}

\put(140,-105){$L_1$}
\put(195,-105){$L_0$}
\put(150,-160){$L_2$}

\put(120,-195){\line(1,0){110}}
\put(175,-205){\line(1,0){25}}
\put(230,-195){\vector(-1,0){20}}
\put(230,-195){\vector(-1,0){40}}
\put(230,-195){\vector(-1,0){80}}
\put(175,-205){\vector(1,0){15}}

\put(110,-185){$_{q_{1,0}}$}
\put(225,-185){$_{q_{1,0}}$}
\put(155,-205){$_{p_{0,1}}$}
\put(180,-185){$_{p_{2,2}}$}
\put(205,-205){$_{\phi(v)}$}

\put(120,-195){\circle*{3}}
\put(230,-195){\circle*{3}}
\put(200,-205){\circle*{3}}
\put(175,-205){\circle*{3}}
\put(185,-195){\circle*{3}}
\put(200,-195){\circle*{3}}

\put(175,-270){\circle{180}}
\put(175,-290){\line(0,1){20}}
\put(155,-270){\line(-1,0){20}}
\put(195,-270){\line(1,0){20}}
\put(170,-265){$_{p_{2,2}}$}
\put(120,-270){$_{p_{0,1}}$}
\put(220,-270){$_{q_{1,0}}$}

\put(140,-268){$+$}
\put(200,-268){$-$}
\put(167,-277){$_0$}
\put(145,-280){$v$}

\end{picture}\end{center}

\vspace{8cm}
\begin{figure}[ht]
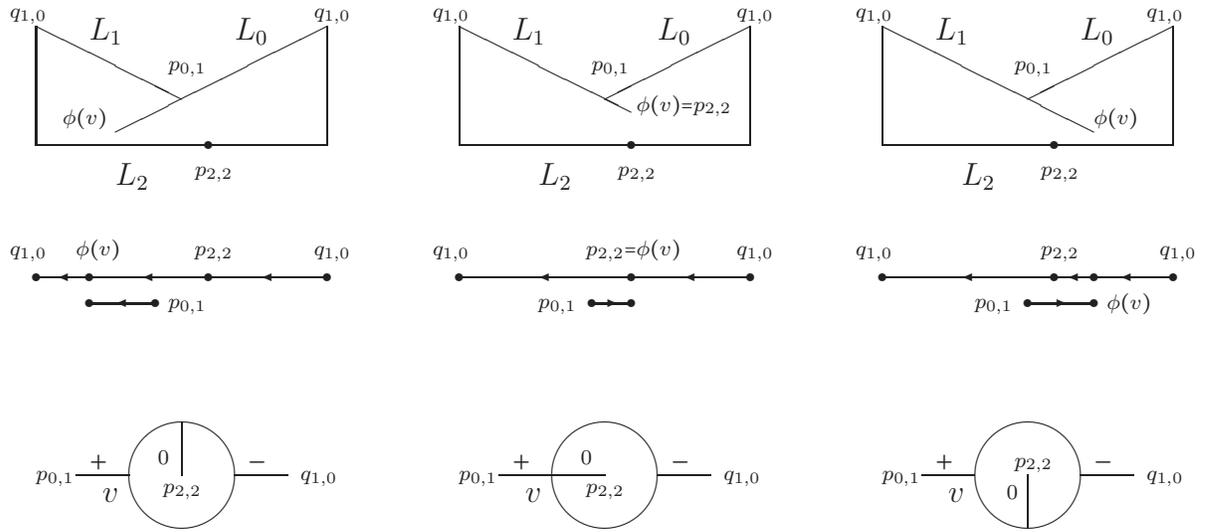

\renewcommand{\figurename}{Figure}
\[  \xygraph{
!{<0cm,0cm>;<1cm,0cm>:<0cm,1cm>::}
}  \]
\caption{Three sample points in the one-dimensional moduli space $G_{3,1}^{\mm{trop}}$.  The free vertex is labeled by $v$ and the left degeneration occurs when the annulus $\partial u=L_0\cup (L_1,L_2)$ breaks into the polygon $(L_0,L_1,L_2)$.}
\label{ThreePntsOfModuliSpaceGenusOne}
\end{figure}



\begin{lem}\label{newdeg1}
The degeneration of an annulus $u:G\longrightarrow B$ with $b_1(G)=1$ creates exactly one additional degree 1 corner and one additional degree 0 corner.
\end{lem}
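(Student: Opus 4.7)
The plan is to use the description of degenerations laid out immediately above the lemma. At an interior point of a one-dimensional moduli space $G^{\mm{trop}}_{n,1,\sigma}$, the associated annulus $u: G \to B$ possesses exactly one non-convex corner $p$, from which two Lagrangians $L_j$ and $L_k$ cut into the interior. A degeneration at either endpoint of the moduli space is realized by extending one of these two Lagrangians — say $L_j$ — until it transversely meets an opposing Lagrangian $L_\ell$ on the boundary of the annulus at a single point $q := L_j \cap L_\ell$; equivalently, by part (6) of Definition~\ref{TMGDef}, the corresponding internal edge of $G$ acquires infinite length in the limit.

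Once the Lagrangian arc from $p$ to $q$ is fully extended, it cuts the annulus along a chord, pinching it into a single polygon whose boundary is obtained by traversing both sides of the chord in sequence. I would then read off the corners of the resulting polygon using the boundary orientation from Section~\ref{ConstructionOfRS}. All corners of the original annulus remain corners of the polygon, while the single new intersection point $q$ contributes exactly two new corners: one corresponding to the ordered transition $(L_j, L_\ell)$ and the other to $(L_\ell, L_j)$, depending on which side of the chord is being traversed.

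The conclusion then follows immediately from Definition~\ref{degree}: a corner $(L_{n_a}, L_{n_b})$ has degree $1$ precisely when $n_b < n_a$ and degree $0$ when $n_b > n_a$. Since the two new corners $(L_j, L_\ell)$ and $(L_\ell, L_j)$ interchange the roles of $n_a$ and $n_b$, they satisfy opposite inequalities, so exactly one has degree $0$ and the other degree $1$. The main obstacle to converting this sketch into a full argument is the careful verification that the pinching actually creates exactly these two corners and no others, together with confirming that the boundary orientation convention of Section~\ref{ConstructionOfRS} indeed identifies the two sides of $q$ as the two opposite orderings of the same Lagrangian pair. This requires tracking the orientation through the pinch and using the balancing conditions to confirm that no other edge of the annulus collapses or acquires a corner simultaneously with the formation of $q$.
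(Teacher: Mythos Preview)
Your geometric argument is correct in spirit and would work, but the paper takes a cleaner and more robust route. Rather than tracking the local structure of the pinch, the paper simply applies the dimension formula of Theorem~\ref{DimensionTheorem} twice: once to the one-dimensional moduli space of annuli ($b_1=1$, $n$ corners, $\dim=1$) and once to the zero-dimensional moduli space of the degenerate polygon ($b_1=0$, $n+2$ corners, $\dim=0$). Subtracting the two resulting equations for $\sum\deg p_i$ immediately yields $\deg q_1+\deg q_2=1$, with no need to identify the two new corners as opposite orderings of a single Lagrangian pair or to verify that no other corners appear. Your approach has the virtue of being self-contained --- it does not invoke the dimension theorem --- and it actually yields slightly more: that the two new corners sit at the \emph{same} underlying point, viewed from opposite sides of the chord, which is precisely the information exploited later in the proof of Lemma~\ref{main}. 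The paper's dimension-counting argument, by contrast, sidesteps entirely the geometric verification you flag as the main obstacle and reduces the lemma to arithmetic.
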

\begin{proof}
Let $u:G\longrightarrow \ms{B}$ be such an annulus and let $\overline{u}:\overline{G}\longrightarrow \ms{B}$ be the polygon to which it degenerates.  We may write the set of corners of $\overline{u}$ as $\{p_i\}_{i=1}^n\sqcup\{q_1,q_2\}$, where $\{p_i\}_{i=1}^n$ is the set of corners of $u$.  Because $\dim G=1$ and $\dim \overline{G}=0$, Theorem \ref{DimensionTheorem} implies
\begin{equation}\label{dim1annulus}
  \sum\deg p_i = n-2 + 2b_1(G)-1
\end{equation}
and
\begin{equation}\label{dim0polygon}
  \sum\deg p_i+\deg q_1+\deg q_2 = (n+2)-2+2b_1(\overline{G})
\end{equation}
Substituting \eqref{dim1annulus} into \eqref{dim0polygon} yields $\deg q_1+\deg q_2 = 1$.  The result follows from the fact that the degree function is nonnegative by definition.
\end{proof}

\begin{lem}\label{main}
If $u,v$ are the two degenerate annuli that bound the one-dimensional moduli space $G_{n,1}^{\mm{trop}}$, then $\deg u=\deg v$ and $s(u)=s(v)$.
\end{lem}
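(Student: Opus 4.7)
My plan is to exploit the one-parameter structure of the moduli space directly. Write $\{u_t\}_{t\in[0,1]}$ for the continuous family of annuli parameterizing $G_{n,1}^{\mm{trop}}$, with $u_0 = u$ and $u_1 = v$; the parameter $t$ records the position of the free vertex whose image is the non-convex corner of $u_t$, and the two endpoints correspond to the two Lagrangians emanating from this corner hitting opposite edges of the annulus.

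For the degree equality $\deg(u) = \deg(v)$, I would argue that $\deg(u_t)$ is in fact independent of $t$. Using the explicit expression $\tb{v}_e(s) = \tb{v}_e(0) + n_e(\phi(s) - \phi(0))$ from part (4) of Definition \ref{TMGDef}, each integral $\int_e |\tb{v}_e(s)|\,ds$ in $\deg(u_t) = \sum_e \int_e |\tb{v}_e(s)|\,ds$ can be written in closed form in terms of $\tb{v}_e(0)$, $\tb{v}_e(1)$, and $n_e$. Differentiating with respect to $t$ and applying the balancing conditions, together with the chain-rule propagation through the subgraph of edges attached to the moving vertex, the contributions from the varying displacement vectors telescope to zero. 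Geometrically this is Stokes' theorem in $TB$: the deformation $u_t \mapsto u_{t+dt}$ adds and removes strips of opposite signed area, whose total is zero because no Lagrangian boundary is crossed while the non-convex corner moves through the interior.

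For the sign equality $s(u) = s(v)$, observe that by Lemma \ref{newdeg1} each degeneration produces exactly one new degree-one corner (call them $q$ for $u$ and $q'$ for $v$) together with one new degree-zero corner, while the original degree-one corners $\{p_{i,j}\}$ of the annulus persist through both degenerations. Hence, by formula \eqref{polygonsign}, the ratio $s(u)/s(v)$ equals $(-1)^{s(q)-s(q')}$. A purely local analysis at the non-convex corner suffices: the two Lagrangians $L_{n_i}, L_{n_j}$ meeting there emanate into opposite sides of the annulus, and $q$ (resp.\ $q'$) is the intersection of $L_{n_i}$ (resp.\ $L_{n_j}$) with an opposing edge. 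Lifting to $\mb{C}$ and comparing the induced boundary orientations in $\epsilon$-neighborhoods of the two new corners — cf.\ the defining examples in Figure \ref{triangles} — one checks case by case, depending on whether the opposing Lagrangian has smaller or larger index, that $(-1)^{s(q)} = (-1)^{s(q')}$.

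The main obstacle is the first step: verifying that $\deg(u_t)$ is constant. This requires careful bookkeeping of how the lengths and displacement vectors of \emph{all} edges of $G$ depend on $t$, not only those adjacent to the moving vertex, since moving the free vertex in $B$ forces coordinated adjustments throughout $G$ via the balancing and Lagrangian conditions (much in the spirit of the determinant computation in Theorem \ref{DimensionTheorem}). The sign analysis by contrast is essentially mechanical, but requires a complete enumeration of local configurations at a non-convex corner to ensure no case is overlooked.
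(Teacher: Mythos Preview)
Your approach is correct in outline and closely parallels the paper's, but in both halves you work harder than necessary because you miss the structural simplifications the paper exploits.

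For $\deg u=\deg v$, the paper does not differentiate $\deg(u_t)$ edge by edge. It simply observes that as $t$ varies, the free vertex slides the non-convex corner along a Lagrangian segment \emph{into the interior} of the annulus; since the boundary of the region in $TB$ is only being redrawn along a line already lying on its boundary, the enclosed area is manifestly unchanged. Your telescoping-derivative computation would prove the same thing, and your Stokes remark is exactly this observation, but the explicit bookkeeping you flag as ``the main obstacle'' is unnecessary once you see it geometrically.

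For $s(u)=s(v)$, the paper avoids your case enumeration by noting one structural fact: in both degenerations the newly created degree-one corner has, as its \emph{following} Lagrangian in the boundary traversal, the same Lagrangian $L_c$---namely the one containing the boundary component that gets split. Since the sign $(-1)^{s(p_{i,j})}$ of a degree-one corner depends only on the orientation of the Lagrangian encountered immediately \emph{after} the corner, and this is $L_c$ in both cases, the two new signs agree automatically. Your local analysis at the non-convex corner would eventually reach the same conclusion, but recognizing that both new degree-one corners sit on a common $L_c$ replaces the case-by-case check with a single sentence.
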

\begin{proof}
Denote by $u_{\phi(w)}$ the image of an arbitrary point of $G_{n,1}^{\mm{trop}}$, where $w\in\mm{Vert}(G)$ is the unique vertex whose image is not determined by the data.  Let $L$ be either of the Lagrangians emanating from the non-convex corner of $u_{\phi(w)}$.  Because $\phi(w)$ extends $L$ from the non-convex vertex of $u_{\phi(w)}$ into the interior of $u_{\phi(w)}$, the area enclosed by $u_{\phi(w)}$ remains constant (see Figure \ref{ThreePntsOfModuliSpaceGenusOne}).  This gives $\deg u=\deg v$.

By Lemma \ref{newdeg1}, image of each degenerate genus 1 graph has one additional degree-zero corner, which does not contribute to the overall sign of the polygon, and one additional degree-one corner.  Let $p,q\in T\ms{B}$ be these new degree-one corners corresponding to a given degenerate pair.  Given either of the images of the two degenerations, the newly created degree-one corner corresponds to the meeting place of $i)$ one of the Lagrangians containing the non-convex corner giving rise to the degeneration, and $ii)$ the Lagrangian containing the boundary component being split, which is the same for both degenerations.  Both $p$ and $q$ therefore lie on the same Lagrangian, which we denote by $L_c$.  The signs of $p$ and $q$ are determined by the agreement or disagreement of the left-to-right orientation of $L_c$ with the orientation induced on $L_c$ by the natural orientation on $\mb{C}$.  In a counterclockwise traversal of the boundaries of the images of the respective degenerate annuli, $L_c$ will be encountered immediately after passing through both $p$ and $q$, yielding the same sign for both $p$ and $q$.  This gives $s(u)=s(v)$.

\end{proof}

\begin{lem}\label{cyclelemma}
Assume $\dim G_{n,1}^{\mm{trop}}=1$ and let $\phi:G\longrightarrow \ms{B}$ be a point of $G_{n,1}^{\mm{trop}}$.  If $v\in\mm{Vert}(G)$ is the unique vertex whose image is not determined by the initial data, then $v$ must belong to the unique cycle of $G$.
\end{lem}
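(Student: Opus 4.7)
The plan is to proceed by contradiction. Assume $v\notin C$, and decompose $G$ as the unique cycle $C$ together with a disjoint union of tree branches attached to cycle vertices; let $T$ be the tree branch containing $v$, attached to $C$ at some $w\in C$ with $v\neq w$. By Theorem \ref{DimensionTheorem}, the hypothesis $\dim G_{n,1}^{\mm{trop}}=1$ forces $\sum_i\deg p_i=n-1$, so exactly one external leg of $G$ carries a degree-$0$ corner while all other legs are contracted.

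The central ingredient I would establish is a rigidity statement for the tree $T$: once $\phi(w)$ and the positions of the degree-$1$ corners attached to legs of $T$ are fixed (these force $\phi(u)$ to equal the corner position at each internal vertex $u\in T$ adjacent to a contracted leg), the balancing conditions at the remaining internal vertices of $T$ uniquely determine $\phi(u)$ for every $u\in T\setminus\{w\}$. I would prove this by adapting the matrix analysis from the proof of Theorem \ref{DimensionTheorem} to $T$ alone: traverse the edges of $T$ outward from $w$ and write down the balancing equation at each vertex encountered. The edge equation $\tb{v}_e(1)=\tb{v}_e(0)+n_e(\phi(\mathrm{end})-\phi(\mathrm{start}))$ makes the resulting square system upper-triangular, and because $T$ is a tree with $b_1(T)=0$ no Lagrangian conditions arise on its internal edges, so the diagonal entries are nonzero factors of the form $\pm n_e$ and the matrix is invertible.

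When the unique degree-$0$ leg of $G$ lies outside $T$, this rigidity claim applies directly: every corner of $T$ is degree $1$, the uniqueness hypothesis forces $\phi(w)$ (and every $\phi(u)$ for $u\neq v$) to be determined, and the rigidity then determines $\phi(v)$ as well, contradicting that $v$'s image is not determined by the initial data. The main obstacle is the remaining case, in which the degree-$0$ leg is necessarily attached to $v$ itself and $\phi(v)$ is genuinely free. Here I would argue that the variation of $\phi(v)$ along the $1$-parameter moduli cannot be localized to $v$ inside the tree: any infinitesimal $\delta\phi(v)\neq 0$ forces $\tb{v}_e(0)$-adjustments that propagate along the unique paths in $T$ from $v$ outward to each leaf (where the contracted legs require $\tb{v}_\ell=0$) and inward to $w$ (where $\phi(w)$ is fixed). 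The tree structure of $T$ makes these terminal constraints linearly independent along distinct paths, so they either force $\delta\phi(v)=0$, contradicting non-triviality of the variation, or induce $\delta\phi(u)\neq 0$ for some $u\in T\setminus\{v\}$, contradicting the uniqueness of $v$. Either outcome is a contradiction, and we conclude $v\in C$.
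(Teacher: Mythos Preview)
Your Case 1 is sound in spirit, though heavier than necessary. The real problem is Case 2. You assert that when the degree-$0$ leg lies in $T$ it is ``necessarily attached to $v$ itself,'' but you give no reason for this, and it is not true in general: the degree-$0$ leg could sit at any vertex of $T$. Your variational argument is also too vague to be a proof: you claim the terminal constraints are ``linearly independent along distinct paths'' and then offer a dichotomy, but neither the independence nor the dichotomy is established. In particular, nothing rules out absorbing $\delta\phi(v)$ entirely into variations of the $\tb{v}_e(0)$ along the edge toward $w$ while keeping every other $\phi(u)$ fixed; showing this is impossible is exactly the content of the lemma, so the argument is circular as written.

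The paper's proof avoids the case split and the linear algebra altogether by exploiting trivalency locally at $v$. Since $v$ is off the cycle, exactly one of its three flags points toward $C$; the other two point away, each being either an external leg or the base of a subtree. Because only one leg in all of $G$ has degree $0$, at least one of these two outward flags is a degree-$1$ leg or leads to a subtree whose external legs are all degree $1$. Now one observes that a degree-$1$ leg is contracted by $\phi$, and that if two adjacent legs are contracted then the internal edge at their common vertex is forced to be contracted as well (from $\tb{v}_e(0)=0$ with $n_e\le 0$, or $\tb{v}_e(1)=0$ with $n_e\ge 0$, one gets $\phi|_e$ constant). Propagating this inward from the leaves, $\phi$ is constant on the entire all-degree-$1$ subtree, and hence $\phi(v)$ equals the position of one of those degree-$1$ corners---determined, a contradiction. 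This argument is uniform in where the degree-$0$ leg sits and requires no rank computation; your matrix approach could presumably be completed, but you would need to handle the full Case 2 (degree-$0$ leg anywhere in $T$) and replace the hand-waving about propagation with an actual rank statement.
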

\begin{proof}
The tropical Morse graph $G$ is trivalent and $n-1$ of the external legs are degree 1, so if $v$ does not belong to the cycle, then at least one of the flags to which $v$ is attached is either a degree 1 external leg, or comprises part of an internal edge that forms the base of a contractible subgraph, all of whose external legs are degree 1.  If two adjacent external legs are degree one, not only are the two legs contracted by $\phi$, but the internal edge to which these legs are attached must be contracted as well.  Indeed, if $n_e\leq0$ and $\tb{v}_e(0)=0$, then $\phi|_e=\text{constant}$, and if $n_e\geq0$ and $\tb{v}(1)=0$, then $\phi|_e=\text{constant}$.  In either case $v$ is fixed by $\phi$, because $\phi$ is constant on the aforementioned contractible subgraph.

\end{proof}

\section{The Fukaya Category}

Recall the definition (\ref{DefOfBZ}) of $\ms{B}(\frac{1}{n}\mb{Z})$.  The spaces $\ms{B}(\frac{1}{n}\mb{Z})$ and $\ms{B}(\frac{1}{-n}\mb{Z})$ are dual by the pairing \begin{equation*}
    <p,q>=
    \begin{cases}
    1&\text{if $p=q$}\\
    0&\text{if $p\neq q$}
    \end{cases}
    \end{equation*}
It is this duality which allows us to construct a modular operad similar to $\mc{E}_V$ (Section \ref{evsings}) out of the Hom-spaces of the Fukaya Category.

\begin{defn}  The objects of the precategory Fuk$(X(\mb{R}/d\mb{Z}))$ are the Lagrangian submanifolds of $X(\mb{R}/d\mb{Z})$ defined by $\s_n(\mb{R}/d\mb{Z})$.  Note that since $X(\mb{R}/d\mb{Z})$ is an elliptic curve, any 1-dimensional submanifold is Lagrangian.
The Hom-spaces of this category, $\mm{Hom}(L_{n_i},L_{n_j})$, are given by
\begin{equation*}
    \mm{Hom}(L_{n_i},L_{n_j})=
    \begin{cases}
    \bigoplus_{p\in \ms{B}(\frac{1}{n_j-n_i}\mb{Z})}[p]\Lambda_{\mm{nov}}\longrightarrow 0&\text{if $n_i<n_j$}\\
    0\longrightarrow\bigoplus_{p\in \ms{B}(\frac{1}{n_j-n_i}\mb{Z})}[p]\Lambda_{\mm{nov}}&\text{if $n_i>n_j$},
    \end{cases}
    \end{equation*}
where the first column is degree 0, the second is degree 1, and $\Lambda_{\mm{nov}}$ is the Novikov ring, i.e., the ring of power series $\sum_{i=1}^{\infty}a_iq^{\lambda_i}$ where $a_i\in\mb{C}$, $\lambda_i\in\mb{R}_{\geq0}$, $\lambda_i\to\infty$ as $i\to\infty$.  A typical element of $\mm{Hom}(L_{n_i},L_{n_j})$ for $\ms{B}(\frac{1}{n_j-n_i}\mb{Z})=\{p_1,\dots,p_n\}$ has the form $$\sum\limits_{i\in\mb{Z}}(a_{p_1})_iq^{(\lambda_{p_1})_i}[p_1]+\cdots+ \sum\limits_{i\in\mb{Z}}(a_{p_n})_iq^{(\lambda_{p_n})_i}[p_n],$$and notice it makes perfect sense to write $p\in\mm{Hom}(L_i,L_j)$ for $p\in L_i\cap L_j$.  The exponents $\lambda$ will be seen in Section \ref{ModOPSection} to be the areas of the polygons comprising the zero-dimensional moduli spaces $G_{n,b,\s}^{\mm{trop}}(p_{1,2},p_{2,3},\dots,p_{n,1})$.  In what follows the numbers $\lambda_{p_i}$ will correspond to the area of a certain polygon in $T\ms{B}$. 

\end{defn}

\subsection{The Perturbation}\label{perturbationsection}

The objects of the new category are pairs $(L,\tb{v})$, where $L$ is a Lagrangian given as before, $\tb{v}$ is a constant section of $T\ms{B}$, and $(L,\tb{v})$ is taken to mean the Lagrangian $L+\tb{v}$ in $T\ms{B}$.  The Hom-space $\mm{Hom}((L_{n_i},\tb{v}),(L_{n_j},\tb{w}))$ is generated over $\Lambda_{\mm{nov}}$ by the points of $\ms{B}(\frac{1}{n_j-n_i}\mb{Z})$, and sits in degree 0 if $n_j>n_i$ and degree 1 if $n_i>n_j$.  If $n_i=n_j$ then
$\mm{Hom}((L_{n_i},\tb{v}),(L_{n_j},\tb{w}))$ is given by the self-dual chain complex
$$\cdots\longrightarrow0\longrightarrow\mb{C}[p]\longrightarrow 0\longrightarrow\cdots$$
where $p$ is an arbitrarily chosen point of $\ms{B}$.

A sequence $\{(L_{n_1},\tb{v}_1),\dots,(L_{n_k},\tb{v}_k)\}$ is transversal if $\tb{v}_1<\cdots<\tb{v}_k$, and
\begin{eqnarray*}
   && \mm{max}\{||\tb{v}_{i+1}-\tb{v}_i|-|\tb{v}_{j+1}-\tb{v}_j||:1\leq i<j\leq k\} \\
   &<& \frac{1}{2}\mm{min}\{d(p,q):p\neq q\,\,\,\mm{and}\,\,\,p,q\in\bigcup_{\substack{i,j\\i\neq j}}\pi((L_{n_i}+\tb{v}_i)\cap (L_{n_j}+\tb{v}_j))\},
\end{eqnarray*}
where $d(p,q)$ is the length of the shorter of the two arcs created by partitioning $\ms{B}$ with $p$ and $q$.

Every point of $\pi((L_{n_i}+\tb{v})\cap (L_{n_j}+\tb{w}))$ can be written $p_{i,j}+(\tb{w}-\tb{v})$ for $p_{i,j}\in \ms{B}(\frac{1}{n_j-n_i}\mb{Z})=\pi(L_{n_i}\cap L_{n_j})$.  The space $G_{k,b}^{\mm{trop}}(p_1+(\tb{v}_2-\tb{v}_1),\dots,p_k+(\tb{v}_{k+1}-\tb{v}_k))$ is defined as usual if $n_i\neq n_j$ for all adjacent Lagrangians $L_{n_i}$ and $L_{n_j}$ in a given cycle.  If $v$ is an external vertex of a graph bounding a leg labeled by $n_{i+1}-n_i=0$, then the image of $v$ under $\phi$ can lie anywhere on $\ms{B}$ if $\deg p+(\tb{v}_{i+1}-\tb{v}_i)=0$, and $\phi(v)=p+(\tb{v}_{i+1}-\tb{v}_i)$ if $\deg p+(\tb{v}_{i+1}-\tb{v}_i)=1$.  Any external leg labeled by $n_e=0$ is contracted.

The transversality condition ensures that the points $p_i+(\tb{v}_{i+1}-\tb{v}_i)$ defining the moduli spaces $G_{k,b}^{\mm{trop}}(p_1+(\tb{v}_2-\tb{v}_1),\dots,p_k+(\tb{v}_{k+1}-\tb{v}_k))$ are always distinct, which ensures the actual and expected dimensions of $G_{k-1,b}^{\mm{trop}}$ are equal.  With these redefined moduli spaces, the category Fuk($X(\ms{B})$) is now a genuine $A_{\infty}$-precategory.  See \cite{Clay} for an exposition of $A_{\infty}$-precategories.

\begin{lem}
The transversality condition ensures that the points $p_i+(\tb{v}_{i+1}-\tb{v}_i)$ defining the moduli spaces
 $G_{k,b}^{\mm{trop}}(p_1+(\tb{v}_2-\tb{v}_1),\dots,p_k+(\tb{v}_{k+1}-\tb{v}_k))$ are always distinct.
\end{lem}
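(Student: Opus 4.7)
The plan is to argue by contradiction using the transversality condition as a bound on how close two distinct points of
\[
S \;:=\; \bigcup_{\substack{a,b\\ a\neq b}} \pi\bigl((L_{n_a}+\tb{v}_a)\cap (L_{n_b}+\tb{v}_b)\bigr)
\]
can be. Suppose to the contrary that $q_i := p_i+(\tb{v}_{i+1}-\tb{v}_i) = p_j+(\tb{v}_{j+1}-\tb{v}_j) =: q_j$ in $B=\mb{R}/d\mb{Z}$ for some indices $i<j$. Rearranging gives
\[
p_i - p_j \;=\; (\tb{v}_{j+1}-\tb{v}_j) - (\tb{v}_{i+1}-\tb{v}_i) \pmod{d\mb{Z}},
\]
and since $\tb{v}_1<\cdots<\tb{v}_k$, each consecutive difference is a positive real, so the magnitude of the right-hand side equals $\bigl||\tb{v}_{j+1}-\tb{v}_j| - |\tb{v}_{i+1}-\tb{v}_i|\bigr|$, which is bounded above by the maximum appearing in the transversality inequality.

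Next I would note that by construction $q_i \in \pi((L_{n_i}+\tb{v}_i)\cap(L_{n_{i+1}}+\tb{v}_{i+1})) \subset S$ and $q_j \in \pi((L_{n_j}+\tb{v}_j)\cap(L_{n_{j+1}}+\tb{v}_{j+1})) \subset S$. Because the shifts $\tb{v}_i,\tb{v}_{i+1},\tb{v}_j,\tb{v}_{j+1}$ are pairwise distinct (by strict monotonicity, the only way two of these indices give the same shift is $i=j$), the pairs of perturbed Lagrangians defining $q_i$ and $q_j$ are genuinely different pairs. The coincidence $q_i=q_j$ therefore forces the single point $q_i$ to lie on four distinct perturbed Lagrangians, from which one extracts points $r\in \pi(L_{n_i}\cap L_{n_j})$ and $r'\in \pi(L_{n_{i+1}}\cap L_{n_{j+1}})$ giving additional members of $S$ via the formulae $r+(\tb{v}_j-\tb{v}_i)=q_i$ and $r'+(\tb{v}_{j+1}-\tb{v}_{i+1})=q_i$.

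The key step is then to exhibit two \emph{distinct} points of $S$ whose separation is exactly $\bigl||\tb{v}_{j+1}-\tb{v}_j|-|\tb{v}_{i+1}-\tb{v}_i|\bigr|$, contradicting the transversality bound. Concretely, I would translate $p_i$ (rather than $p_j$) by $(\tb{v}_{j+1}-\tb{v}_j)$ to land in the same $S$-component as $q_j$; since $p_i\neq p_j$ as unperturbed intersection points whenever the four Lagrangian indices are in generic position, the resulting point and $q_j$ give two distinct elements of $S$ separated by $|p_i-p_j| = \bigl||\tb{v}_{j+1}-\tb{v}_j|-|\tb{v}_{i+1}-\tb{v}_i|\bigr|$, which is strictly less than $\tfrac12\min\{d(p,q)\}<\min\{d(p,q)\}$, the required contradiction.

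The main obstacle will be the degenerate sub-cases where the four Lagrangian labels $n_i,n_{i+1},n_j,n_{j+1}$ are not all distinct, or where $p_i$ and $p_j$ happen to coincide as points of $B$. In the first sub-case one shows that repeated labels collapse the argument to a pair of perturbed Lagrangians already in the list, again producing two close distinct points of $S$. In the second sub-case $p_i=p_j$ forces $|\tb{v}_{j+1}-\tb{v}_j|=|\tb{v}_{i+1}-\tb{v}_i|$, and the shifts $\tb{v}_i,\tb{v}_{i+1},\tb{v}_j,\tb{v}_{j+1}$ can be chosen within the transversality class so that no two consecutive differences coincide (this is a codimension-one condition inside the open transversal region and may be folded into the hypothesis). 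With these cases dispatched, the lemma follows.
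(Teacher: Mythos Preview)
Your contradiction strategy is more elaborate than needed, and the ``key step'' does not go through. You claim that translating the unperturbed point $p_i$ by $(\tb{v}_{j+1}-\tb{v}_j)$ lands in $S$, but $p_i$ lies in $\pi(L_{n_i}\cap L_{n_{i+1}})$, so $p_i+(\tb{v}_{j+1}-\tb{v}_j)$ lies in $\pi\bigl((L_{n_i}+\tb{v}_j)\cap(L_{n_{i+1}}+\tb{v}_{j+1})\bigr)$. The pairs $(L_{n_i},\tb{v}_j)$ and $(L_{n_{i+1}},\tb{v}_{j+1})$ are \emph{not} objects in the transversal sequence unless $n_i=n_j$ and $n_{i+1}=n_{j+1}$, so this point is not in $S$ and cannot be used to violate the minimum in the transversality bound. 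The detour through ``four perturbed Lagrangians meeting at one point'' and the auxiliary points $r,r'$ suffers the same defect. Finally, your handling of the coincidence $p_i=p_j$ by proposing to strengthen the hypothesis (``no two consecutive differences coincide'') is not a proof of the lemma as stated.

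The paper avoids all of this with a direct estimate: writing $p_i'=p_i+(\tb{v}_{i+1}-\tb{v}_i)$ and assuming $p_i\neq p_j$, one simply bounds
\[
|p_i'-p_j'|\;\geq\;|p_i-p_j|\;-\;\bigl||\tb{v}_{i+1}-\tb{v}_i|-|\tb{v}_{j+1}-\tb{v}_j|\bigr|\;>\;|p_i-p_j|-\tfrac{1}{2}\min\{d(p,q)\}\;\geq\;\tfrac{1}{2}\min\{d(p,q)\}>0
\]
by the reverse triangle inequality together with the transversality bound. No auxiliary points in $S$ are constructed; the perturbed points $p_i',p_j'$ themselves already lie in $S$, and one estimates their separation directly rather than trying to manufacture a different close pair.
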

\begin{proof}
Consider two points $p_i\in \pi((L_{n_i}+\tb{v}_i)\cap (L_{n_{i+1}}+\tb{v}_{i+1}))$ and
$p_j\in \pi((L_{n_j}+\tb{v}_j)\cap (L_{n_{j+1}}+\tb{v}_{j+1}))$ for $i<j$.  The result follows
immediately if $p_i=p_j$, so assume $p_i\neq p_j$.

Write $p_i'$ for $p_i+(\tb{v}_{i+1}-\tb{v}_i)$.  We then have
\begin{eqnarray*}
  |p_i'-p_j'| &=& |p_i-p_j+(\tb{v}_{i+1}-\tb{v}_i)-(\tb{v}_{j+1}-\tb{v}_j)| \\
   &=& |p_i-p_j - [(\tb{v}_{j+1}-\tb{v}_j)-(\tb{v}_{i+1}-\tb{v}_i)])| \\
   &\geq& ||p_i-p_j|-|(\tb{v}_{i+1}-\tb{v}_i)-(\tb{v}_{j+1}-\tb{v}_j)|| \\
   &=& ||p_i-p_j|-||\tb{v}_{i+1}-\tb{v}_i|-|\tb{v}_{j+1}-\tb{v}_j||| \\
   &\geq& |p_i-p_j| - ||\tb{v}_{i+1}-\tb{v}_i|-|\tb{v}_{j+1}-\tb{v}_j|| \\
   &\geq& \underset{i,j}{\mm{min}} \{|p_i-p_j|\} - \frac{1}{2}\underset{i,j}{\mm{min}}\{|p_i-p_j|\}  \\
   &=& \underset{i,j}{\mm{min}}\{|p_i-p_j|\} \\
   &>& 0,
\end{eqnarray*}
where the final inequality holds by assumption.
\end{proof}



\section{The Quantum $A_{\infty}$-Relations}

We define the Quantum $A_{\infty}$-relations in terms of an algebra structure over the Feynman Transform of a modular operad, and prove these relations are a generalization of the usual $A_{\infty}$-relations.

\subsection{The Modular Operad $\tilde{\mb{S}}[t]$}

\subsubsection{Motivation}

Recall that we associate a Riemman surface with boundary to each point of the moduli space $G_{n,b,\s}^{\mm{trop}}$.  The operad $\St$ gives a template for algebraically encoding certain aspects of these moduli spaces and their degenerations.   Each closed boundary component is represented as a cycle in a symmetric group $\mb{S}_k$, and  degenerations are represented, depending on the genus, as either the merging or severing of cycles, as described in the definition of the composition $\mu^{\St}$.

\subsubsection{Definition of $\St$}
In \cite{Bar}, the operad $\tilde{\mb{S}}[t]$ was defined homologically with the underlying $\mb{S}$-module $k[S_n]\otimes k[t]$, and $$\tilde{\mb{S}}[t]((n,b))=\bigoplus\limits_{\substack{\s\\b=2g+i_{\s}-1}}k\cdot\s t^g$$ where $\deg\s t^g=-b$ and $i_{\s}$ is the number of disjoint cycles into which $\s$ can be factored.  This operad is redefined cohomologically as follows.

The underlying $\mb{S}$-module remains the same, but $\deg \s t^g$ changes from $-b$ to $n-(-b)$.  By Proposition 2.23 of \cite{Mod}, all compositions $\mu_G^{\St}:D(G)\otimes\St((G))\longrightarrow \St((n,b))$ can be built from compositions parameterized by contracting either the single edge of $G_0=*_{n_1,b_1}\coprod *_{n_2,b_2}$, or the single edge of $G_1=*^1_{n,b}$.  Respectively, these compositions take the forms

\begin{equation}\label{Comp}
    \begin{diagram}
    \node{D(G_0)\otimes\St((n_1,b_1))\otimes\St((n_2,b_2))}\arrow{e,t}{}\node{\St((n_1+n_2-2,b_1+b_2))}
    \end{diagram}
\end{equation}
$$\alpha\otimes(\s t^g\otimes \tau t^{g'})\mapsto \pi_{ff'}(\s\tau(ff'))t^{g+g'}$$
and
\begin{equation}\label{Compo}
    \begin{diagram}
    \node{D(G_1)\otimes\St((n,b))}\arrow{e,t}{}\node{\St((n-2,b+1))}
    \end{diagram}
\end{equation}
$$\beta\otimes \s t^g\mapsto \pi_{ff'}(\s(ff'))t^{g+1}$$
where $\pi_{ff'}$ is defined by
$$\pi_{ff'}:\mm{Aut}(\{1,\dots,n\}\sqcup\{f,f'\})\longrightarrow \mb{S}_n$$
$$(i_1\cdots \,i_{k}fj_1\cdots j_{\ell}f')\mapsto (i_1\cdots \,i_{k}j_1\cdots j_{\ell})$$
and $\alpha$ and $\beta$ are oriented twists, derived below.

In the first case $\St((n_1,b_1))\otimes\St((n_2,b_2))$ sits in degree $(n_1+n_2)+(b_2+b_1)$ and
$\St((n_1+n_2-2,b_1+b_2))$ sits in degree $n_1+n_2-2+b_1+b_2$.  The requirement that $\deg\mu_{G_0}^{\St}=0$ means that the source should be shifted two places to the left.  For this purpose, we twist by
 \begin{eqnarray*}\label{MappingG0}
   \mc{K}^{-2}\mathcal{B}(G_0) &\simeq& \mb{C}[-2]\otimes\mb{C}[b_1=0] \\
    &\simeq& \mb{C}[-2],
 \end{eqnarray*}
where the cocycles $\mc{K}$ and $\mc{B}$ were defined in Example \ref{CocycleExample}.

In the second case, $\St((n,b))$ should be shifted one place to the left.  Again, we twist by
 \begin{eqnarray*}\label{MappingG1}
   \mc{K}^{-2}\mathcal{B}(G_1) &\simeq& \mb{C}[-2]\otimes\mb{C}[b_1=1] \\
    &\simeq& \mb{C}[-1]
 \end{eqnarray*}

The maps \eqref{Comp} and \eqref{Compo} do not produce any signs, so $\St$ is in fact a modular $D=\mc{K}^{-2}\mathcal{B}$-operad.

\subsection{Construction of the Quantum $A_{\infty}$-Relations}

The Quantum $A_{\infty}$ relations are defined via the construction of a natural morphism of modular operads 
$\Omega:\mc{F}_D\gamma\St\longrightarrow \mc{E}_V$.

\subsubsection{Motivation and General Structure of $\Omega:\mc{F}_D\gamma\St\longrightarrow \mc{E}_V$}

The stable $\mb{S}$-module underlying $\St$ is a sequence of graded spaces $\{\St((n,b))\}$ satisfying the stability condition \ref{StableSModDef}, where each complex $\St((n,b))$ is generated in part by permutations $\s\in\mb{S}_n$.  To each permutation $\s$, there corresponds the collection of all possible permutations $\{\tilde{\s}_{ij}\}$ such that $\mu^{\St}_{H_i}(\tilde{\s}_{ij})=\s$, where $H_i\longrightarrow *_{n,b}^l$ is a contraction of stable graphs for each $i$. 

 The Feynman differential $d_{\mc{F}}$, by employing the adjoint maps $(\mu_{H_i}^{\St})^*$, extracts $\{\tilde{\s}_{ij}\}$ from $\s$, and the morphism from the Feynman Transform of $\St$ to the modular operad $\mc{E}_V$ provides the structure needed to define relations between vectors $\{v_k\}\subset\mc{E}_V((n,b))$.  
 
 When this structure is applied to moduli spaces of Riemann surfaces with boundary lying on Lagrangian submanifolds of a given Calabi-Yau manifold, the Feynman differential counts all moduli spaces of Riemann surfaces which degenerate to the Riemann surface corresponding to $\s$.  
 
 The above can be expressed concisely as $\Omega\circ d_{\mc{F}}=d_{\mc{E}_V}\circ \Omega$.

\subsubsection{The Twist of $\St$}

We showed in Section \ref{evsings} (reproduced for completeness from Section 3 of \cite{Bar}) that $\mc{E}_V$ is a modular $\mc{K}^{-1}\otimes\bigotimes_{\mm{Edge}(G)}{\bigwedge}^2\mb{C}^{\{s_e,t_e\}}$-operad.  In this section we will show that  $\mc{F}_D\gamma\St$ is a modular operad of the same twist. 

Set $\gamma((n,b)) = sgn_n[-n+b-2]$.  We include $sgn_n$ in order to give the Feynman transform the appropriate orientation, derived in the following lemma.





\begin{lem}
$\gamma\St$ is a modular $\mc{K}^2\mathcal{T}$ where $\mathcal{T}$ is defined by $\mathcal{T}(G) = \bigotimes_{\mm{Edge}(G)}\bigwedge^2\mb{C}^{\{s_e,t_e\}}$.
\end{lem}
\begin{proof}
 $\St$ is a modular $\mc{K}^{-2}\mc{B}$-operad, so $\gamma\St$ is a modular $D_{\gamma}\mc{K}^{-2}\mc{B}$-operad.  We must therefore show $D_{\gamma}\mc{K}^{-2}\mc{B}=\mc{K}^2\mathcal{T}$.    
 
Let $E=|\mm{Edge}(G)|$, $V=|\mm{Vert}(G)|$, and $n=|\mm{Leg}(G)|$.
For any trivalent graph $G$ with $b(v)=0$ for all $v\in\mm{Vert}(G)$, we have
 \begin{eqnarray*}
   \sum_{\mm{Vert}(G)}n(v) &=& n+2E \\
    &=& n+2(b_1-1+V), 
 \end{eqnarray*}
where the second equality follows from Proposition \ref{EdgeVsVertices}.  We can identify $sgn_n[n]$ with $\mm{Det}(\mm{Leg}(v))$, where $\mm{Det}(S):=\mm{Det}(\mb{C}^S)=\bigwedge^{|S|}(\mb{C}^S)[|S|]$ as in Section 2.1 of \cite{Bar}.  Indeed, the representation 
$$\mb{S}_n\times {\bigwedge}^{|S|}(\mb{C}^S)\longrightarrow {\bigwedge}^{|S|}(\mb{C}^S)$$
$$(\s,\ell_1\wedge\cdots\wedge \ell_{|S|})\mapsto \ell_{\s(1)}\wedge\cdots\wedge \ell_{\s(|S|)}$$
is given by checking the parity of $\s$ for all $\s\in\mb{S}_n$.  The following equalities therefore hold at the level of $\mb{S}_n$-representations.  Recall Definition \ref{CocycleExample}, and write $b$ for $b_1+\sum_{\mm{Vert}(G)} b(v)$.  Then
 \begin{eqnarray}\label{taococycle}
   D_{\gamma}\mc{K}^{-2}\mc{B}(G) &=& D_{\gamma}(G)\mc{K}^{-2}(G)\mc{B}(G) \nn\\
    &=& \gamma((n,b))\otimes\bigotimes\gamma^{-1}((n(v),b(v)))\otimes\mb{C}[-2E]\otimes\mb{C}[b_1] \nn\\
    &=& \mb{C}[-2E+b_1]\otimes sgn_n[-n+b-2]\otimes\bigotimes_{\mm{Vert}(G)}(sgn_{n(v)}[-n(v)+b(v)-2])^* \nn\\
    &=& \mb{C}[-2E+b_1-n+b-2+\sum n(v)-\sum b(v)+2V]\otimes{\bigwedge}^n\mb{C}^{\mm{Leg}(G)}  \nn\\
    && \otimes\,\,(\bigotimes_{\mm{Vert}(G)}{\bigwedge}^{n(v)}
    \mb{C}^{\mm{Leg}(v)})^* \nn\\
    &=& \mb{C}[2E]\otimes{\bigwedge}^n\mb{C}^{\mm{Leg}(G)}\otimes({\bigwedge}^{\sum n(v)}\mb{C}^{\mm{Flag}(G)})^* \nn\\
    &=& \mc{K}^2(G)\otimes{\bigwedge}^n\mb{C}^{\mm{Leg}(G)}\otimes({\bigwedge}^{2E+n}\mb{C}^{\mm{Flag}(G)})^* \nn\\
    &=& \mc{K}^2(G)\otimes{\bigwedge}^n\mb{C}^{\mm{Leg}(G)}\otimes({\bigwedge}^n\mb{C}^{\mm{Leg}(G)})^*\otimes
    ({\bigwedge}^{2E}\mb{C}^{\mm{Flag}(G)\setminus\mm{Leg}(G)}) \nn\\
    &=& \mc{K}^2(G)\otimes({\bigwedge}^{2E}\mb{C}^{\mm{Flag}(G)\setminus\mm{Leg}(G)}) \nn\\
    &=& \mc{K}^2(G)\otimes\bigotimes_{\mm{Edge}(G)}{\bigwedge}^2\mb{C}^{\{s_e,t_e\}} \nn \\
    &=& \mc{K}^2(G)\otimes \mathcal{T}(G)
 \end{eqnarray}
 \end{proof}

The operads $\mc{F}_D\gamma\St$ and $\mc{E}_V$ are therefore modular
\begin{eqnarray}
  (D_{\gamma}\mc{K}^{-2}\mc{B})^{\vee}  &=& \mc{K}(D_{\gamma}\mc{K}^{-2}\mc{B})^{-1} \nn \\
   &=& \mc{K}(\mc{K}^2 \mathcal{T})^{-1} \nn\\
   &=& \mc{K}^{-1}\mathcal{T}
\end{eqnarray}
operads.

\subsubsection{Element-Level Structure of $\mc{F}_D\gamma\St\longrightarrow \mc{E}_V$}\label{ev}

As shown in Section 4 of \cite{Bar}, the algebra structure 
\begin{equation}\label{morphism}
    \Omega:\mc{F}_D\gamma\tilde{\mb{S}}[t]\longrightarrow \mc{E}_V
\end{equation}
is defined by the collection of stable $\mb{S}$-module morphisms 
\begin{equation}\label{mhat}
    \Omega_{n,b}:(\gamma\St((n,b)))^*\longrightarrow \mc{E}_V((n,b))
\end{equation}
\begin{equation*}
    [(\s_1\,\cdots\,\,\s_{b-2g+1})t^g[-n+b-2]]^*\mapsto \sum_{i_1\cdots \,i_n} \theta_{i_1\cdots i_n} v_{i_1}\otimes\cdots\otimes v_{i_n}
\end{equation*}
which respect the differentials $d_{\mc{F}}$ and $d_{\mc{E}_V}$.  We write the element of $\mc{E}_V((n,b))$ represented by $\theta=(\theta_{i_1...i_n})$ in the basis $\{v_i\}$ as $\overline\mu_{n,b}^{\theta}$.  

More explicitly, the 
morphism \eqref{morphism}
is defined by the collection of morphisms \eqref{mhat} along with a collection of commutative diagrams of the form
\begin{equation}\label{FeynmanSquare}
    \begin{diagram}
\node{\mc{F}_D\gamma\tilde{\mb{S}}[t]((n,b))}\arrow{s,l}{d_{\mc{F}}}\arrow{e,t}{\Omega_{n,b}}\node{\mc{E}_V((n,b))}\arrow{s,r}{d_{\mc{E}_V}}\\
\node{\mc{F}_D\gamma\tilde{\mb{S}}[t]((n,b))}\arrow{e,b}{\Omega_{n,b}}\node{\mc{E}_V((n,b))}
\end{diagram}
\end{equation}
indexed by pairs $(n,b)$ satisfying the stability condition $2b+n-2>0$, as defined for stable graphs in Definition \ref{stablegraph}. 


In general, the differential $d_{\mc{F}}$ of the Feynman Transform of a modular operad $P$ has the form $d_{P^*}+\sum_{H}d_{(\mu^P_{H\rightarrow G})^*}$, where the sum is taken over all $H$ such that $H/e\simeq G$ for some edge $e$ of $H$.  Because $\gamma\St((n,b))$ is concentrated in a single degree for all pairs $(n,b)$, the differential $d_{\gamma\St^*}$ can be ignored in our case.  

Restricting to $\gamma\tilde{\mb{S}}[t]((n,b)))^*$, the chain-complex diagram involving $d_{\mc{F}}$ and $d_{\mc{E}_V}$ has the form
\begin{equation}\label{diffdiagram}
    \begin{diagram}
\node{(\gamma\tilde{\mb{S}}[t]((n,b)))^*}\arrow{e,t}{\Omega_{n,b}}\arrow{s,l}{d_{\mc{F}}}\node{\mc{E}_V((n,b))}\arrow{s,r}{d_{\mc{E}_V}}\\
\node{\bigoplus_{H/e\simeq *_{n,b}}(D^{\vee}(H)\otimes \gamma\tilde{\mb{S}}[t]((H))^*)_{\mm{Aut}(H)}}\arrow{e,t}{\Omega_{n,b}}\node{\mc{E}_V((n,b))}
\end{diagram}
\end{equation}
The morphism $\Omega$, restricted to the lower left node of Diagram \ref{diffdiagram}, decomposes as
\begin{equation}\label{compdiagram}
    \begin{diagram}
\node{D^{\vee}(H)\otimes \gamma\tilde{\mb{S}}[t]((H))^*}\arrow{e,t}{\Omega}\arrow{s,l}{\pi}\node{D^{\vee}(H)\otimes\mc{E}_V((H))}\arrow{s,r}{\mu_H^{\mc{E}_V}}\\
\node{(D^{\vee}(H)\otimes \gamma\tilde{\mb{S}}[t]((H))^*)_{\mm{Aut}(H)}}\arrow{e,t}{\Omega}\node{\mc{E}_V((n,b))}
\end{diagram}
\end{equation}
where $\pi$ is again projection onto coinvariants.

The element-level combination of Diagrams \ref{diffdiagram} and \ref{compdiagram} takes the form
\begin{equation}\label{newelementleve}
\begin{diagram}
\node{((\s_1\,\cdots\,\,\s_{b-2g+1})t^g)^*}\arrow{s,l}{\pi\circ\, \sum_H d_{\left(\mu^{\gamma\St}_{H}\right)^*}}\arrow[1]{e,t}{\Omega}
\node[1]{\overline{\mu}_{n,b}^{\,\theta}}\arrow{s,r}{d_{\mc{E}_V}}\\
\node{\begin{tabular}{cc}
 $\sum_e(\alpha\otimes\s_e' t^{g'}\otimes\s_e'' t^{g''})^*$  \\
  \qquad + $\sum_{\ell}(\beta\otimes\s_{\ell}'''t^{g-1})^*$ \\
\end{tabular}}\arrow[1]{e,t}{\mu_H^{\mc{E}_V}\circ\,\Omega}\arrow{s,l}{\bigoplus_{H}\Omega|_{H}}\node[1]{
\begin{tabular}{cc}
$\sum_e(-1)^{\epsilon_i}\overline{\mu}_{k,b'}^{\,\vartheta'}\circ_i \overline{\mu}_{n-k+2,b''}^{\,\vartheta''}$\\
\qquad + $\sum_{\ell}(-1)^{\epsilon_j}\overline{\mu}_{n,b}^{\,\vartheta'''}$
\end{tabular}}\\
\node{\begin{tabular}{cc}
$\sum_e\overline{\mu}_{k,b'}^{\,\theta'}\otimes \overline{\mu}_{n-k+2,b''}^{\,\theta''}$\\
\qquad +$ \sum_{\ell}\overline{\mu}_{n+2,b-1}^{\,\theta'''}$
\end{tabular}}\arrow{ne,b}{\mu_H^{\mc{E}_V}}
\end{diagram}
\end{equation}
where the first sum in each of the lower nodes is indexed by the set of insertions of non self-intersecting edges, and the second sum in each node by insertions of loops.  The terms $\epsilon_i$, as defined in \eqref{signsEV} and \eqref{signsEV2}, yield the appropriate signs.  We decorate the image vectors with $\vartheta$ to indicate possibly new tensor coefficients arising from contraction via $B$.  Note that 
$$\pi_{ff'}(\s_e'\s_e''(ff')),\pi_{ff'}(\s'''_{\ell}(ff'))=\s_1\,\cdots\,\,\s_{b-2g+1}$$  
We also have that if $v\in (D^{\vee}(H)\otimes \gamma\tilde{\mb{S}}[t]((H))^*)_{\mm{Aut}(H)}$ in Diagram \ref{diffdiagram}, then $v=\pi(\tilde{v})$ for some lift $\tilde{v}$ of $v$, and $\Omega(v)=(\Omega\circ \pi)(\tilde{v})=(\mu_H^{\mc{E}_V}\circ\Omega)(\tilde{v})$.


The equation $d_{\mc{E}_V}\circ\Omega=\Omega\circ d_{\gamma\St^*}$ has the expanded form
\begin{eqnarray}\label{chaincomplexequation}
   && d_{\mc{E}_V}(\hat{m}_{n,b}(\s[-n+b-2])^*) \nn\\
   &=& (\mu_{G_{n,b-1}^1}^{\mc{E}_V}\circ\hat{m}_{n+2,b-1}\circ(\mu_{G^1_{n,b-1}}^{\gamma\St})^*)(\s[-n+b-2]^*)) \nn\\
   &+& \sum_{\substack{H\in\Gamma((n,b))\\H/e\simeq G_{n,b}}}(\mu_H^{\mc{E}_V}\circ(\hat{m}_{k,b_1}\otimes\hat{m}_{n-k+2,b_2})\circ(\mu_H^{\gamma\St})^*)(\s[-n+b-2]^*))
\end{eqnarray}

\subsection{The Quantum $A_{\infty}$-Relations as a Generalization of the $A_{\infty}$-Relations}

\begin{defn} A Quantum $A_{\infty}$-algebra is a collection of maps
\begin{eqnarray}\label{Qdef}
\hat{m}_{n,b}:\gamma\St((n,b))\longrightarrow \mc{E}_V \\
(\s[-n+b-2])^*\mapsto \overline{\mu}_{n,b}^{\,\theta} \nn
\end{eqnarray}
satisfying Equation \eqref{chaincomplexequation}.  As before, $V$ is any chain complex such that its homogeneous subspaces are finite-dimensional, and is endowed with a non-degenerate bilinear form $B$ such that $B(dx,y)+(-1)^{|x|}B(x,dy)=0$.  
\end{defn}

\begin{defn}
An (non-unital) $A_{\infty}$-algebra $A$ is a cohomologically $\mb{Z}$-graded $k$-vector space $$A=\bigoplus_{p\in\mb{Z}}A^p$$with
graded $k$-linear maps, for $d\geq1$,$$m_d:A^{\otimes d}\longrightarrow A$$of degree $2-d$ satisfying for each
$d\geq1$ the relation
\begin{equation}\label{A-inf}
    \sum_{\substack{1\leq p\leq d\\0\leq q\leq d-p}}  (-1)^{\deg a_1+\cdots+\deg a_q-q}m_{d-p+1}(a_d,\dots,a_{p+q+1}
    ,m_p(a_{p+q},\dots,a_{q+1}),a_q,\dots,a_1)=0.
\end{equation}
The sign $(-1)^{\deg a_1+\cdots+\deg a_q-q}$ is given by Seidel in \cite{Seidel}.

Given a bilinear form $B:A\otimes A\longrightarrow \mb{C}$ we construct an isomorphism $A^*\longrightarrow A$ via the identification 
$\mm{Hom}^{|B|}(A\otimes A,\mb{C}) = \mm{Hom}^0((A\otimes A)[-|B|],\mb{C})$, and use this to construct an isomorphism 
$$\mm{Hom}(A^{\otimes d},A) \longrightarrow (A^{\otimes d})^*\otimes A\longrightarrow A^{\otimes d}\otimes A,$$
where the first map is the natural isomorphism.  The first isomorphism preserves degree and the second adds $d|B|$.     

We can therefore express \eqref{A-inf} in its tensor
form:
\begin{equation}\label{TensorA-inf}
\sum_{\substack{1\leq p\leq d\\0\leq q\leq d-p}}(-1)^{\deg a_1+\cdots+\deg a_q} \,\overline{\mu}_{d-p+1}\circ_{q+1}\overline{\mu}_p=0,
\end{equation}
where $\overline{\mu}_i := \varphi(m_i)$, $\circ_i$ is contraction via $B$, and the tensors $\overline{\mu}_d$
sit in degree $2+d(|B|-1)$.
\end{defn}

\begin{theorem}\label{A_infThm}
The Quantum $A_{\infty}$-relations reduce to the usual $A_{\infty}$ relations for $b=0$.  
\end{theorem}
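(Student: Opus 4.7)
My plan is to specialize the Quantum $A_\infty$ chain-complex equation \eqref{chaincomplexequation} to $b=0$ and match each surviving term with a summand in the tensor form of the classical $A_\infty$ relation \eqref{TensorA-inf}. The first observation is that the second summand on the right-hand side of \eqref{chaincomplexequation} accounts for inserting a loop at the unique vertex of $*_{n,0}$ and involves $\hat m_{n+2,-1}$ on $(\gamma\St((n+2,-1)))^*$. Since the underlying $\mb{S}$-module of $\St$ is concentrated on pairs with $b = 2g + i_\s - 1 \geq 0$, this factor vanishes identically. For $b=0$ the relation thus collapses to
\begin{equation*}
d_{\mc{E}_V}(\overline{\mu}_{n,0}^{\,\theta}) \;=\; \sum_{\substack{H \in [\Gamma((n,0))]\\ H/e\simeq *_{n,0}}} \bigl( \mu_H^{\mc{E}_V} \circ (\hat m_{k,0}\otimes \hat m_{n-k+2,0}) \circ (\mu_H^{\gamma\St})^* \bigr)\bigl([\s[-n-2]]^*\bigr).
\end{equation*}

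Next I would enumerate the two-vertex trees $H$ in the sum. Each is obtained by splitting the vertex of $*_{n,0}$ into two vertices joined by a single edge, with flags $f$ and $f'$ distributed one to each new vertex; stability forces both valences to be at least $3$, so the partition of the $n$ external legs yields parts of sizes $k-1$ and $n-k+1$ with $k \in \{3, \ldots, n-1\}$. These graphs correspond bijectively to the index pairs $(p,q)$ of the internal summands ($p, d-p+1 \geq 2$) of \eqref{TensorA-inf} with $d = n-1$, via $p = k-1$ and $q$ read off from the position of the insertion within the cyclic generator $\s = (1\,2\,\cdots\,n) \in \gamma\St((n,0))$.

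For each such $H$ I would unpack the three maps in order: $(\mu_H^{\gamma\St})^*$ applied to $[\s[-n-2]]^*$ returns the unique pair of cycles $(\s', \s'')$, with $\s' = (i_1\cdots i_{k-1}\,f)$ and $\s'' = (j_1\cdots j_{n-k+1}\,f')$, satisfying $\pi_{ff'}(\s'\s''(ff')) = \s$ by the dual of the composition formula \eqref{Comp}; the tensor product $\hat m_{k,0}\otimes \hat m_{n-k+2,0}$ returns $\overline{\mu}_{k,0}^{\,\theta'} \otimes \overline{\mu}_{n-k+2,0}^{\,\theta''}$; and finally $\mu_H^{\mc{E}_V}$, which is the contraction along the unique edge $(ff')$ using the bilinear form $B$ as in \eqref{signsEV}, yields $\overline{\mu}_{k,0} \circ_{q+1} \overline{\mu}_{n-k+2,0}$ up to the explicit Koszul sign $(-1)^\epsilon$. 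On the left, $d_{\mc{E}_V}$ acts on $\overline{\mu}_{n,0}$ through the graded Leibniz rule induced by the differential $d_V$ on the underlying complex $V$; since $d_V$ plays the role of $m_1$ in the classical picture, transferring the $d_{\mc{E}_V}$ terms to the right-hand side precisely accounts for the boundary indices $p=1$ and $p=d$ in \eqref{TensorA-inf} that are unattainable by any stable two-vertex tree.

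The main obstacle will be matching signs. Three independent sources must combine to the Seidel prefactor $(-1)^{\deg a_1 + \cdots + \deg a_q - q}$: (i) the antisymmetry of $B$ encoded in the factor $\bigotimes_{\mm{Edge}(G)}\bigwedge^2\mb{C}^{\{s_e,t_e\}}$ of the twist $D$; (ii) the permutation sign from the dual of $\pi_{ff'}$ together with the orientation shift $\gamma((n,0)) = \mathrm{sgn}_n[-n-2]$ that orients the Feynman transform; and (iii) the supercommutativity signs $(-1)^\epsilon$ accumulated by moving tensor factors past one another in \eqref{signsEV}. A careful term-by-term reconciliation shows these combine to Seidel's convention, after which the $b=0$ specialization of \eqref{chaincomplexequation} is literally \eqref{TensorA-inf}, completing the reduction.
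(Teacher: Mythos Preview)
Your overall strategy matches the paper's: specialize \eqref{chaincomplexequation} to $b=0$, note that only the two-vertex tree summands survive, and then verify that the contraction signs reproduce the classical $A_\infty$ signs of \eqref{TensorA-inf}. You are in fact more careful than the paper on two points the author leaves implicit: the vanishing of the loop-insertion term (because it would land in $\St((n+2,-1))=0$), and the role of $d_{\mc{E}_V}$ in supplying the $p=1$ and $p=d$ summands that stability excludes from the graph sum.

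The gap is that you stop exactly where the paper's actual work begins. Your sentence ``a careful term-by-term reconciliation shows these combine to Seidel's convention'' is the entire content of the paper's argument, carried out in the lemma immediately following the theorem. Moreover, the paper's computation is simpler than the three-source analysis you sketch: sources (i) and (ii) play no role, and the whole sign comes from the Koszul exponent \eqref{signsEV} alone. Concretely, the paper substitutes $|\overline{\mu}_{n,b}|=2+d(|B|-1)$ and $|p_n|+|q_\beta|=|B|$ into \eqref{Abouzaidsign} and simplifies under the assumption that $|B|$ is odd; all terms cancel modulo $2$ except $\sum_{i=1}^{\beta-1}|q_i|$, which is the sign in \eqref{TensorA-inf}. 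If you attempt to track contributions from the $\mathrm{sgn}_n$ twist in $\gamma$ or from the dual of $\pi_{ff'}$ separately, you risk double-counting what is already packaged into \eqref{signsEV}. Replace your final paragraph with this direct simplification and the proof is complete.
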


\begin{proof}
When $b=0$ Equation \ref{chaincomplexequation} takes the form
\begin{eqnarray}\label{reducedchaincomplexequation}
  0 &=& \sum_{\substack{H\in\Gamma((n,b))\\H/e\simeq G_{n,b}}}(\mu_H^{\mc{E}_V}\circ(\hat{m}_{k,b_1}\otimes\hat{m}_{n-k+2,b_2})\circ(\mu_H^{\gamma\St})^*)(\s[-n+b-2]^*)) \nn\\
  &=& \sum_{\substack{H\in\Gamma((n,b))\\H/e\simeq G_{n,b}}} (-1)^{\epsilon_{\beta}}\overline{\mu}_{n-k+2,0}\circ_{\beta}\overline{\mu}_{k,0}
\end{eqnarray}
We need only show the signs $(-1)^{\epsilon_{\beta}}$ are of the form given in Equation \ref{TensorA-inf}.  To this end, we prove the following lemma.
\end{proof}

\begin{lem}
Let $\overline{\mu}_n = p_1\otimes p_2\otimes\cdots \otimes p_n$ and $\overline{\mu}_m = q_1\otimes\cdots\otimes q_{\beta}\otimes\cdots\otimes q_m$, where we adopt the convention of ordering the points $p_i$ counter-clockwise as described in the discussion following Definition \ref{fatgraphorientation}.  The contraction of the element $\overline{\mu}_n\otimes_{\beta} \overline{\mu}_m$ via $B$ for $B$ odd yields the sign
\begin{equation}\label{ainfsign}
  (-1)^{\sum\limits_{i=1}^{\beta-1}|q_i|}
\end{equation}

\end{lem}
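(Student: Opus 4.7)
The lemma asserts a specific sign for the contraction $\overline{\mu}_n \otimes_\beta \overline{\mu}_m$ via the bilinear form $B$. My plan is to apply directly the general contraction sign formula \eqref{signsEV} derived in Section~\ref{evsings}, specialized to the case $v = \overline{\mu}_n$, $w = \overline{\mu}_m$, with the last factor $p_n$ of $v$ contracted against $q_\beta$. Concretely, one sets $n_1 = n$, $\alpha = n$, $v_f = p_n$, $n_2 = m$, and $w_{f'} = q_\beta$ in \eqref{signsEV}, together with $|B| = 1$. Under these substitutions the first summand in the exponent of \eqref{signsEV} is an empty sum and vanishes, while the third summand reduces to $|v|-|p_n|$. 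The remaining exponent is
\begin{equation*}
\sum_{j=1}^{\beta-1}|q_j|(|w|-|q_j|) \;+\; (|v|-|p_n|) \;+\; \sum_{\ell=1}^{\beta-1}|q_\ell|(|v|+|w|-|p_n|-|q_\beta|-|q_\ell|).
\end{equation*}

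The proof then proceeds by a direct mod-$2$ simplification. Two elementary identities carry the argument: $|q_j|^2 \equiv |q_j| \pmod 2$, which converts each quadratic term $|q_j|(|w|-|q_j|)$ into the linear expression $|w||q_j|+|q_j|$; and the doubling identity $2|w|\sum_j|q_j| \equiv 0 \pmod 2$, which cancels the $|w|$-contributions from the second and fourth summands against one another. After these two reductions, what remains of the exponent is a linear expression in $|v|$, $|p_n|$, $|q_\beta|$, and $\sum_{j=1}^{\beta-1}|q_j|$.

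The final step is to invoke the grading conventions in force for the $A_\infty$ context of Theorem~\ref{A_infThm}. Because $|B|=1$, the tensor $\overline{\mu}_n = \varphi(m_{n-1})$ sits in a fixed degree of $\mc{E}_V((n,0))$, which constrains the parity of $|v| = \sum|p_i|$, and the non-vanishing requirement $B(p_n,q_\beta)\neq 0$ forces $|p_n|+|q_\beta|\equiv 1 \pmod 2$. These two constraints cause the residual $|v|, |p_n|, |q_\beta|$ contributions to collapse pairwise modulo $2$, leaving exactly the claimed exponent $\sum_{i=1}^{\beta-1}|q_i|$. The main obstacle is purely combinatorial bookkeeping of the four summands in \eqref{signsEV} and their reduction modulo $2$; no new geometric or categorical input is required beyond the composition formula and the grading conventions already in place.
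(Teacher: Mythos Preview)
Your proposal is correct and follows essentially the same route as the paper: specialize the contraction sign \eqref{signsEV} to $\alpha=n$, $v_f=p_n$, $w_{f'}=q_\beta$, then reduce the exponent modulo $2$ using the fixed parity of $|\overline{\mu}_n|,|\overline{\mu}_m|$ and the constraint $|p_n|+|q_\beta|\equiv |B|$. The only cosmetic difference is that the paper keeps $|B|$ symbolic, substitutes the closed form $|\overline{\mu}_{n,b}|=2+d(|B|-1)$, and only at the end sets $|B|$ odd, whereas you set $|B|=1$ at the outset and carry out the mod-$2$ arithmetic directly; the underlying computation is the same.
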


\begin{proof}
Recall Equation \eqref{signsEV}.  The contraction via $B$ of $\overline{\mu}_n\otimes_{\beta} \overline{\mu}_m$  yields the sign $(-1)^s$ where

\begin{equation}\label{Abouzaidsign}
s = \sum\limits_{i=1}^{\beta-1}|q_i|(|\overline{\mu}_m|-|q_i|)
+|B|(|\overline{\mu}_n|-|p_n|)+|p_n|+\sum\limits_{j=1}^{\beta-1}|q_j|(|\overline{\mu}_n|+|\overline{\mu}_m|-|p_n|-|q_{\beta}|-|q_j|)
\end{equation}
and is taken modulo 2.

We have that $|\overline{\mu}_{d,b}|=2+d(|B|-1)$ for any $d,b$ satisfying the stability constraint.  Substituting $2+n(|B|-1)$ for $|\overline{\mu}_{n,b}|$ and $2+m(|B|-1)$ for $|\overline{\mu}_{m,b}|$ in \eqref{Abouzaidsign} yields   

\begin{eqnarray*}
s&=& \sum\limits_{i=1}^{\beta-1} |q_i|(2+m(|B|-1)-|q_i|) + |B|(2+n(|B|-1)-|p_n|)+|p_n|  \\
&+& \sum\limits_{j=1}^{\beta-1} |q_j|(2+n(|B|-1) + 2+m(|B|-1) -|p_n|-|q_{\beta}|-|q_j|) \mod 2
\end{eqnarray*}

Because $B$ is odd by assumption, we make the substitution $|B|=1\mod 2$, which gives 
\begin{eqnarray*}
s&=&  \sum\limits_{i=1}^{\beta-1} |q_i|(-|q_i|) + |B|(-|p_n|)+|p_n| +  \sum\limits_{j=1}^{\beta-1} |q_j|(-|p_n|-|q_{\beta}|-|q_j|)\mod 2 \\
&=&  \sum\limits_{i=1}^{\beta-1} |q_i|(-|q_i|) + (-|B|+1)|p_n| +  \sum\limits_{j=1}^{\beta-1} |q_j|(-|B|-|q_j|)\mod 2\\
&=&  \sum\limits_{i=1}^{\beta-1} |q_i| +  \sum\limits_{j=1}^{\beta-1} |q_j|(-|B|-|q_j|)\mod 2 \\
&=&  \sum\limits_{i=1}^{\beta-1} |q_i| +  \sum\limits_{j=1}^{\beta-1} |q_j|(1-|q_j|)\mod 2 \\ 
&=&  \sum\limits_{i=1}^{\beta-1} |q_i|  \mod 2,
\end{eqnarray*}
where the second equality follows from the implicit assumption that $B(p_n,q_{\beta})\neq0$ and thus $|B|=|p_n|+|q_{\beta}|$.

\end{proof}




\section{The Quantum $A_{\infty}$-Structure on the Elliptic Curve}
We prove the Quantum $A_{\infty}$-relations on the Elliptic curve 
are obtained by counting degenerations of one-dimensional moduli spaces of tropical Morse graphs of genus zero or one.

\subsection{The Modular Operad $\mc{E}_{\mc{L}}$}\label{ModOPSection}
In order to extend the Quantum $A_{\infty}$-algebra structure from chain complexes to the Fukaya category of a Calabi-Yau manifold $\mc{X}$, we construct an operad $\mc{E}_{\mc{L}}$, modeled after $\mc{E}_V$, from the Hom-spaces of the Lagrangian submanifolds of $\mc{X}$.  Moduli spaces of Riemann surfaces with boundary components lying on Lagrangian submanifolds and the relations between them can then be parameterized by stable graphs and their contractions.

Let $b=2g+i_{\s}-1\in\mb{Z}_{\geq0}$ and let $\{L_{i1},\dots,L_{id_i}\}_{i=1}^{b-2g+1}$ be a finite sequence of cyclic chains of Lagrangians, where $g,i_{\s}\in\mb{Z}$ are such that $g\geq0$ and $i_{\s}\geq1$.  Define for any $\s\in S_n$ the element
\begin{equation}\label{EL}
    \overline{\mu}_{n,b,\s}\in\mc{E}_{\mc{L}}((n,b))=\bigotimes_{i=1}^{b-2g+1}\bigotimes_{j=2}^{d_i}
    \mm{Hom}(L_{ij},L_{i(j-1)})\otimes\mm{Hom}(L_{i1},L_{id_i})
\end{equation}
by
\begin{equation}\label{munb}
\overline{\mu}_{n,b,\s}=\sum_{\substack{\mm{tensors}\\p_1\otimes\dots\otimes p_{n}}}
\sum_{G_{n,b,\s}^{\mm{trop}}}(-1)^{s(\phi)}q^{\deg\phi}
p_1\otimes\cdots\otimes p_n,
\end{equation}
where $\phi\in G_{n,b,\s}^{\mm{trop}}$, $\sum_{i=1}^{b-2g+1} d_i=n$, and the points $p_1,\dots,p_n$ are partitioned into $b-2g+1$ cycles $\s_1\cdots\,\s_{b-2g+1}=\s$ according to the cyclic chains $\{L_{i1},\dots,L_{id_i}\}_{i=1}^{b-2g+1}$. 

The tensors $\overline{\mu}_{n,b,\s}$ of $\mc{E}_{\mc{L}}$ are defined by summing over tropical Morse graphs with $n$ legs, genus $b$, and moduli dimension zero.  The dimension of $G_{n,b,\s}^{\mm{trop}}$ is given by $n-2+2b-\sum\deg p_i$, so $\overline{\mu}_{n,b,\s}$ sits in cohomological degree
\begin{eqnarray*}
    n-\deg(p_1\otimes\cdots\otimes p_n) &=& n-\sum\deg p_i \\
     &=& n-(n-2+2b) \\
    &=& 2-2b
  \end{eqnarray*}

 The sign $s(\phi)$ and degree $\deg\phi$ of a Tropical Morse Graph $\phi$ were defined in Section \ref{ConstructionOfRS}.  The compositions are defined as in Equations \eqref{signsEV} and \eqref{signsEV2}, and because $\mc{E}_{\mc{L}}((n,b))$ is concentrated in a single degree for any $(n,b)$, we have $d_{\mc{E}_{\mc{L}}}=0$.

Notice that if $b=0$, then $\overline{\mu}_{n,0}$ corresponds to a map
\begin{equation}\label{usualmns}
    m_{n-1}:\mm{Hom}(L_1,L_2)\otimes\cdots\otimes\mm{Hom}(L_{n-1},L_n)\longrightarrow \mm{Hom}(L_1,L_n)
\end{equation}
via the isomorphism
\begin{equation}\label{tensiso}
    \bigotimes_{i=2}^{n-1}\mm{Hom}(L_i,L_{i-1})\otimes\mm{Hom}(L_1,L_n)\longrightarrow \mm{Hom}(\bigotimes_{i=2}^{n-1}\mm{Hom}(L_{i-1},L_i),\mm{Hom}(L_1,L_n))
\end{equation}
\begin{equation*}
    \bigotimes_{i=1}^n p_i\mapsto (\bigotimes_{i=1}^{n-1} q_i\mapsto \prod_{i=1}^{n-1}B(p_i,q_i)p_n)
\end{equation*}
where the degree 1 bilinear form $B:\mm{Hom}(L_i,L_j)\otimes\mm{Hom}(L_j,L_i)\longrightarrow \mb{C}$
is defined by \begin{equation*}
   (-1)^{|p_i|}B(p_i,q_i) =
    \begin{cases}
    1 &\text{if $q_i=p_i^*$}\\
    0 &\text{otherwise}
    \end{cases}
    \end{equation*}
for $1\leq i\leq n-1$.

The bilinear form $B$ is anti-symmetric, so $\mc{E}_{\mc{L}}$, like $\mc{E}_V$, is a modular $$D = \mc{K}^{-1}\otimes\bigotimes_{\mm{Edge}(G)}{\bigwedge}^2\mb{C}^{\{s_e,t_e\}}$$ operad.  See Section \ref{evsings} for details.



\subsection{The Existence of the Quantum $A_{\infty}$-Relations on the Elliptic Curve}

Both the $A_{\infty}$-relations and the Quantum $A_{\infty}$-relations are obtained by considering degenerations of 1-dimensional moduli spaces of tropical Morse graphs.  In the case of the elliptic curve we assume $\deg(p)\in\{0,1\}$, so $1=\dim G_{n,b,\s}^{\mm{trop}} = n-2+2b-\sum\deg p_i$ only if $b=0$ or $b=1$.  As was shown in Theorem \ref{A_infThm}, the $b=0$ case corresponds to the usual $A_{\infty}$-relations, so proving the existence of the Quantum $A_{\infty}$-relations on the elliptic curve reduces to the $b=1$ case of Equation \eqref{chaincomplexequation}.




\begin{theorem}\label{MainTheorem1} (Main Theorem)
There exists a non-trivial Quantum $A_{\infty}$ structure on the elliptic curve, i.e., there exists a non-trivial algebra structure 

\begin{equation}\label{morphismCategorical}
    \Omega:\mc{F}_D\gamma\tilde{\mb{S}}[t]\longrightarrow \mc{E}_{\mc{L}},
\end{equation}
which respect the differentials $d_{\mc{F}}$ and $d_{\mc{E}_{\mc{L}}}$. 

\end{theorem}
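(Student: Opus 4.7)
The plan is to define $\Omega$ by the formula already appearing in Section~\ref{ev}, namely on generators
\begin{equation*}
  \Omega_{n,b}\colon (\gamma\St((n,b)))^{*}\longrightarrow \mc{E}_{\mc{L}}((n,b)),
  \qquad ((\s_{1}\cdots\s_{b-2g+1})t^{g}[-n+b-2])^{*}\longmapsto \overline{\mu}_{n,b,\s},
\end{equation*}
with $\overline{\mu}_{n,b,\s}$ defined by the count \eqref{munb} of tropical Morse graphs. The compositions for other stable graphs $H$ are then forced by the universal property of the free modular operad $\mc{F}_D\gamma\St$, so the whole content of the theorem is the commutativity of the chain-complex square \eqref{FeynmanSquare}, i.e.\ Equation \eqref{chaincomplexequation}. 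Because $\mc{E}_{\mc{L}}((n,b))$ is concentrated in a single degree we have $d_{\mc{E}_{\mc{L}}}=0$, so the required identity reduces to showing that the right-hand side of \eqref{chaincomplexequation} vanishes for every stable pair $(n,b)$ with $b\in\{0,1\}$ (the only cases allowing $\dim G_{n,b,\s}^{\mm{trop}}=1$ on the elliptic curve).

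For $b=0$ Theorem~\ref{A_infThm} already identifies the statement with the classical $A_{\infty}$-relations, which in our combinatorial model are obtained by counting boundary points of the $1$-dimensional moduli spaces $G_{n,0,\s}^{\mm{trop}}$ of tropical Morse trees, exactly as in Chapter~8 of \cite{Clay}. The heart of the theorem is therefore the $b=1$ case. Here the strategy is the standard one for $A_{\infty}$-type identities, adapted to the genus-one setting: I will show that each term appearing in $\Omega(d_{\mc{F}}(\sigma t^{0})^{*})$ is in bijection with a boundary component of some $1$-dimensional moduli space $G_{n,1,\s}^{\mm{trop}}$, and that the total signed count over the boundary of a compact $1$-manifold is zero. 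Concretely, for a fixed $\s\in\mb{S}_{n}$ with a single cycle (genus-one annulus) or two cycles (disk with an interior marked loop), I would:
\begin{enumerate}
\item enumerate the boundary strata of $G_{n,1,\s}^{\mm{trop}}$ using the topological-change analysis of Section~3 and Lemma~\ref{newdeg1}, which shows each end of a $1$-dimensional family produces exactly one new degree-$1$ corner;
\item match each boundary stratum to a graph $H$ with one edge $e$ such that $H/e\simeq *_{n,1}$, and identify the corresponding composition $\mu_{H}^{\mc{E}_{\mc{L}}}(\overline{\mu}_{k,b'}\otimes\overline{\mu}_{n-k+2,b''})$ or $\mu_{H}^{\mc{E}_{\mc{L}}}(\overline{\mu}_{n+2,0})$ with the Feynman-dual pre-image $(\mu_{H}^{\gamma\St})^{*}(\s t^{0})^{*}$;
\item invoke Lemma~\ref{cyclelemma} to ensure the free vertex lies on the unique cycle, so that the only possible degenerations are either an annulus pinched into a disk or a disk bubbling off an annulus, and use Lemma~\ref{main} to pair the two ends of each $1$-dimensional component with equal sign and equal area factor $q^{\deg u}$.
\end{enumerate}

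The main obstacle, and the step that requires the most care, is the sign matching. Three different sign conventions must be shown to be compatible: the Abouzaid sign $(-1)^{s(u)}$ in \eqref{polygonsign}, the operadic sign $(-1)^{\epsilon}$ produced by the contraction map $\mu_{G}^{\mc{E}_{V}}$ in \eqref{signsEV} and \eqref{signsEV2}, and the orientation on the Feynman transform encoded by the twist $\mc{K}^{-1}\otimes\bigotimes_{\mm{Edge}(G)}\bigwedge^{2}\mb{C}^{\{s_{e},t_{e}\}}$ together with the shift in $\gamma$. To handle this, I will first verify the match locally at a single degeneration (the contraction of one edge of $H$), reducing to the two elementary graphs $*_{n_{1},b_{1}}\coprod *_{n_{2},b_{2}}$ and $*_{n,b}^{1}$ treated in Section~\ref{evsings}; here Lemma~\ref{main} guarantees that the two paired degenerate ends contribute equal signs, which is precisely the cancellation needed for a boundary count of a $1$-manifold. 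Once the local signs are reconciled I will globalise using Proposition~2.23 of \cite{Mod}, exactly as was done in the computation of the twist of $\gamma\St$.

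Finally, to establish non-triviality it suffices to exhibit one pair $(n,b)$ and one $\s$ for which $\overline{\mu}_{n,b,\s}\neq 0$. The simplest case $b=1$, $n=3$, with $\s$ a single $3$-cycle, already admits a non-empty $0$-dimensional moduli space of annuli $G_{3,1,\s}^{\mm{trop}}(p_{1,2},p_{2,3},p_{3,1})$ by the dimension formula of Theorem~\ref{DimensionTheorem} and the explicit annulus pictures of Figure~\ref{ThreePntsOfModuliSpaceGenusOne}; these contribute non-zero Novikov-weighted terms to $\overline{\mu}_{3,1,\s}$. Combined with the classical non-triviality of $\overline{\mu}_{n,0}$ (the Polishchuk--Zaslow product on the elliptic curve), this completes the construction of the required algebra structure $\Omega\colon \mc{F}_{D}\gamma\St\to \mc{E}_{\mc{L}}$.
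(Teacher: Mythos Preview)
Your overall architecture matches the paper's: reduce to $b\in\{0,1\}$, dispatch $b=0$ via Theorem~\ref{A_infThm}, and for $b=1$ analyze the ends of the one-dimensional moduli spaces $G_{n,1,\s}^{\mm{trop}}$ using Lemmas~\ref{newdeg1}, \ref{main}, and \ref{cyclelemma}. However, two of your key steps are inverted or incomplete relative to what actually makes the argument go through.

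First, you misstate the role of Lemma~\ref{cyclelemma}. You write that it ensures the free vertex lies on the cycle ``so that the only possible degenerations are either an annulus pinched into a disk or a disk bubbling off an annulus.'' In the paper the lemma is used for the \emph{opposite} purpose: it shows that the disk-bubbling degenerations (which would correspond to the edge-insertion terms $\overline{\mu}_{k,b_{v_1},\rho}\circ_i\overline{\mu}_{n-k+2,b_{v_2},\upsilon}$) \emph{cannot occur}, because such a degeneration would require the free vertex to lie off the cycle. Thus the $\circ_i$ summands in the relation vanish outright, and the entire content of the $b=1$ identity reduces to showing that the single loop-insertion term (the self-contraction of $\overline{\mu}_{n+2,0,\tilde{\s}}$) is zero.

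Second, and more seriously, your sign argument has the logic backwards. You say ``Lemma~\ref{main} guarantees that the two paired degenerate ends contribute equal signs, which is precisely the cancellation needed for a boundary count of a $1$-manifold.'' But equal Abouzaid signs $s(u)=s(v)$ and equal areas do \emph{not} by themselves produce cancellation; they produce agreement. The missing step, which is the crux of the paper's proof, is that the two new corners $p,q$ (resp.\ $p',q'$) produced at the two ends satisfy $\deg p+\deg p'=\deg q+\deg q'=1$ by Lemma~\ref{newdeg1}, and then the \emph{antisymmetry} of the bilinear form $B$ forces $B(p\otimes p_1\otimes\cdots\otimes p_m\otimes q)=-B(p'\otimes q_1\otimes\cdots\otimes q_m\otimes q')$. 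It is this sign flip from $B$, not a $1$-manifold orientation argument, that makes the self-contraction term vanish. Your proposal mentions the antisymmetry only in passing as a ``twist'' to be reconciled, whereas in fact it is the mechanism of cancellation.

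Finally, your non-triviality paragraph is off target: Figure~\ref{ThreePntsOfModuliSpaceGenusOne} depicts a one-dimensional family, not a rigid annulus, so it does not exhibit a nonzero $\overline{\mu}_{3,1,\s}$. The paper does not argue non-triviality in the proof; the classical $b=0$ products already suffice for that.
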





\begin{proof}

Because $d_{\mc{E}_{\mc{L}}}=0$, we must show that the elements $\overline{\mu}_{n,1,\s}\in \mc{E}_{\mc{L}}((n,1))$ satisfy the the following simplified form of the equation \eqref{chaincomplexequation}:
\begin{equation}
(-1)^{\epsilon}\overline{\mu}_{n,1,\s}+\sum_e(-1)^{\epsilon_i}\overline{\mu}_{k,b_{v_1},\rho}\circ_i\overline{\mu}_{n-k+2,b_{v_2},\upsilon}=0
\end{equation}
Here, $\pi_{ff'}(\rho\upsilon(ff'))=\s$, $b_{v_1}+b_{v_2}=1$, the sum is taken over insertions of non self-intersecting edges $e$, and the signs 
$(-1)^{\epsilon_i},(-1)^{\epsilon}$ are given by Equations \eqref{signsEV} and \eqref{signsEV2}, respectively.  Note that there is exactly one term corresponding to the insertion of a self-intersecting edge, as the equation $\pi_{ff'}(\tilde{\s}(ff'))=\s$ has a unique cycle solution $\tilde{\s}$ for $b=1$ .

Let $H$ be any stable graph which contracts to a single-vertex graph via the contraction of exactly one edge.  Recall that a $b=1$ vertex is necessarily decorated with two cycles, so that when the adjoint map 
$(\mu_H^{\gamma\St})^*$ is applied to a vector represented by a single-vertex graph, the result is either
a single-vertex graph with a self-intersecting $b=0$ edge, or a two-vertex tree with exactly one $b=0$ vertex and one $b=1$ vertex.  In the latter case, one of the cycles decorating the $b=1$ vertex is comprised solely of the unique flag which both comprises the edge and is attached to the $b=1$ vertex.  The flags comprising the edge contracted via $\mu_H$ will be denoted as before by $f$ and $f'$.  For each topologically unique way of inserting a non self-intersecting edge, there are two labelings of the vertices $\{v_1,v_2\}=\mm{Vert}(H)$.

The compositions $\overline{\mu}_{k,b_{v_1},\rho}\circ_i\overline{\mu}_{n-k+2,b_{v_2},\upsilon}$ do not exist.  Indeed, these compositions are defined by degenerations of moduli spaces of tropical Morse graphs such that the unique free vertex lies away from the unique cycle, each corresponding topologically to a disk bubbling off from an annulus.  By Lemma \ref{cyclelemma}, these moduli spaces are one dimensional only if the single free vertex of a point on the moduli space lies on the unique cycle.  This leaves us with $(-1)^{\epsilon}\overline{\mu}_{n,1,\s}=0$, which reduces to $\overline{\mu}_{n,1,\s}=0$.

By Lemma \ref{main} one need only show that for each degenerate pair $u$ and $v$, the contractions via the antisymmetric form $B$ of the tensors corresponding to $u$ and $v$ yield opposite signs.

Let $p,q$ be the matching pair of corners of one degeneration and $p',q'$ be the matching pair for the other degeneration.  The two tensors of the degenerations in question then have the form $p\otimes p_1\otimes\cdots\otimes p_m\otimes q$ and $p'\otimes q_1\otimes\cdots\otimes q_m\otimes q'$, where $\sum_{i=1}^m\deg p_i=\sum_{j=1}^m q_j$ by lemma \ref{newdeg1}.  Finally, since $\deg p+\deg p'=\deg q+\deg q'=1$, the antisymmetry of $B$ gives $$B(p\otimes p_1\otimes\cdots\otimes p_m\otimes q)=-B(p'\otimes q_1\otimes\cdots\otimes q_m\otimes q'),$$ and this completes the proof.

\end{proof}

\section{Conclusion}






With an explicit formulation of the Quantum $A_{\infty}$-relations, one may now use the partition function of Barannikov and Kontsevich to compute cohomology classes in $\overline{\mc{M}}_{n,1}$, and the mirror map in \cite{Clay} may now be extended to include the genus 1 elements $\overline{\mu}_{n,1,\s}$.

\end{document}